\documentclass[11pt, leqno, twoside]{article}

\usepackage{amssymb}
\usepackage{amsmath}
\usepackage{amsthm}
\usepackage{amsfonts}
\usepackage{mathrsfs}
\usepackage{indentfirst}
\usepackage{graphicx}
\usepackage{txfonts}

\usepackage{anysize}

\usepackage{tikz}
\usetikzlibrary{positioning}

\usepackage{fancyhdr}
\usepackage{tikz}
\usetikzlibrary{positioning}

\usepackage{color}
\usepackage{setspace}

\allowdisplaybreaks

\usepackage{txfonts}

\pagestyle{myheadings}\markboth{\footnotesize\rm\sc
Long Huang, Yangzhi Zhang and Ciqiang Zhuo}
{\footnotesize\rm\sc Capacitary Muckenhoupt Weight, BMO, BLO, Factorization Theorems and Applications}

\textwidth=15cm
\textheight=23cm
\oddsidemargin 0.46cm
\evensidemargin 0.46cm

\parindent=13pt

\def\rr{{\mathbb R}}
\def\rn{{{\rr}^n}}

\def\zz{{\mathbb Z}}

\def\nn{{\mathbb N}}

\def\cm{{\mathcal M}}

\def\ca{{\mathcal A}}

\def\ch{{\mathcal H}}

\def\fz{\infty}
\def\az{\alpha}

\def\dz{\delta}

\def\lz{\lambda}

\def\lf{\left}
\def\r{\right}

\def\ls{\lesssim}
\def\gs{\gtrsim}

\def\noz{\nonumber}
\def\wz{\widetilde}

\def\loc{{\mathop\mathrm{\,loc\,}}}
\def\esinf{\mathop\mathrm{\,ess\,inf\,}}
\def\esup{\mathop\mathrm{\,ess\,sup\,}}

\def\q1{\wz q}
\def\Q1{q_1}

\def\loc{{\mathop\mathrm{loc}}}

\newtheorem{thm}{Theorem}[section]
\newtheorem{prop}[thm]{Proposition}
\newtheorem{lem}[thm]{Lemma}
\newtheorem{cor}[thm]{Corollary}

\theoremstyle{definition}

\newtheorem{rem}[thm]{Remark}

\numberwithin{equation}{section}

\begin{document}
\title{\bf\Large Capacitary Muckenhoupt Weight, BMO and BLO Spaces with Hausdorff Content, Factorization Theorems and Applications
\footnotetext{\hspace{-0.35cm}
2020 {\it Mathematics Subject Classification}.
{Primary 42B25; Secondary 28A12, 28A25, 31C15.} \endgraf
{\it Key words and phrases.} Capacitary Muckenhoupt weight, BMO, BLO, Hausdorff content, factorization theorem, John--Nirenberg inequality, Calder\'on--Zygmund decomposition. \endgraf}}
\author{Long Huang, Yangzhi Zhang and Ciqiang Zhuo\footnote{Corresponding author, E-mail: cqzhuo87@hunnu.edu.cn
/{\color{red}{January 31, 2026}}
}
}
\date{ }
\maketitle
	
\vspace{-0.7cm}

\begin{center}
\begin{minipage}{13cm}
{\small {\bf Abstract}\quad
Let $\delta\in(0,n]$, $p\in[1,\infty)$, $\mathcal H_{\infty}^\delta$ denote the Hausdorff content on $\mathbb R^n$, and $\mathcal A_{p,\delta}$ be the capacitary Muckenhoupt weight class.
We are interested in understanding the relationship between the capacitary Muckenhoupt weight class
$\mathcal A_{p,\delta}$ and ${\rm{BMO}}(\mathbb R^n, \mathcal H_{\infty}^{\delta})$ or ${\rm{BLO}}(\mathbb R^n, \mathcal H_{\infty}^{\delta})$ spaces for all dimension $\delta\in(0,n]$,
and further to comprehend the structure of these two spaces. 
Our main result shows that $\mathcal A_{p,\delta}$ for $p\in(1,\infty)$ is equivalent to the BMO spaces, while $\mathcal A_{1,\delta}$ is equivalent to the BLO spaces, and consequently yields the factorization theorems for these BMO and BLO spaces via capacitary Hardy--Littlewood maximal operators, which essentially extend main results of Coifman and Rochberg in 1980 beyond measure theory. As applications,
by establishing some capacitary weighted John--Nirenberg inequalities, we obtain the  equivalence between capacitary weighted BMO or BLO spaces and 
${\rm{BMO}}(\mathbb R^n, \mathcal H_{\infty}^{\delta})$ or ${\rm{BLO}}(\mathbb R^n, \mathcal H_{\infty}^{\delta})$ respectively. These results reveal deep connections between $\mathcal A_{p,\delta}$ and BMO or BLO spaces with Hausdorff content, beyond the classical measure-theoretic settings. We develop some approaches in the proofs and using a new observation, that is, the additivity of measures and linearity of integrals are superfluous for the corresponding classical theory.}
\end{minipage}
\end{center}

%\tableofcontents

\vspace{0.2cm}

\section{Introduction}

\overfullrule=5pt

Harmonic analysis on Euclidean spaces endowed with the Lebesgue measure, or with measures absolutely continuous with respect to it, has flourished since the last century. The theory has been well established and now serves as a foundation for many areas of mathematics; see, for instance, the book \cite{swbook} by Stein and Weiss. In recent years, we have been devoted to essentially extending measure-based harmonic analysis on the Euclidean space $\rn$ to the setting of $\rn$ equipped with low-dimensional outer measures, with the aim of understanding the structure of low-dimensional sets in $\rn$, including sets with fractional dimensions such as fractals. At the same time, we investigate which features used in the development of classical harmonic analysis—such as the additivity of measures and the linearity of integrals—are not intrinsic or indispensable. 
Roughly speaking, we employ a finer ``measuring scale", namely Hausdorff content, to quantify the size of sets in $\rn$, and seek to develop the corresponding theory of harmonic analysis. 

The Hausdorff content fails to be absolutely continuous with respect to Lebesgue measures and instead admits both singular and absolutely continuous parts.
Given a set $E\subset \rn$ and $\dz \in (0,n]$,
the \emph{Hausdorff content} $\mathcal{H}_{\infty }^{\delta }$ of $E$ is defined by setting
\begin{align*}
\mathcal{H}_{\infty }^{\delta }\lf (E \r )
\coloneqq \inf\lf \{\sum_i [l(Q_i)]^{\delta }:\
E\subset \bigcup_i Q_i \right \},
\end{align*}
where the infimum is taken over all finite or countable cubes coverings
$\{Q_i\}_{i}$ of $E$ and $l(Q)$ denotes the edge length
of the cube $Q\subset\rn$. Here and thereafter, a cube $Q\subset\rn$ always denotes a left-open and right-closed cube in $\rn$ with side parallel to the coordinate axes. The quantity $\mathcal{H}_{\infty }^{\delta }\lf (E \r )$ is also referred to as the $\delta$-\emph{Hausdorff capacity} or the \emph{Hausdorff content of $E$ of
dimension} $\delta$. The Hausdorff content is closely related to the Hausdorff measure, but perhaps less popular both in geometric measure theory and harmonic analysis. Nevertheless, it offers several advantages: it enjoys greater regularity than 
Hausdorff measure, easier to analyze and still gives the Hausdorff dimension as the critical exponent. For more relationships between Hausdorff content and Hausdorff measure, we refer to \cite{ff15}.

For any subset $E\subset \rn$ and non-negative function $f$, the \emph{Choquet integral} of $f$ on $E$ is defined by setting
\[\int_{E}f(x)\,d\ch_{\fz}^{\dz}\coloneqq\int_{0}^{\fz}\ch_{\fz}^{\dz}(\{x\in E:\ f(x)>t\})\,dt.\]
From the definition, it follows that the Choquet integral is well defined for all non-negative functions, without the assumption of measurability. For any $p\in(0,\fz)$, the \emph{$p$-Choquet
integral with respect to the capacity} of a function $f$ on $E$ is defined by setting
\begin{align*}
\|f\|_{L^p(E,\ch_{\fz}^{\dz})}
\coloneqq\lf[\int_E |f(x)|^p\,d\ch_{\fz}^{\dz}\r]^\frac1p.
\end{align*}
For any $p\in(0,\fz)$ and $E\subset \rn$, we always use $L^p(E,\ch_{\fz}^{\dz})$ to denote
the space of all functions $f$ such that the quasi-norm $\|f\|_{L^p(E,\ch_{\fz}^{\dz})}$ is finite, and denote by $L_{\loc}^1(\rn,\ch_{\fz}^{\dz})$ the set of all functions $f$ satisfying that $f\in L^1(K,\ch_{\fz}^{\dz})$ for any compact set $K\subset\rn$.

The capacitary Muckenhoupt weight class
$\ca_{p,\dz}$, depending not only on the integrable exponent $p$ but also on the dimension $\delta$ of Hausdorff content, was recently introduced and studied by Huang et al. in \cite{0702-4}.
Precisely, if $w\in L_{\loc}^1(\rn,\ch_{\fz}^{\dz})$ and $w\in(0, \fz)$ for $\ch_{\fz}^{\dz}$-almost everywhere, then we say that $w$ is a weight. For any $p\in (1,\fz)$ and $\delta\in(0,n]$, the capacitary Muckenhoupt weight class $\ca_{p,\dz}$ is defined by setting
\[\ca_{p,\dz}\coloneqq\lf\{w: [w]_{\mathcal A_{p,\delta}}\coloneqq\sup_{Q}\lf\{\frac{1}{\mathcal H^{\delta}_\infty(Q)}\int_Qw(x)\,d\mathcal H^{\delta}_\infty\r\}\lf\{\frac{1}{\mathcal H^{\delta}_\infty(Q)}\int_Qw(x)^{-\frac{1}{p-1}}\,d\mathcal H^{\delta}_\infty\r\}^{p-1}< \fz\r\}\]
with the supremum being taken over all cubes $Q\subset \rn$,
and
\[\ca_{1,\dz}\coloneqq\lf\{w: [w]_{\ca_{1,\delta}}\coloneqq\inf\lf\{c\in(0,\fz):\ \cm_{\ch_\fz^\delta}w(x)\leq cw (x)\ {\rm for}\ \ch_\fz^\delta{\rm-a.e.}~x\in\rn\r\}<\fz\r\}.\]
Here and thereafter, $\mathcal M_{\mathcal H_{\infty}^\delta}$ is the capacitary Hardy--Littlewood maximal operator associated with Hausdorff contents defined by
$$\mathcal M_{{\mathcal H}_\infty^\delta}f(x)\coloneqq\sup_{Q\ni x}\frac1{{\mathcal H}_\infty^\delta(Q)}\int_Q |f(y)|\,d{\mathcal H}_\infty^\delta$$
with the supremum taken over all cubes $Q\subset \rn$ containing $x$ and $f\in L_{\loc}^1(\rn,{\mathcal H^{\delta}_\infty})$.
This weight class $\ca_{p,\dz}$ revisits the  Muckenhoupt weight class $A_p$ when dimension $\delta=n$, and $\ca_{p,\dz}\subsetneqq A_p$ for dimension $\delta\in(0,n)$ and $p\in[1,\fz)$. The analogues of Muckenhoupt's theorem, reverse H\"older inequality, self-improving property and Jones' factorization theorem were built under Choquet integral setting in \cite{0702-4}. Consequently, the classical Muckenhoupt $A_p$-weight theory was established beyond measure-theortic frameworks.

We now summarize the main theorems of this paper in the diagrams below, and then proceed to present the details.

\iffalse
$$\boxed{  \lf\|\cm_{\ch_\fz^\delta}f\r\|_{L^{p}_{ w}(\rn, \ch_{\fz}^{\dz})}\ls \|f\|_{L^{p}_{ w}(\rn, \ch_{\fz}^{\dz})} }\iff\boxed{\mathcal A_{p,\delta} }\iff
\boxed{{\rm BMO}(\mathbb R^n, \mathcal H_{\infty}^{\delta})}\iff
\boxed{{\rm BMO}_w^q(\mathbb R^n, \mathcal H_{\infty}^{\delta})}
$$

$$
\boxed{  \lf\|\cm_{\ch_\fz^\dz}f\r\|_{L^{p}_{ w}(\rn, \ch_{\fz}^{\dz})}\ls \|f\|_{L^{p}_{ w}(\rn, \ch_{\fz}^{\dz})} }
\stackrel{\text{Thm 1}}{\iff}
\boxed{\mathcal A_{p,\dz} }
\stackrel{\text{Thm 2}}{\iff}
\boxed{{\rm BMO}(\mathbb R^n, \mathcal H_{\infty}^{\dz})}
\stackrel{\text{Thm 3}}{\iff}
\boxed{{\rm BMO}_w^q(\mathbb R^n, \mathcal H_{\infty}^{\dz})}
$$

\vspace{-0.3cm}
$$\forall 1<p<\fz~~~\text{and}~~~0<\delta\le n$$
$$ 
\boxed{\lf\|\cm_{\ch_\fz^\delta}f\r\|_{L^{1,\fz}_{ w}(\rn, \ch_{\fz}^{\dz})}\ls\|f\|_{L^{1}_{w}(\rn, \ch_{\fz}^{\dz})}}\iff \boxed{ \mathcal A_{1,\delta}} \iff\boxed{
{\rm BLO}(\mathbb R^n, \mathcal H_{\infty}^{\delta})}\iff\boxed{
{\rm BLO}_w^q(\mathbb R^n, \mathcal H_{\infty}^{\delta})}
$$
\vspace{-0.2cm}
$$p=1~~~\text{and}~~~0<\delta\le n,\quad\quad{\ca_{p,\dz}\subsetneqq \ca_{p,\,n} \sim A_p,\quad 0<\delta<n.}$$

\fi

\begin{figure}[htbp]
\centering
\begin{tikzpicture}[
    node distance=1.8cm,
    box/.style={draw, rectangle, rounded corners, minimum height=1cm, minimum width=1.2cm, align=center},
    thm/.style={font=\small, midway, above}
]

\node[box] (Mf) {$ \lf\|\cm_{\ch_\fz^\dz}f\r\|_{L^{p}_{ w}(\rn, \ch_{\fz}^{\dz})}\ls \|f\|_{L^{p}_{ w}(\rn, \ch_{\fz}^{\dz})}$};
\node[box, right=2.9cm of Mf] (Aps) {$\mathcal A_{p,\dz}$};
\node[box, right=1.9cm of Aps] (BMO) {${\rm BMO}(\mathbb R^n, \mathcal H_{\infty}^{\dz})$};
\node[box, below=1.5cm of BMO] (BMOW) {${\rm BMO}_w^q(\mathbb R^n, \mathcal H_{\infty}^{\dz})$};
\node[box, left=of BMOW, minimum width=3.7cm] (ABCDE) {Theorem \ref{cor-0721-1}};
\node[box, below=1.5cm of Mf] (wMf) {$ \lf\|\cm_{\ch_\fz^\delta}f\r\|_{L^{p,\fz}_{ w}(\rn, \ch_{\fz}^{\dz})}\ls\|f\|_{L^{p}_{w}(\rn, \ch_{\fz}^{\dz})}$};

\draw[<->, double, scale=0.8, >=stealth] (Mf) -- (Aps) node[midway, above] {\cite[Theorem 1.1]{0702-4}}
node[midway, below] {$1<p<\fz$};
\draw[<->, double, scale=0.8, >=stealth] (Aps) -- (BMO) node[thm] {Theorem \ref{tm-0628-1}}
node[midway, below] {$1<p<\fz$};
\draw[<->, double, scale=0.8, >=stealth] (BMO) -- (BMOW) node[thm, left, align=center] {Theorem \ref{them-0629-1}\\
$0<q<\fz$}
node[thm, right, align=center] {$w\in \mathcal A_{p,\dz}$\\
$1\le p<\fz$};
\draw[<->, double, scale=0.8, >=stealth] (Mf) -- (wMf) 
node[thm, left, align=center] {\cite[Theorem 1.1]{0702-4}\\
$1< p<\fz$};
\draw[<-,  scale=0.8, >=stealth] (ABCDE) -- (BMO) node[thm, left] {Factorization};
\end{tikzpicture}
\caption{Equivalence of $\mathcal A_{p,\delta}$ and ${\rm BMO}(\mathbb R^n, \mathcal H_{\infty}^{\delta})$ for all dimension $\delta\in(0,n]$.}
\end{figure}

\begin{figure}[htbp]
\centering
\begin{tikzpicture}[
    node distance=1.8cm,
    box/.style={draw, rectangle, rounded corners, minimum height=1cm, minimum width=1.2cm, align=center},
    thm/.style={font=\small, midway, above}
]

\node[box] (Mf) {$ \lf\|\cm_{\ch_\fz^\delta}f\r\|_{L^{1,\fz}_{ w}(\rn, \ch_{\fz}^{\dz})}\ls\|f\|_{L^{1}_{w}(\rn, \ch_{\fz}^{\dz})}$};
\node[box, right=2.85cm of Mf] (Aps) {$\mathcal A_{1,\dz}$};
\node[box, right=1.83cm of Aps] (BMO) {${\rm BLO}(\mathbb R^n, \mathcal H_{\infty}^{\dz})$};
\node[box, below=1.4cm of BMO] (BMOW) {${\rm BLO}_w^q(\mathbb R^n, \mathcal H_{\infty}^{\dz})$};
\node[box, left=of BMOW, minimum width=4.5cm] (ABCDE) {Theorem \ref{cor-0721-2}};

\draw[<->, double, scale=0.8, >=stealth] (Mf) -- (Aps) node[midway, above] {\cite[Theorem 1.2]{0702-4}};
\draw[<->, double, scale=0.8, >=stealth] (Aps) -- (BMO) node[thm] {Theorem \ref{tm-0720-1}};
\draw[<->, double, scale=0.8, >=stealth] (BMO) -- (BMOW) 
node[thm, left, align=center] {Theorem \ref{them-127-2}\\
$0<q<\fz$}
node[thm, right, align=center] {$w\in \mathcal A_{p,\dz}$\\
$1\le p<\fz$};
\draw[<-, scale=0.8, >=stealth] (ABCDE) -- (BMO) node[thm, left] {Factorization};
\end{tikzpicture}
\caption{Equivalence of $\mathcal A_{1,\delta}$ and ${\rm BLO}(\mathbb R^n, \mathcal H_{\infty}^{\delta})$ for all dimension $\delta\in(0,n]$.}
\end{figure}
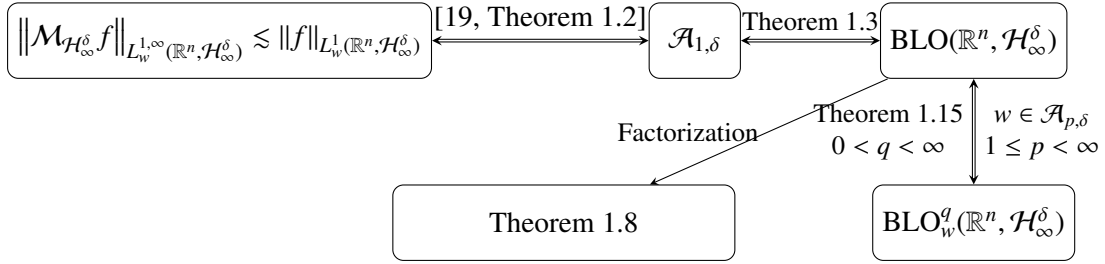

To clarify the motivation driving this paper, we now revisit some history. Recall that, to investigate the regularity of solutions to nonlinear partial differential equations that arise in the study of minimal surfaces, John and Nirenberg \cite{06-30-1} first introduced the BMO (bounded mean oscillation) spaces, in which they established the well-known John--Nirenberg inequality. To be precise, a locally integrable function $f\in\rm{BMO}$ if it satisfies the following condition:
\[\|f\|_{\rm{BMO}}\coloneqq\sup_{Q}\frac{1}{|Q|}\int_Q|f(x)-f_Q|\,dx< \fz,\]
where $f_Q\coloneqq\frac{1}{|Q|}\int_Qf(x)\,dx$ and $Q$ is a cube in $\rn$. Subsequently, many works were inspired by the contributions of John--Nirenberg \cite{06-30-1}. For instance,
Nazarov et al. \cite{0703-3,ntv02} introduced a new BMO space and proved $Tb$-theorem on non-homogeneous spaces; Tolsa \cite{0703-4} investigated BMO and $H^1$ theory under nondoubling measures;
Duong and Yan \cite{dy051,dy052} introduced and developed BMO spaces associated with operators.

In particular, Coifman and Rochberg in their seminal work \cite{06-30-2} established a fundamental relationship between BMO, Muckenhoupt $A_p$ classes and the Hardy–Littlewood maximal operator. In that work, they also introduced the BLO (bounded lower oscillation) space and proved a factorization theorem for BLO involving the $A_1$ class and maximal operators. Recall that a locally integrable function $f\in\rm{BLO}$ if it satisfies
\[\|f\|_{\rm{BLO}}\coloneqq\sup_{Q}\frac{1}{|Q|}\int_Qf(x)-\esinf_{y\in Q}f(y)\,dx< \fz.\]
The space $\rm{BLO}$ is a proper subspace of ${\rm BMO}$ and it is easy to verify that, for any $f\in \rm{BLO}$, $$\|f\|_{\rm{BMO}}\le 2\|f\|_{\rm{BLO}}.$$
Later, Tang \cite{t07} introduced the spaces ${\rm BLO}_{\mathcal{F}}$ based on sections $\mathcal F$ which is connected with the analysis of the Monge--Amp\'ere equation; Nakai and Sadasue \cite{ns17} studied some properties concerning BLO martingales.

$\rm{BMO}$ and $\rm{BLO}$ spaces have profound connections with other branches in harmonic analysis, such as the duality theorem, ${\rm BMO}=(H^1)^*$, with the Hardy space $H^1$ due to Fefferman and Stein \cite{f71,fs72}, and their equivalent characterizations in terms of Muckenhoupt $A_p$ weights. Indeed, John--Nirenberg \cite{06-30-1} (see also the book \cite[Chapter IV]{gr85}) and Coifman--Rochberg \cite{06-30-2} established the following two precise characterizations respectively:
$${\rm{BMO}}=
\{\alpha \ln w: \alpha\ge 0 \ {\text{and}}\  w\in A_p\},~~~\forall p\in(1,\fz)$$
and
$${\rm{BLO}}=\{\alpha \ln w: \alpha\ge 0 \ {\text{and}}\  w\in A_1\}.$$
Here and thereafter, $\ln$ represents the logarithmic function with base $e$, and the \emph{symbol} $A_p$ always denotes the class of  $w\in L_{\loc}^1(\rn)$ (the set of all locally integrable functions on
${{\mathbb R}^n}$) with $w\in(0,\fz)$ almost everywhere such that
\[[w]_{A_p}\coloneqq\sup_Q\lf[\frac{1}{|Q|}\int_Q w (x)\,dx\r]\lf[\frac{1}{|Q|}\int_Q w(x)^{-\frac{1}{p-1}}\,dx\r]^{p-1}<\fz,\quad \forall p\in(1,\fz)\]
with the supremum being taken over all cubes $Q$ and
$$ [w]_{A_1}\coloneqq\esup_{y\in\rn}\frac{\cm w (y)}{w(y)}<\fz,$$
where $\cm$ is the Hardy--Littlewood maximal operator defined by setting
\[\mathcal M(f)(x)\coloneqq\sup_{Q\ni x}\frac1{|Q|}{\int_Q|f(y)|\,dy}
,\quad \forall\,x\in\rn\ \text{and}\ f\in L_{\loc}^1(\rn).\]

Motivated by \cite{06-30-1,06-30-2,0702-4}, we are interested in understanding the relationship between 
$\ca_{p,\dz}$ weights and ${{\rm {BMO}}}(\rn, \ch_{\fz}^{\dz})$ or ${{\rm {BLO}}}(\rn, \ch_{\fz}^{\dz})$ spaces with Hausdorff contents $\ch_{\fz}^{\dz}$ for all  dimension $\delta\in(0,n]$, and consequently to clarify the structure of ${{\rm {BMO}}}(\rn, \ch_{\fz}^{\dz})$ and ${{\rm {BLO}}}(\rn, \ch_{\fz}^{\dz})$. Our main result shows the equivalence between the capacitary Muckenhoupt weights and BMO and BLO spaces with Hausdorff contents, and we further derive factorization theorems for these BMO and BLO spaces.
Recall that the  \emph{space ${{\rm {BMO}}}(\rn, \ch_{\fz}^{\dz})$, with $\delta\in(0,n]$, of functions of bounded $\delta$-dimensional mean oscillation} in $\rn$
is defined by setting
\[{{\rm {BMO}}}(\rn, \ch_{\fz}^{\dz})\coloneqq\lf\{f\in L^1_{\loc}(\rn, \ch_{\infty}^{\delta}):\ \|f\|_{{{\rm {BMO}}}(\rn, \ch_{\fz}^{\dz})}< \fz\r\}\]
with
\[\|f\|_{{{\rm {BMO}}}(\rn, \ch_{\fz}^{\dz})}\coloneqq\sup_{Q}\inf_{c\in \mathbb R}\frac{1}{\ch_{\infty}^{\delta}(Q)}\int_{Q}|f(x)-c|\,d\ch^{\dz}_{\fz},\]
where the supremum is taken over all finite cubes $Q\subset \rn$. Chen and Specter introduced this space in \cite{0623-1} and derived an analogue of the John--Nirenberg inequality for $\rm{BMO}(\rn, \ch_{\fz}^{\dz})$. We also refer to Basak et al. \cite{bcrs25} for the John--Nirenberg inequality concerning translation invariant Hausdorff contents. Additionally, we introduce the \emph{space ${{\rm {BLO}}}(\rn, \ch_{\fz}^{\dz})$ of functions of bounded $\delta$-dimensional lower oscillation} in $\rn$ by setting
\[{{\rm {BLO}}}(\rn, \ch_{\fz}^{\dz})\coloneqq\lf\{f\in L^1_{\loc}(\rn, \ch_{\infty}^{\delta}):\ \|f\|_{{\rm {BLO}}(\rn, \ch_{\fz}^{\dz})}< \fz\r\}\]
with
\[\|f\|_{{{\rm {BLO}}}(\rn, \ch_{\fz}^{\dz})}\coloneqq\sup_{Q}\frac{1}{\ch_{\infty}^{\delta}(Q)}\int_{Q}|f(x)-\esinf_{y\in Q}f(y)|\,d\ch^{\dz}_{\fz},\]
where the supremum is taken over all finite cubes $Q\subset \rn$ and
$$\esinf_{y\in Q} f(y)\coloneqq\inf\lf\{{t\in(0,\fz):\ \mathcal H^{\delta}_\infty}\lf(\{x\in Q: f (x) <t\}\r)>0\r\}.$$

When $\delta=n$ and limiting to Lebesgue measurable functions, the spaces ${{\rm {BMO}}}(\rn, \ch_{\fz}^{\dz})$ and ${{\rm {BLO}}}(\rn, \ch_{\fz}^{\dz})$
go back to the BMO space of John--Nirenberg  \cite{06-30-1} and to the BLO space of Coifman--Rochberg \cite{06-30-2} respectively. This is due to the fact that the Hausdorff content $\ch_\fz^n$
is equivalent to the Lebesgue measure on measurable subsets of $\rn$, namely, there exist positive constants $K_1(n)$ and $K_2(n)$ such that,
for any measurable subset $E\subset \rn$,
$$K_1(n)\ch_\fz^n(E)\le |E|\le K_2(n)\ch_\fz^n(E),$$
which is essential obtained by the equivalence between the $n$-Hausdorff measure and the Lebesgue measure;
see Evans and Gariepy \cite[Chapter 2.2]{eg15} for the detail.

Here the spaces ${{{\rm {BMO}}}(\rn, \ch_{\fz}^{\dz})}$ and ${{{\rm {BLO}}}(\rn, \ch_{\fz}^{\dz})}$ will be regarded as spaces of functions modulo constants as usual. We point out that ${{{\rm {BLO}}}(\rn, \ch_{\fz}^{\dz})}$ is not a vector space and $\|\cdot\|_{{{\rm {BLO}}}(\rn, \ch_{\fz}^{\dz})}$ is not  a norm since it lacks homogeneity for negative constants. But following  Coifman--Rochberg \cite{06-30-2}, we still call them  space and norm. 

In what follows, since the norm appears in the denominator, we always assume that $f\in {\rm BMO}$ or BLO has nonzero norm. In fact, the conclusion is trivial when the norm of $f$ is zero. With these preparations, we now state the first main result as follows.

\begin{thm}\label{tm-0628-1}
Let $\delta\in(0,n]$ and $p\in(1, \fz)$. Then
$f\in {\rm BMO}(\mathbb R^n, \mathcal H_{\infty}^{\delta})$ if and only if $e^{\alpha f}\in \mathcal A_{p,\delta}$  for some non-negative constant $\alpha$, that is,
\begin{enumerate}
\item[{\rm(i)}]
if $w\in \ca_{p,\dz}$, then $\ln w\in {\rm {BMO}}(\rn, \ch_{\fz}^{\dz})$ and there exists a positive constant $C=C(n,\dz, p)$ such that
\begin{equation}\label{eq-0628-1}
\|\ln w\|_{{\rm {BMO}}(\rn, \ch_{\fz}^{\dz})}\le C[w]^{\min\{1,\frac{1}{p-1}\}}_{\ca_{p,\dz}};
\end{equation}
\item[{\rm(ii)}]
conversely, if $f\in {\rm {BMO}}(\rn, \ch_{\fz}^{\dz})$, then there exist positive constants $c_1=c_1(n,\dz, p)$ and $c_2=c_2(n,\dz, p)$ such that
\begin{equation}\label{eq-0628-3}
{\rm{exp}}\lf({\frac{c_1f}{\|f\|_{{\rm {BMO}}(\rn, \ch_{\fz}^{\dz})}}}\r)\in \ca_{p,\dz}\quad and \quad \lf[{\rm{exp}}\lf({\frac{c_1f}{\|f\|_{{\rm {BMO}}(\rn, \ch_{\fz}^{\dz})}}}\r)\r]_{\ca_{p,\dz}}\leq c_2,
\end{equation}
that is, the weight constant has a uniform upper bound with respect to $f\in {\rm {BMO}}(\rn, \ch_{\fz}^{\dz})$ and ${\|f\|_{{\rm {BMO}}(\rn, \ch_{\fz}^{\dz})}}$.
\end{enumerate}
\end{thm}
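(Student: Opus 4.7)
The plan is to handle the two implications separately, following the classical John--Nirenberg--Coifman--Rochberg route but replacing each Lebesgue-measure tool with its Choquet-integral counterpart: the layer-cake representation, Markov's inequality for Choquet integrals, countable subadditivity of $\ch_\fz^\dz$, and positive homogeneity of the Choquet integral. Together these substitutes sidestep the absence of linearity, Fubini and the classical Jensen inequality in the Hausdorff content setting.

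For (i), I would fix a cube $Q\subset\rn$ and take $c_Q:=\ln w_Q$ with $w_Q:=\frac{1}{\ch_\fz^\dz(Q)}\int_Q w\,d\ch_\fz^\dz$, and then prove a distributional estimate for $\ln w - c_Q$ on $Q$. Markov's inequality, which is immediate from the layer-cake formula, applied to $w$ gives $\ch_\fz^\dz(\{x\in Q:w(x)>e^t w_Q\})\le e^{-t}\ch_\fz^\dz(Q)$, and this controls the upper tail $\{\ln w - c_Q > t\}$. For the lower tail, writing $\{x\in Q:w(x)<e^{-t}w_Q\}=\{x\in Q:\sigma(x)>e^{t/(p-1)}w_Q^{-1/(p-1)}\}$ with $\sigma:=w^{-1/(p-1)}$, Markov applied to $\sigma$ combined with the $\ca_{p,\dz}$ condition $w_Q\sigma_Q^{p-1}\le[w]_{\ca_{p,\dz}}$ yields $\ch_\fz^\dz(\{\ln w - c_Q<-t\})\le[w]_{\ca_{p,\dz}}^{1/(p-1)}e^{-t/(p-1)}\ch_\fz^\dz(Q)$. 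Combining the two tails via subadditivity of $\ch_\fz^\dz$ and integrating back through the layer-cake formula produces $\frac{1}{\ch_\fz^\dz(Q)}\int_Q|\ln w - c_Q|\,d\ch_\fz^\dz\ls 1+(p-1)[w]_{\ca_{p,\dz}}^{1/(p-1)}$, which (using $[w]_{\ca_{p,\dz}}\ge 1$) is controlled by $[w]_{\ca_{p,\dz}}^{\max\{1,1/(p-1)\}}$ and gives \eqref{eq-0628-1}.

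For (ii), I would show that the choice $\alpha:=c_0\min\{1,p-1\}/(2\|f\|_{{\rm BMO}(\rn,\ch_\fz^\dz)})$, with $c_0$ the constant in \eqref{eq-0702-1}, makes $e^{\alpha f}$ satisfy the $\ca_{p,\dz}$ condition. By positive homogeneity of the Choquet integral, $\int_Q e^{\alpha f}\,d\ch_\fz^\dz = e^{\alpha f_Q}\int_Q e^{\alpha(f - f_Q)}\,d\ch_\fz^\dz$, and analogously for the exponent $-\alpha/(p-1)$; when these are multiplied to form the $\ca_{p,\dz}$ product, the $e^{\pm\alpha f_Q}$ factors cancel. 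It therefore suffices to bound each normalized average $\frac{1}{\ch_\fz^\dz(Q)}\int_Q e^{\beta|f-f_Q|}\,d\ch_\fz^\dz$ uniformly in $Q$ for $\beta\in\{\alpha,\alpha/(p-1)\}$. Via layer-cake and \eqref{eq-0702-1}, this reduces to convergence of $\int_1^\infty s^{-c_0/(\beta\|f\|_{{\rm BMO}(\rn,\ch_\fz^\dz)})}\,ds$, which holds for our $\alpha$ since the exponent then satisfies $c_0/(\beta\|f\|_{{\rm BMO}(\rn,\ch_\fz^\dz)})\ge 2$ for both choices of $\beta$.

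The main conceptual obstacle is that the classical proof of (i) rests on the Jensen-type step $(\ln w)_Q\le\ln w_Q$ and on additively splitting logarithmic terms, both unavailable in the nonlinear Choquet setting. The distributional route through Markov applied separately to $w$ and to $\sigma = w^{-1/(p-1)}$ replaces that Jensen step cleanly, with the $\ca_{p,\dz}$ condition entering exactly once, when we convert the $\sigma$-Markov bound on the lower tail into exponential decay. A secondary point to verify in (ii) is that $f_Q$ is well-defined and finite for signed $f\in L^1_{\loc}(\rn,\ch_\fz^\dz)$; this follows by splitting $f = f^+ - f^-$ and using $\ch_\fz^\dz$-quasicontinuity on each piece so that only the non-negative Choquet integral is invoked.
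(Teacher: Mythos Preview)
Part (ii) of your proposal follows the paper's route almost exactly (cf.\ the computation \eqref{eq-0628-x}; the paper first treats $p=2$ and then extends via Lemma~\ref{lm0704-1}, whereas you handle all $p$ at once, but the mechanism is identical). One minor remark: you need not define $f_Q$ as an integral average at all; the minimizing constant $C_Q$ of Lemma~\ref{lm-0522-17} already serves, so your final sentence about splitting $f=f^+-f^-$ is superfluous.

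For part (i), however, your argument is genuinely different and more elementary than the paper's. The paper invests in a signed integral average $f_{Q,\dz}$ (Section~\ref{2.1}), proves Jensen-type inequalities for it (Lemma~\ref{lm-0708-1}, Corollary~\ref{cor-0708-1}), and then passes from exponential bounds to the BMO norm via $|t|<e^t+e^{-t}$ combined with Theorem~\ref{pop-0708-1}. You bypass all of this: taking $c_Q=\ln w_Q$ and applying Markov's inequality separately to $w$ and to $\sigma=w^{-1/(p-1)}$ yields exponential tail bounds directly, the $\ca_{p,\dz}$ condition entering only through the single step $\sigma_Q\le[w]_{\ca_{p,\dz}}^{1/(p-1)}w_Q^{-1/(p-1)}$. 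The only facts about the Choquet integral you use are monotonicity, subadditivity of $\ch_\fz^\dz$, and the layer-cake formula, all immediate from the definition. What the paper's machinery buys is the independent characterization Theorem~\ref{pop-0708-1}, which has value elsewhere; for Theorem~\ref{tm-0628-1}(i) in isolation, your route is shorter. One small gap to close: the last step $1+(p-1)[w]^{1/(p-1)}\ls[w]^{\max\{1,1/(p-1)\}}$ requires $[w]_{\ca_{p,\dz}}\gs 1$, which follows from H\"older for $\widetilde{\ch}_\fz^\dz$ (Remark~\ref{12-4}(ii)) together with the equivalence \eqref{eq0723a}.
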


\begin{rem}
We point out that Theorem \ref{tm-0628-1} returns to the classical Muckenhoupt $A_p$ weight characterization of BMO spaces when $\delta=n$, namely, $${\rm{BMO}}=
\{\alpha \ln w: \alpha\ge 0 \ {\text{and}}\  w\in A_p\},~~~\forall p\in(1,\fz);$$ see Garc\'ia-Cuerva and Rubio de Francia \cite[Chapter IV, p.\,409]{gr85} or Coifman, Rochberg and Weiss \cite{crw76}. But when dimension $\delta\in(0,n)$, the approach in the proof is new. Indeed, the classical approaches are no longer applicable in this case, primarily due to the nonlinearity of Choquet integrals, involved in the capacitary weight and function spaces, in addition to the definition of ${{\rm {BMO}}(\rn, \ch_{\fz}^{\dz})}$.
\end{rem}

Theorem \ref{tm-0628-1} says that capacitary Muckenhoupt weight class $\ca_{p,\dz}$ characterizes ${\rm BMO}(\rn,\ch_\fz^\delta)$ for $p\in(1,\fz)$.
However, $\ca_{1,\dz}$ is unable to characterize ${\rm BMO}(\rn,\ch_\fz^\delta)$.
By establishing an analogue of John--Nirenberg inequality for ${\rm{BLO}}(\rn,\ch_{\fz}^{\dz})$, we then build an equivalent characterization of ${\rm BLO}(\rn,\ch_\fz^\delta)$ via $\ca_{1,\dz}$ as follows.

\begin{thm}\label{tm-0720-1}
Let $\delta\in(0,n]$. Then $f\in {\rm BLO}(\mathbb R^n, \mathcal H_{\infty}^{\delta})$ if and only if $e^{\beta f}\in \mathcal A_{1,\delta}$ for some non-negative $\beta$, that is,
\begin{enumerate}
\item[{\rm(i)}]
if $w\in \ca_{1,\dz}$, then $\ln w\in {{\rm {BLO}}}(\rn, \ch_{\fz}^{\dz})$ and  there exists positive constant $C=C(n, \dz)$ such that
\begin{equation}\label{eq-0628-4}
\|\ln w\|_{{{\rm {BLO}}}(\rn, \ch_{\fz}^{\dz})}\le \ln C+ \ln [w]_{\ca_{1,\dz}};
\end{equation}
\item[{\rm(ii)}] conversely,
if $f\in {{\rm {BLO}}}(\rn, \ch_{\fz}^{\dz})$, then there exist exist positive constants $c_1=c_1(n,\dz)$ and $c_2=c_2(n,\dz)$ such that
\begin{equation}\label{eq-0628-5}
{\rm{exp}}\lf({\frac{c_1f}{\|f\|_{{{\rm {BLO}}}(\rn, \ch_{\fz}^{\dz})}}}\r)\in \ca_{1,\dz}\quad and\quad \lf[{\rm{exp}}\lf({\frac{c_1f}{\|f\|_{{{\rm {BLO}}}(\rn, \ch_{\fz}^{\dz})}}}\r)\r]_{\ca_{1,\dz}}\leq c_2,
\end{equation}
that is, the weight constant has a uniform upper bound with respect to $f\in {{\rm {BLO}}}(\rn, \ch_{\fz}^{\dz})$ and ${\|f\|_{{\rm {BLO}}(\rn, \ch_{\fz}^{\dz})}}$.
\end{enumerate}
\end{thm}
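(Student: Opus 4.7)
My plan is to prove the two implications in turn, both built on the layer-cake representation of the Choquet integral. For direction (i) I will combine the defining inequality of $\ca_{1,\dz}$ with a Chebyshev-type estimate; for direction (ii) I will first establish a new John--Nirenberg-type decay estimate for ${\rm BLO}(\rn,\ch_\fz^\dz)$ analogous to \eqref{eq-0702-1}, and then integrate it against an exponential to produce the $\ca_{1,\dz}$ condition.

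For (i), I would fix a cube $Q$ and set $m_Q:=\esinf_{x\in Q}\ln w(x)=\ln(\esinf_{x\in Q}w(x))$, so that $\ln w-m_Q\ge 0$ on $Q$. The layer-cake identity gives
\[
\int_Q(\ln w-m_Q)\,d\ch_\fz^\dz=\int_0^\fz \ch_\fz^\dz\lf(\lf\{x\in Q:\,w(x)>e^{t+m_Q}\r\}\r)dt.
\]
I would then split this at $t_0:=\ln [w]_{\ca_{1,\dz}}$: on $[0,t_0]$ the trivial bound $\ch_\fz^\dz(\{\cdots\})\le \ch_\fz^\dz(Q)$ contributes $t_0\ch_\fz^\dz(Q)$, while on $(t_0,\fz)$ the Chebyshev inequality for Choquet integrals (immediate from the layer-cake identity), together with $\int_Q w\,d\ch_\fz^\dz\le [w]_{\ca_{1,\dz}}e^{m_Q}\ch_\fz^\dz(Q)$ from the $\ca_{1,\dz}$ definition, gives
\[
\ch_\fz^\dz\lf(\lf\{x\in Q:\,w(x)>e^{t+m_Q}\r\}\r)\le [w]_{\ca_{1,\dz}}e^{-t}\ch_\fz^\dz(Q),
\]
whose integral over $(t_0,\fz)$ equals exactly $\ch_\fz^\dz(Q)$. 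Summing the two pieces yields \eqref{eq-0628-4} with $C=e$.

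For (ii), the crucial ingredient I need is a John--Nirenberg inequality for ${\rm BLO}(\rn,\ch_\fz^\dz)$: there exist positive constants $c_1,C_1$ such that, for every cube $Q\subset \rn$, every $f\in{\rm BLO}(\rn,\ch_\fz^\dz)$, and every $t>0$,
\[
\ch_\fz^\dz\lf(\lf\{x\in Q:\,f(x)-\esinf_{y\in Q}f(y)>t\r\}\r)\le C_1\ch_\fz^\dz(Q)\exp\lf(-\frac{c_1 t}{\|f\|_{{\rm BLO}(\rn,\ch_\fz^\dz)}}\r).
\]
Granted this, I would pick $c\in(0,c_1)$, set $g:=c(f-\esinf_{y\in Q}f(y))/\|f\|_{{\rm BLO}(\rn,\ch_\fz^\dz)}\ge 0$ on $Q$, and apply the layer-cake identity
\[
\int_Q e^g\,d\ch_\fz^\dz=\ch_\fz^\dz(Q)+\int_0^\fz e^s\ch_\fz^\dz\lf(\lf\{x\in Q:\,g(x)>s\r\}\r)ds.
\]
Inserting the BLO John--Nirenberg bound renders the tail integral convergent and yields $\int_Q e^g\,d\ch_\fz^\dz\le C_2\ch_\fz^\dz(Q)$ for some $C_2$ depending only on $c,c_1,C_1$. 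Positive homogeneity of the Choquet integral under multiplication by the positive constant $\exp(c\esinf_{y\in Q}f(y)/\|f\|_{{\rm BLO}(\rn,\ch_\fz^\dz)})$ then converts this estimate into \eqref{eq-0628-5}.

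The principal obstacle will be establishing the above BLO John--Nirenberg inequality in the Hausdorff content setting. The classical Lebesgue-measure proof in \cite{07-02-1} is a Calder\'on--Zygmund stopping-time argument that relies on linearity and countable additivity of the Lebesgue integral, neither of which is available here since $\ch_\fz^\dz$ is only subadditive and the Choquet integral is nonlinear. My plan is to adapt the Calder\'on--Zygmund decomposition to the Choquet integral setting, exploiting the one-sided nonnegativity $f-\esinf_Q f\ge 0$ and performing a dyadic stopping at a level $\alpha\|f\|_{{\rm BLO}(\rn,\ch_\fz^\dz)}$ for a suitably chosen $\alpha>1$; tracking the subadditivity of $\ch_\fz^\dz$ across the (possibly overlapping) selected cubes as the essential infimum propagates from generation to generation is the delicate point of the argument.
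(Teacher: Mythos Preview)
Your proposal is correct. For part (ii) your approach coincides with the paper's: the paper states the ${\rm BLO}(\rn,\ch_\fz^\dz)$ John--Nirenberg inequality as Proposition~\ref{them-0627-1} (deferring the proof to \cite{bcr25}), then integrates it exactly as you describe, invoking the identity in Remark~\ref{12-4}(iv) in place of your layer-cake formulation---these are the same computation. Your anticipation of the Calder\'on--Zygmund stopping-time argument is also on target; the specific device that resolves the ``delicate point'' you flag is a packing/selection lemma (Lemma~\ref{lm-0919-2} here, applied with $w\equiv1$) that extracts from the stopping cubes a subfamily over which the sum of contents is controlled by a single Choquet integral, compensating for the failure of additivity.

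For part (i), however, your route is genuinely different. The paper first observes that the $\ca_{1,\dz}$ condition gives
\[
\frac{1}{\ch_\fz^\dz(Q)}\int_Q e^{\ln w-\esinf_Q\ln w}\,d\ch_\fz^\dz\le [w]_{\ca_{1,\dz}},
\]
and then applies the Jensen-type inequality of Remark~\ref{rem0704-1}(ii) (built on Lemma~\ref{lm-0708-1}) in the dyadic content $\widetilde{\ch}_\fz^\dz$ to pass from the exponential average to the average of $\ln w-\esinf_Q\ln w$, picking up the constant $\ln K(n,\dz)$ from \eqref{eq0723a}. Your layer-cake plus Chebyshev argument bypasses the Jensen machinery entirely, stays in $\ch_\fz^\dz$ throughout, and yields the explicit constant $C=e$. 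It is more elementary and self-contained for this direction; the paper's approach has the advantage of reusing infrastructure (the Jensen-type inequalities) already developed for the ${\rm BMO}$ characterization in Theorem~\ref{tm-0628-1}.
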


\begin{rem}
We point out that Theorem \ref{tm-0720-1} revisits the classical Muckenhoupt $A_1$-weight characterization of BLO spaces when $\delta=n$, namely, $${\rm{BLO}}=\{\alpha \ln w: \alpha\ge 0 \ {\text{and}}\  w\in A_1\};$$ see Coifman--Rochberg \cite{06-30-2}. But when dimension $\delta\in(0,n)$, the classical methods are no longer applicable and we use new method and technique.
\end{rem}

\begin{rem}
We point out that ${\rm BLO}(\mathbb R^n, \mathcal H_{\infty}^{\delta})$ is properly embedded to ${\rm BMO}(\mathbb R^n, \mathcal H_{\infty}^{\delta})$. Indeed, we show the following embeddings in Proposition \ref{lm-0722-1},
$$L^{\fz}(\rn, \ch_{\fz}^{\dz})\subsetneqq {\rm{BLO}}(\rn, \ch_{\fz}^{\dz})\subsetneqq {\rm{BMO}}(\rn, \ch_{\fz}^{\dz}),\quad 0<\delta\le n.$$
Here and thereafter, $L^{\fz}(E, \ch_\fz^\delta)$ always denotes the space of all functions $f$ on $E$ such that
$$\|f\|_{L^\infty(E,\ch_\fz^\delta)}\coloneqq\inf\lf\{t\in(0,\fz): \ch_\fz^\delta(\{x\in E:\ |f(x)|>t\})=0\r\}<\fz.$$ 
\end{rem}
 
\begin{rem}
From \cite[Proposition 2.17]{0702-4}, we infer that the capacitary
Muckenhoupt weights class $\ca_{p,\dz}$ enjoys the strict monotonicity on the dimension $\dz$, i.e.,
$${\ca_{p,\beta}\subsetneqq \ca_{p,\dz},\quad 0<\beta<\delta\le n},~~1\le p<\fz.$$
This, combined with Theorems \ref{tm-0628-1} and \ref{tm-0720-1},  immediately implies that
the sets ${\rm{BMO}}(\rn, \ch_{\fz}^{\dz})$ and ${\rm{BLO}}(\rn, \ch_{\fz}^{\dz})$ both enjoy the strict monotonicity on the dimension $\dz\in(0,n]$, that is, for any $0<\beta<\delta\le n$,
$${\rm{BMO}}(\rn, \ch_{\fz}^{\beta})\subsetneqq {\rm{BMO}}(\rn, \ch_{\fz}^{\dz})~~\text{and}~~{\rm{BLO}}(\rn, \ch_{\fz}^{\beta})\subsetneqq {\rm{BLO}}(\rn, \ch_{\fz}^{\dz}).$$
Furthermore, by the corresponding John--Nirenberg inequalities, we know that these two embeddings are continuous with respect to the associated norms. The first embedding returns to \cite[Corollary 1.6]{0623-1}.
\end{rem}

As applications of Theorems \ref{tm-0628-1} and \ref{tm-0720-1}, we show the following
factorization theorems of spaces ${{\rm {BMO}}}(\rn, \ch_{\fz}^{\dz})$ and ${\rm BLO}(\rn,\ch_\fz^\delta)$ via capacitary Hardy--Littlewood maximal operators.

\begin{thm}\label{cor-0721-1}
Let $\dz\in(0, n]$.
\begin{enumerate}
\item[{\rm(i)}] If $f\in {{\rm {BMO}}}(\rn, \ch_{\fz}^{\dz})$,
then there exist positive constants $\alpha,\beta$, $b\in L^{\fz}(\rn, \ch_{\fz}^{\dz})$ and
$g_1, g_2\in L^1_{\loc}(\rn,\ch_{\fz}^{\dz})$ such that
    $$f=\alpha \ln \cm_{\ch_{\fz}^{\dz}}g_1-\beta \ln \cm_{\ch_{\fz}^{\dz}}g_2+ b$$
and
$$\alpha+\beta+\|b||_{L^{\fz}(\rn, \ch_{\fz}^{\dz})}\le C\|f\|_{{{\rm {BMO}}}(\rn, \ch_{\fz}^{\dz})},$$
where $C$ is a positive constant depending only on $n$ and $\dz$;
\item[{\rm(ii)}]
conversely, let $\alpha,\beta\in (0,\fz)$, $b\in L^{\fz}(\rn, \ch_{\fz}^{\dz})$ and
$g_1,g_2\in L^1_{\loc}(\rn,\ch_{\fz}^{\dz})$. If for $\ch_\fz^\delta$-almost every $x\in\rn$, $[\cm_{\ch_{\fz}^{\dz}}g_i(x)]<\fz$ with $i\in\{1,2\}$,
then
$$f\coloneqq\alpha \ln \cm_{\ch_{\fz}^{\dz}}g_1-\beta \ln \cm_{\ch_{\fz}^{\dz}}g_2+ b\in {{\rm {BMO}}}(\rn, \ch_{\fz}^{\dz})$$
and there exists a positive constant $C$ depending only on $n$ and $\dz$ such that
$$\|f\|_{{{\rm {BMO}}}(\rn, \ch_{\fz}^{\dz})}\le C\lf(\alpha+\beta+\|b||_{L^{\fz}(\rn, \ch_{\fz}^{\dz})}\r).$$
\end{enumerate}
\end{thm}

\begin{thm}\label{cor-0721-2}
Let $\dz\in(0, n]$.
\begin{enumerate}
\item[{\rm(i)}] If $f\in {{\rm {BLO}}}(\rn, \ch_{\fz}^{\dz})$,
then there exist positive constant $\alpha$, $b\in L^{\fz}(\rn, \ch_{\fz}^{\dz})$ and $g\in L^1_{\loc}(\rn,\ch_{\fz}^{\dz})$ such that $$f=\alpha \ln \cm_{\ch_{\fz}^{\dz}}g+ b$$
and
$$\alpha+\|b||_{L^{\fz}(\rn, \ch_{\fz}^{\dz})}\le C\|f\|_{{{\rm {BLO}}}(\rn, \ch_{\fz}^{\dz})},$$
where $C$ is a positive constant depending only on $n$ and $\dz$;
\item[{\rm(ii)}]
conversely, let $\alpha\in (0,\fz)$, $b\in L^{\fz}(\rn, \ch_{\fz}^{\dz})$ and $g\in L^1_{\loc}(\rn,\ch_{\fz}^{\dz})$.
If for $\ch_\fz^\delta$-almost every $x\in\rn$, $[\cm_{\ch_{\fz}^{\dz}}g(x)]<\fz$, then
$$f\coloneqq\alpha \ln \cm_{\ch_{\fz}^{\dz}}g+ b\in {{\rm {BLO}}}(\rn, \ch_{\fz}^{\dz})$$
and there exists a positive constant $C$ depending only on $n$ and $\dz$ such that
$$\|f\|_{{{\rm {BLO}}}(\rn, \ch_{\fz}^{\dz})}\le C\lf(\alpha+\|b||_{L^{\fz}(\rn, \ch_{\fz}^{\dz})}\r).$$
\end{enumerate}
\end{thm}

\begin{rem}
Theorem \ref{cor-0721-1} states that, for any $\dz\in(0, n]$,
\begin{align*}f\in {{\rm {BMO}}}(\rn, \ch_{\fz}^{\dz}) \iff &\exists\, \alpha,\beta>0, b\in L^{\fz}(\rn, \ch_{\fz}^{\dz})\ \text{and}\
g_1, g_2\in L^1_{\loc}(\rn,\ch_{\fz}^{\dz})\\
&\text{such that}\ f=\alpha \ln \cm_{\ch_{\fz}^{\dz}}g_1-\beta \ln \cm_{\ch_{\fz}^{\dz}}g_2+ b
\end{align*}
and $\|f\|_{{{\rm {BMO}}}(\rn, \ch_{\fz}^{\dz})}$ is equivalent to $\inf\{ \alpha+\beta+\|b||_{L^{\fz}(\rn, \ch_{\fz}^{\dz})}\}$ with the infimum being taken over all $\alpha,\beta$ and $b$ as above. Theorem \ref{cor-0721-2} says that, for any $\dz\in(0, n]$,
\begin{align*}f\in {{\rm {BLO}}}(\rn, \ch_{\fz}^{\dz}) \iff &\exists\, \alpha>0, b\in L^{\fz}(\rn, \ch_{\fz}^{\dz})\ \text{and}\
g\in L^1_{\loc}(\rn,\ch_{\fz}^{\dz})\\
&\text{such that}\ f=\alpha \ln \cm_{\ch_{\fz}^{\dz}}g+ b
\end{align*}
and $\|f\|_{{{\rm {BLO}}}(\rn, \ch_{\fz}^{\dz})}$ is equivalent to  $\inf\{ \alpha+\|b||_{L^{\fz}(\rn, \ch_{\fz}^{\dz})}\}$ with the infimum being taken over all $\alpha$ and $b$ as above. From these two factorizations, it is easy to see
that if $f\in L^1_{\loc}(\rn,\ch_{\fz}^{\dz})$ with $\cm_{\ch_{\fz}^{\dz}}f(x)<\fz$ for $\ch_\fz^\delta$-almost every $x\in\rn$, then
$\ln \cm_{\ch_{\fz}^{\dz}}f$ belongs to both ${{\rm {BMO}}}(\rn, \ch_{\fz}^{\dz})$ and ${{\rm {BLO}}}(\rn, \ch_{\fz}^{\dz})$; furthermore, its norm has a uniform upper bound with respect to $f\in L^1_{\loc}(\rn,\ch_{\fz}^{\dz})$.
\end{rem}

\begin{rem}
We point out that Theorems \ref{cor-0721-1} and \ref{cor-0721-2} coincide with the representation formulae of Coifman--Rochberg \cite{06-30-2} when $\delta=n$, and essentially extend them to the dimension $\delta\in(0,n)$ via some new observations and techniques.
\end{rem}

Although the classical theory of $\rm{BMO}$ spaces has been remarkably successful, with the in-depth development of harmonic analysis and partial differential equations, its limitations have gradually emerged. Especially when dealing with the following problems, the classical framework appears inadequate: weighted inequalities, non-doubling measure spaces and adaptability of differential operators and so on. These requirements have driven the development of weighted $\rm{BMO}$ spaces; see, for example, \cite{b85,0703-2,n97,tz16}, which forms an important research direction in modern harmonic analysis.

In particular, motivated by studying the behavior of the Hilbert transform,  Muckenhoupt and Wheeden \cite{mw76} introduced the weighted BMO space ${\rm BMO}_w$, and proved that $f\in {\rm BMO}$ if and only if $f\in {\rm BMO}_w$ for all $w\in A_{\fz}$. 
This builds another deep connection between Muckenhoupt weights and BMO spaces. Therefore, another objective of this paper is to explore the weighted $\rm{BMO}$ space ${{\rm {BMO}}}^q_{w}(\rn, \ch_{\fz}^{\dz})$ with Hausdorff contents for all dimension $\delta\in(0,n]$. In fact,
we discover that $${\rm{BMO}}(\rn, \ch_{\fz}^{\dz})={{\rm {BMO}}}^q_{w}(\rn, \ch_{\fz}^{\dz})$$
for any $\delta\in(0,n]$, $q\in(0,\fz)$ and $w\in\ca_{p,\dz}$ with $p\in[1,\fz)$, in the sense of equivalent quasi-norms by establishing a capacitary weighted John--Nirenberg inequality. In addition, we also establish the corresponding results for weighted $\rm{BLO}$ space ${{\rm {BLO}}}^q_{w}(\rn, \ch_{\fz}^{\dz})$ for all dimension $\delta\in(0,n]$. These recover several results existing in the literature when $\delta=n$, and generalizes them beyond measure-theoretic frameworks when $\delta\in(0,n)$.

Given a weight $w$ and a subset $E\subset\rn$, let
$w_{\ch_{\fz}^{\dz}}(E)\coloneqq\int_{E}w(x)\,d\ch_{\fz}^{\dz}$
and $p\in(0,\fz)$. The \emph{weighted Choquet--Lebesgue space}
$L^p_w(E,\ch_\fz^\delta)$ is defined as the set of
all functions $f$ satisfying
\begin{align*}
\|f\|_{L^p_w(E,\ch_\fz^\delta)}
\coloneqq\lf\{\int_E |f(x)|^pw(x)\,d\ch_\fz^\delta \r\}^{\frac 1p}<\fz.
\end{align*}
Let $L^p_{w,\loc}(\rn, \ch_{\fz}^{\dz})$ be the set of all functions $f$ satisfying that, for any  compact $K\subset \rn$, $f\in L^p_w(K,\ch_\fz^\delta)$. We introduce the weighted space ${{\rm {BMO}}}^p_{w}(\rn, \ch_{\fz}^{\dz})$ by setting
\[ {{\rm {BMO}}}^p_{w}(\rn, \ch_{\fz}^{\dz})\coloneqq\left\{f\in L^p_{w,\loc}(\rn, \ch_{\infty}^{\delta}): \|f\|_{{{\rm {BMO}}}^p_{w}(\rn, \ch_{\fz}^{\dz})}< \fz\right\},\]
where
\[\|f\|_{{{\rm {BMO}}}^p_{w}(\rn, \ch_{\fz}^{\dz})}\coloneqq\sup_{Q}\inf_{c\in \mathbb R}\lf[\frac{1}{w_{\ch_{\infty}^{\delta}}(Q)}\int_{Q}|f(x)-c|^pw(x)\,d\ch^{\dz}_{\fz}\r]^{\frac 1p}\]
with the supremum being taken over all finite cubes $Q\subset \rn$. If the definition of ${{\rm {BMO}}}^p_{w}(\rn, \ch_{\fz}^{\dz})$ is modified by replacing $\ch_{\fz}^{\dz}$ with dyadic Hausdorff content $\widetilde{\ch}^{\delta}_{\fz}$ (see Section \ref{2.1} for precise definition), then we denote this space as ${{\rm {BMO}}}^p_{w}(\rn, \widetilde{\ch}^{\delta}_{\fz})$. The weighted space ${{\rm {BLO}}}^p_{w}(\rn, \ch_{\fz}^{\dz})$ is defined by setting
\[ {{\rm {BLO}}}^p_{w}(\rn, \ch_{\fz}^{\dz})\coloneqq\left\{f\in L^p_{w,\loc}(\rn, \ch_{\infty}^{\delta}): \|f\|_{{{\rm {BLO}}}^p_{w}(\rn, \ch_{\fz}^{\dz})}< \fz\right\},\]
where
\[\|f\|_{{{\rm {BLO}}}^p_{w}(\rn, \ch_{\fz}^{\dz})}\coloneqq\sup_{Q}\lf[\frac{1}{w_{\ch_{\infty}^{\delta}}(Q)}\int_{Q}|f(x)-\esinf_{y\in Q}f(y)|^pw(x)\,d\ch^{\dz}_{\fz}\r]^{\frac 1p}\]
with the supremum being taken over all finite cubes $Q\subset \rn$. If the definition of ${{\rm {BLO}}}^p_{w}(\rn, \ch_{\fz}^{\dz})$ is adjusted by replacing $\ch_{\fz}^{\dz}$ with $\widetilde{\ch}^{\delta}_{\fz}$, then we denote this space as ${{\rm {BLO}}}^p_{w}(\rn, \widetilde{\ch}^{\delta}_{\fz})$. 

For space ${{\rm {BMO}}}^q_w(\rn, \ch_{\fz}^{\dz})$, we prove the following weighted John--Nirenberg inequality.

\begin{prop}\label{tm-0523-2}
Let $\delta\in(0,n]$, $p\in[1,\fz)$, $q\in(0,\fz)$ and $w\in\ca_{p,\dz}$.
Then there exist positive constants $c$ and $C$ such that, for any cube $Q$, $t\in(0,\fz)$
and $f\in {{\rm {BMO}}}^q_w(\rn, \ch_{\fz}^{\dz})$,
\begin{equation*}
w_{\ch_{\fz}^{\dz}}\lf(\lf\{x\in Q: |f(x)-C_{f,w,Q}|>t\r\}\r)
\le Cw_{\ch_{\fz}^{\dz}}(Q)e^{-\frac{ct}{\|f\|_{{{\rm {BMO}}}^q_w(\rn,\ch_{\fz}^{\dz})}}},
\end{equation*}
where $C_{f,w,Q}\in \Gamma_{f,w,Q}$ defined as in \eqref{gamma}.
\end{prop}

Applying this weighted John--Nirenberg inequality, we show the space ${{\rm {BMO}}}^q_{w}(\rn, \ch_{\fz}^{\dz})$ coinciding with  ${\rm{BMO}}(\rn, \ch_{\fz}^{\dz})$ in the sense of equivalent quasi-norms when $w\in\ca_{p,\dz}$.

\begin{thm}\label{them-0629-1}
Let $\delta\in(0,n]$, $p\in[1,\fz)$, $q\in(0, \fz)$ and $w\in\ca_{p,\dz}$. Then there exists a positive constant $C$ such that, for any $f\in{{{\rm {BMO}}}(\rn, \ch_{\fz}^{\dz})}$,
\begin{equation*}
\frac{1}{C}\|f\|_{{{\rm {BMO}}}(\rn, \ch_{\fz}^{\dz})}\le\|f\|_{{{\rm {BMO}}}_{w}^q(\rn, \ch_{\fz}^{\dz})}\le C\|f\|_{{{\rm {BMO}}}(\rn, \ch_{\fz}^{\dz})}.
\end{equation*}
\end{thm}

\begin{rem}
We point out that, in the case $w\equiv 1$, Theorem \ref{them-0629-1} revisits the following  equivalent quasi-norms characterization of space ${{{\rm {BMO}}}(\rn, \ch_{\fz}^{\dz})}$,
\[\frac{1}{C}\|f\|_{{{\rm {BMO}}}(\rn, \ch_{\fz}^{\dz})}\le\|f\|_{{{\rm {BMO}}}^q(\rn, \ch_{\fz}^{\dz})}\le C\|f\|_{{{\rm {BMO}}}(\rn, \ch_{\fz}^{\dz})};\]
see \cite[Corollary 1.5]{0623-1}. When $\delta=n$,
Theorem \ref{them-0629-1} goes back to the weighted characterization of classical BMO space,
\[\frac{1}{C}\|f\|_{{{\rm {BMO}}}}\le\|f\|_{{{\rm {BMO}}}^q_w}\le C\|f\|_{{{\rm {BMO}}}};\] see Muckenhoupt--Wheeden \cite[Theorem 1.5]{mw76}. In the case $w\equiv 1$ and $\delta=n$, then Theorem \ref{them-0629-1} coincides with the well-known result ${{{\rm {BMO}}}}={{{\rm {BMO}}}^q}$ with $q\in (0,\fz)$ due to  John--Nirenberg \cite{06-30-1}. Thus, the main contribution of this theorem is extending these results to both capacitary weight class $\ca_{p,\dz}$ and nonlinearity integrals setting.
We should also point out that, when the dimension $\delta\in(0,n)$, the classical approaches are no longer applicable.
\end{rem}

For space ${{\rm {BLO}}}^q_w(\rn, \ch_{\fz}^{\dz})$, we prove the following weighted John--Nirenberg inequality.

\begin{prop}\label{tm-127-1}
Let $\delta\in(0,n]$, $p\in[1,\fz)$, $q\in(0,\fz)$ and $w\in\ca_{p,\dz}$.
Then there exist positive constants $c$ and $C$ such that, for any cube $Q$, $t\in(0,\fz)$
and $f\in {{\rm {BLO}}}^q_w(\rn, \ch_{\fz}^{\dz})$,
\begin{equation*}
w_{\ch_{\fz}^{\dz}}\lf(\lf\{x\in Q: f(x)-\esinf_{y\in Q}f(y)>t\r\}\r)
\le Cw_{\ch_{\fz}^{\dz}}(Q)e^{-\frac{ct}{\|f\|_{{{\rm {BLO}}}^q_w(\rn,\ch_{\fz}^{\dz})}}}.
\end{equation*}
\end{prop}

Using this weighted John--Nirenberg inequality, we also show that, for $w\in\ca_{p,\dz}$, the space ${{\rm {BLO}}}^q_{w}(\rn, \ch_{\fz}^{\dz})$ coincides with  ${\rm{BLO}}(\rn, \ch_{\fz}^{\dz})$ in the sense of equivalent quasi-norms as follows.

\begin{thm}\label{them-127-2}
Let $\delta\in(0,n]$, $p\in[1,\fz)$, $q\in(0, \fz)$ and $w\in\ca_{p,\dz}$. Then there exists a positive constant $C$ such that, for any $f\in{{{\rm {BLO}}}(\rn, \ch_{\fz}^{\dz})}$,
\begin{equation*}
\frac{1}{C}\|f\|_{{{\rm {BLO}}}(\rn, \ch_{\fz}^{\dz})}\le\|f\|_{{{\rm {BLO}}}_{w}^q(\rn, \ch_{\fz}^{\dz})}\le C\|f\|_{{{\rm {BLO}}}(\rn, \ch_{\fz}^{\dz})}.
\end{equation*}
\end{thm}

We finish the introduction by briefly sketching out the novelty of this work.
Notice that all results obtained in this paper are valid for all dimension $\delta\in (0,n]$.
As previously stated, these essentially generalize the classical
results which only deal with the case $\delta=n$ within the framework
of linear integrals and measures.
In particular, when dimension $\delta\in (0,n)$, the situation becomes significantly different to settings of Lebesgue integrals and classical Muckenhoupt weight class. In this case, we use different method that avoids the use of absolutely continuity of ``measure", Fubini's theorem, the countable additivity of Lebesgue measures and linearity of the Lebesgue integral as the classical theory.  In other words, we find that these requirements seem superfluous in the corresponding classical theory.
This constitutes the main innovations of this work.

Secondly, recall that the original BMO
space of John--Nirenberg \cite{06-30-1} can also be defined by integral average $f_Q\coloneqq\frac1{|Q|}\int_Q f(x)\,dx$.
This definition is well-suited for proving the $A_p$-weight characterization of BMO, since $A_p$ constants are also defined involving integral averages.
However, the Choquet integral with respect to Hausdorff contents is only essentially defined for non-negative functions.
Therefore, a fundamental question is that there is no suitable integral averages for any general function $f$ in this context, and hence as previously stated, the space ${{{\rm {BMO}}}(\rn, \ch_{\fz}^{\dz})}$ is only defined by minus constant $c$. But,
to show the $A_p$-weight characterization for BMO or $A_1$-weight characterization for BLO, the following Jensen's inequality $e^{f_Q}\le (e^f)_Q$ concerning integral average $f_Q$, i.e.,
\[e^{\frac1{|Q|}\int_Q f(x)\,dx}\le \frac1{|Q|}\int_Q e^{f(x)}\,dx\]
plays a central role. Thus, to show main results the difficulty lies in: How can we define a substitute for the integral average $f_Q$ on Choquet integrals with respect to Hausdorff contents such that: (i) the above Jensen's inequality holds under the substitute of $f_Q$; (ii) the space ${{{\rm {BMO}}}(\rn, \ch_{\fz}^{\dz})}$  introduced by minus constant $c$ and minus the substitute of integral averages coincides.
To overcome this obstacle, we introduce the integral averages $f_{Q,\dz}$ as \eqref{26-1-5} in dyadic Hausdorff contents $\widetilde{\ch}_\fz^{\delta}$ involving the position part and negative part of the function $f$, and establish two Jensen-type inequalities; see Lemma \ref{lm-0708-1}. Furthermore, we also show that the space ${\rm{BMO}}(\rn, \ch_{\fz}^{\dz})$ based on mean oscillations minus the new integral averages is equivalent to the space ${\rm{BMO}}(\rn, \ch_{\fz}^{\dz})$ defined as above; see Proposition \ref{pop-0708-1}, which, together with the Jensen-type inequalities in Lemma \ref{lm-0708-1} and John--Nirenberg inequalities, then connects the  ${\rm{BMO}}(\rn, \ch_{\fz}^{\dz})$ space with capacitary weight $\ca_{p,\dz}$.
This serves as another contribution of this paper.

The paper is organized as follows. In Section \ref{2.1}, we introduce the integral average for any given function $f$ under the Hausdorff content
and then show Jensen-type inequalities related to this integral average. We end this section by proving the equivalent of ${{\rm {BMO}}}(\rn, \ch_{\fz}^{\dz})$ defined by minus constant and by minus the integral averages.
Section \ref{2.2} begins with asserting that the infimum in definition of ${\rm{BMO}}_w^q(\rn,\ch_{\fz}^{\dz})$ is attainable. Then we show the proofs of Propositions \ref{tm-0523-2} and \ref{tm-127-1}. 
Section \ref{s3} focuses on proofs of Theorems \ref{tm-0628-1} and \ref{tm-0720-1}. In Section \ref{s4}, we prove Theorems \ref{cor-0721-1} and \ref{cor-0721-2}. Finally, the proofs of Theorems \ref{them-0629-1} and \ref{them-127-2} are given in Section \ref{s5}.

Throughout the paper, the notation $f\ls g$ (resp. $f\gs g$) means $f\le Kg$ (resp. $f\ge Kg$)
for a positive constant $K$ independent of the main parameters, and
$f\sim g$ amounts to $f\ls g\ls f$. Also, we denote by $\mathbf{1}_E$ the characteristic function of set $E\subset \rn$. Given a cube $Q\subset \rn$ and $t\in(0,\fz)$, the new cube $tQ$ always denotes the cube that shares the same center as $Q$
and whose side length is scaled by a factor of $t$.

\section{Integral average and Jensen-type inequality}\label{2.1}

This section is devoted to setting up the proofs of the main theorems. We first introduce the integral average with respect to the dyadic Hausdorff content
and then show the Jensen-type inequalities related to this integral average.

To this end, we first recall some basic properties of the \emph{dyadic Hausdorff content} $\widetilde{\ch}_\fz^{\delta}$,
which is defined by setting for any subset $E$ of $\rn$,
\begin{align*}
\widetilde{\mathcal{H}}_{\fz}^{\delta }\lf(E \r)
:=\inf\lf\{\sum_i \lf[\emph{l}(Q_{i})\r]^{\delta }:\
E\subset\lf(\bigcup_i Q_{i}\r)^{\circ} \r\},
\end{align*}
where the infimum is taken over all dyadic cubes $\{Q_{i}\}_{i}$ and, for any $G\subset \rn$,
$G^{\circ}$ denote the \emph{interior} of the set $G$.
This dyadic Hausdorff content $\widetilde{\ch}^{\delta}_{\fz}$ is equivalent to $\ch_\fz^{\delta}$, namely,
there exists a positive constant $K(n, \delta)$ such that, for any subset $E\subset \rn$,
\begin{equation}\label{eq0723a}
\ch^{\delta}_{\fz}(E)\le \widetilde{\ch}_\fz^{\delta}(E)\le K(n, \delta)\ch^{\delta}_{\fz}(E).
\end{equation}
We point out that $\widetilde{\ch}^{\delta}_{\fz}$ was proved to be a capacity in the sense of Choquet; see \cite{Ad88} for $\delta\in(n-1,n]$ and \cite{YY08} for $\delta\in(0,n-1]$.

The following are basic properties of Choquet integrals with respect to Hausdorff contents, which will be frequently used in this paper.

\begin{rem}\label{12-4}
Let $\delta\in(0,n]$.
\begin{enumerate}
\item[(i)] By the sublinearity
of the Choquet integral with respect to $\widetilde{\ch}_\fz^\delta$ from \cite[p.\,13, Theorem 1]{0923-2},
we conclude that, for any non-negative functions $f$ and $g$ defined on
$E$,
$$\int_{E}f(x)+g(x)\,d\widetilde{\ch}_\fz^\delta
\le \int_{E}f(x)\,d\widetilde{\ch}_\fz^\delta+\int_{E}f(x)\,d\widetilde{\ch}_\fz^\delta.$$
Moreover,
\begin{align}\label{eq125-0}
\int_E [f(x)+g(x)]\,d\mathcal{H}_{\infty }^{\delta }
\le 2\left[\int_E f(x)\,d\mathcal{H}_{\infty }^{\delta }
+\int_E g(x)\,d\mathcal{H}_{\infty }^{\delta }\right].
\end{align}
\item[(ii)]
(H\"{o}lder's inequality) Let $p\in(1,\infty)$.
Then, for any functions $f$ and $g$ on $\rn$, we have
    \[\int_{\rn}|f(x)g(x)|\,d\widetilde{\ch}_\fz^\delta\leq \lf(\int_{\rn}|f(x)|^p\,d\widetilde{\ch}_\fz^\delta\r)^{\frac{1}{p}}\lf(\int_{\rn}|g(x)|^q\,d\widetilde{\ch}_\fz^\delta\r)^{\frac{1}{q}},\]
where $\frac1p+\frac1q=1$. This follows from the Young inequality and (i); we omit the details.
\item[(iii)] (Minkowski's inequality) Let $p\in(1,\infty)$. Then, for any functions $f$ and $g$ on $\rn$, we have
\[\lf(\int_{\rn}\lf|f(x)+g(x)\r|^p\,d\widetilde{\ch}_\fz^\delta\r)^{\frac1p}\leq \lf(\int_{\rn}\lf|f(x)\r|^p\,d\widetilde{\ch}_\fz^\delta\r)^{\frac1p}
+\lf(\int_{\rn}\lf|g(x)\r|^p\,d\widetilde{\ch}_\fz^\delta\r)^{\frac1p}.\]
This is obtained through (ii).
\item[(iv)] Let $C$ be a positive constant and $E\subset \rn$ with $\widetilde{\ch}_\fz^\delta(E)<\fz$. Then, for any function $f$ on $E$, we have
\[\int_{E}\lf[|f(x)|+C\r]\,d\widetilde{\ch}_\fz^\delta=\int_{E}|f(x)|\,d\widetilde{\ch}_\fz^\delta+C\widetilde{\ch}_\fz^\delta(E).\]
The proof can be found in \cite[Lemma 2.5]{0704-1}. Moreover, it is straightforward to verify that for $\ch_{\fz}^{\dz}$, the equality
\[\int_{E}[|f(x)|+C]\,d\ch_\fz^\delta=\int_{E}|f(x)|\,d\ch_\fz^\delta+C\ch_\fz^\delta(E).\]
\end{enumerate}
\end{rem}

Let $\dz\in(0,n]$. For a given function $f \in L_{\loc}^1(\rn,\widetilde{\ch}_\fz^{\delta})$
and a cube $Q$ of $\rn$, we define the \emph{integral average of $f$ on $Q$ with respect to the dyadic Hausdorff content} by
\begin{align}\label{26-1-5}
f_{Q,\dz}\coloneqq\frac{\int_{Q\cap E_{f,+}} f(x)\,d\widetilde{\ch}_\fz^{\delta}
-\int_{Q\cap E_{f,-}}[-f(x)]\,d\widetilde{\ch}_\fz^{\delta}}{\widetilde{\ch}_\fz^{\delta}(Q\cap E_{f,+})+\widetilde{\ch}_\fz^{\delta}(Q\cap E_{f,-})},
\end{align}
here and thereafter, $E_{f,+}\coloneqq\{x\in\rn: f(x)\ge 0\}$ and $E_{f,-}\coloneqq\{x\in\rn: f(x)<0 \}$.

This integral average plays a very important role in the present paper.
We first build the following two Jensen-type inequalities in the context of the dyadic Hausdorff content $\widetilde{\ch}_\fz^{\delta}$.

\begin{lem}\label{lm-0708-1}
Let $\dz\in(0,n]$ and $f \in L_{\loc}^1(\rn,\widetilde{\ch}_\fz^{\delta})$.
Then, for any cube $Q$ of $\rn$,
\begin{equation}\label{eq-0708-1}
e^{f_{Q,\dz}}\le\frac{1}{\widetilde{\ch}_\fz^{\delta}(Q)}
\lf\{\int_{Q\cap {E_{f,+}}}e^{f(x)}\,d\widetilde{\ch}_\fz^{\delta}+\int_{Q\cap {E_{f,-}}}e^{ f(x)}\,d\widetilde{\ch}_\fz^{\delta}\r\}
\end{equation}
and
\begin{equation}\label{eq-0708-2}
e^{-f_{Q,\dz}}\le\frac{1}{\widetilde{\ch}_\fz^{\delta}(Q)}
\lf\{\int_{Q\cap {E_{f,+}}}e^{- f(x)}\,d\widetilde{\ch}_\fz^{\delta}+\int_{Q\cap {E_{f,-}}}e^{- f(x)}\,d\widetilde{\ch}_\fz^{\delta}\r\}.
\end{equation}
\end{lem}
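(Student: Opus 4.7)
The plan is to reduce Lemma~\ref{lm-0708-1} to a pair of Jensen-type inequalities, one on $E_+:=Q\cap E_{f,+}$ and one on $E_-:=Q\cap E_{f,-}$, and then combine them using the convexity of $\exp$ together with the set-subadditivity $\widetilde{\ch}_\fz^\delta(Q)\le\widetilde{\ch}_\fz^\delta(E_+)+\widetilde{\ch}_\fz^\delta(E_-)$ coming from the decomposition $Q=E_+\cup E_-$. Introducing the shorthands $a:=\widetilde{\ch}_\fz^\delta(E_+)$, $b:=\widetilde{\ch}_\fz^\delta(E_-)$, $A:=\int_{E_+}f\,d\widetilde{\ch}_\fz^\delta$, and $B:=\int_{E_-}(-f)\,d\widetilde{\ch}_\fz^\delta$, the definition of $f_{Q,\delta}$ becomes $f_{Q,\delta}=(A-B)/(a+b)$, and \eqref{eq-0708-1} takes the clean form $e^{(A-B)/(a+b)}\le\frac{1}{\widetilde{\ch}_\fz^\delta(Q)}[\int_{E_+}e^f\,d\widetilde{\ch}_\fz^\delta+\int_{E_-}e^f\,d\widetilde{\ch}_\fz^\delta]$.

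The central step is to establish the two Jensen-type bounds $e^{A/a}\le\frac{1}{a}\int_{E_+}e^f\,d\widetilde{\ch}_\fz^\delta$ and $e^{-B/b}\le\frac{1}{b}\int_{E_-}e^f\,d\widetilde{\ch}_\fz^\delta$. Because the Choquet integral against $\widetilde{\ch}_\fz^\delta$ is not linear, the classical tangent-line proof of Jensen's inequality breaks down whenever the tangent would dip below zero, so I would work entirely at the level of distribution functions. For the first bound, using $f\ge 0$ on $E_+$ together with the layer-cake formula and the substitution $t=e^s$ gives
\[\int_{E_+}e^f\,d\widetilde{\ch}_\fz^\delta=a+\int_0^{\fz}e^s\widetilde{\ch}_\fz^\delta(\{f>s\}\cap E_+)\,ds.\]
Normalizing $\phi(s):=\widetilde{\ch}_\fz^\delta(\{f>s\}\cap E_+)/a$, which is non-increasing, takes values in $[0,1]$, and satisfies $\int_0^{\fz}\phi\,ds=A/a$, a direct rearrangement argument against the increasing weight $e^s$ (comparing $\phi$ to the extremal profile $\mathbf{1}_{[0,A/a]}$) yields $\int_0^{\fz}e^s\phi(s)\,ds\ge e^{A/a}-1$, from which the first bound follows. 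For the second bound, setting $h:=-f\ge 0$ on $E_-$, an analogous layer-cake computation produces $\int_{E_-}e^f\,d\widetilde{\ch}_\fz^\delta=\int_0^{\fz}e^{-u}\widetilde{\ch}_\fz^\delta(\{h<u\}\cap E_-)\,du$; the set-subadditivity of $\widetilde{\ch}_\fz^\delta$ applied to the decomposition $E_-=(\{h<u\}\cap E_-)\cup(\{h\ge u\}\cap E_-)$ gives $\widetilde{\ch}_\fz^\delta(\{h<u\}\cap E_-)\ge b-\widetilde{\ch}_\fz^\delta(\{h\ge u\}\cap E_-)$, and a parallel rearrangement against the decreasing weight $e^{-u}$ bounds the remaining term by $b(1-e^{-B/b})$, delivering the second bound.

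With both Jensen-type estimates in hand, the convexity of $\exp$ applied to the convex combination $f_{Q,\delta}=\frac{a}{a+b}\cdot\frac{A}{a}+\frac{b}{a+b}\cdot\bigl(-\frac{B}{b}\bigr)$ yields
\[e^{f_{Q,\delta}}\le\frac{a}{a+b}e^{A/a}+\frac{b}{a+b}e^{-B/b}\le\frac{1}{a+b}\Bigl[\int_{E_+}e^f\,d\widetilde{\ch}_\fz^\delta+\int_{E_-}e^f\,d\widetilde{\ch}_\fz^\delta\Bigr],\]
and the set-subadditivity $\widetilde{\ch}_\fz^\delta(Q)\le a+b$ closes the proof of \eqref{eq-0708-1}. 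The companion inequality \eqref{eq-0708-2} follows by the fully symmetric variant: replacing $f$ by $-f$ in the two Jensen-type steps gives $e^{B/b}\le\frac{1}{b}\int_{E_-}e^{-f}\,d\widetilde{\ch}_\fz^\delta$ (direct Jensen applied to the non-negative function $-f$ on $E_-$) and $e^{-A/a}\le\frac{1}{a}\int_{E_+}e^{-f}\,d\widetilde{\ch}_\fz^\delta$ (the subadditivity-plus-rearrangement route on the non-negative function $f$ on $E_+$), which combine with convexity in the same fashion. Degenerate cases $a=0$ or $b=0$ collapse to a single Jensen bound. The main obstacle is precisely the absence of linearity for the Choquet integral, which blocks the tangent-line integration; the rearrangement viewpoint at the level of the distribution function is the technical maneuver that bypasses this, and the correct direction of set-subadditivity of $\widetilde{\ch}_\fz^\delta$ is the decisive input for the $E_-$ estimate.
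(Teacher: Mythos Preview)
Your argument is correct, but it takes a genuinely different route from the paper, and interestingly the paper uses precisely the tangent-line approach you thought was blocked. The paper applies the pointwise inequality $e^{a}(s-a+1)\le e^{s}$ with $a=f_{Q,\delta}$, then rearranges terms (with a case split on the sign of $f_{Q,\delta}$) so that every integrand on each side is non-negative; this allows the use of monotonicity, scalar homogeneity, the constant-additivity property $\int_E(|g|+C)\,d\widetilde{\ch}_\fz^\delta=\int_E|g|\,d\widetilde{\ch}_\fz^\delta+C\widetilde{\ch}_\fz^\delta(E)$, and sublinearity (Remark~\ref{12-4}(i),(iv)) to integrate the tangent inequality over $E_+$ and $E_-$. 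Adding the two resulting estimates, the very definition of $f_{Q,\delta}$ makes the linear terms cancel, and the subadditivity $\widetilde{\ch}_\fz^\delta(Q)\le a+b$ closes the argument exactly as in your last step. Your approach instead establishes two separate Jensen-type bounds via layer-cake formulas and a rearrangement argument against monotone weights, and then combines them via convexity. This is sound (the minor issue of $\{h>u\}$ versus $\{h\ge u\}$ in the distribution functions is harmless since the two monotone functions agree a.e.), and it has the appeal of being modular, but it carries noticeably more technical overhead. The paper's method shows that the tangent-line proof does not actually break down in the Choquet setting; one only needs to push negative constants to the other side before integrating.
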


\begin{rem}\label{rem0704-1}
\begin{enumerate}
\item[(i)] We point out that inequalities \eqref{eq-0708-1} and \eqref{eq-0708-2} may be not equivalent in general.
Indeed, within the context of Hausdorff content $\widetilde{\ch}_\fz^{\delta}$, $(-f)_{Q,\dz}=-f_{Q,\dz}$ may not hold true for some cube $Q$ of $\rn$. For example,
let $n\ge 2$ and $\dz\in (0, n-1]$. Define
    $$Q\coloneqq\underbrace{(0,4] \times (0,4]\times \cdots \times (0,4]}_{n },$$
    $$E\coloneqq\underbrace{(0,1] \times (0,1]\times \cdots \times (0,1]}_{n },$$
    $$F\coloneqq\underbrace{(0,4] \times (0,4]\times \cdots \times (0,4]}_{n-1 }\times (2,4],$$
    and $G\coloneqq Q\setminus(E\cup F).$ Let
$$ f(x)\coloneqq
\begin{cases}
1,\quad &\text{when}\ x\in E,\\
 0,\quad &\text{when}\ x\in G,\\
-2, \quad &\text{when}\ x\in \rn\setminus(E\cup G).
\end{cases}
$$
Then $(-f)_{Q,\dz}\neq-f_{Q,\dz}$. Indeed, by \cite[Remark 2.3(i)]{0702-4}, we know that $\widetilde{\ch}_\fz^{\delta}(E)=1$, $\widetilde{\ch}_\fz^{\delta}(F)=4^{\dz}$, $\widetilde{\ch}_\fz^{\delta}(E\cup G)=4^{\dz}$, $\widetilde{\ch}_\fz^{\delta}(F\cup G)=4^{\dz}$. Thus,
\[f_{Q,\dz}=\frac{\widetilde{\ch}_\fz^{\delta}(E)-2\widetilde{\ch}_\fz^{\delta}(F)}{\widetilde{\ch}_\fz^{\delta}(E\cup G)+\widetilde{\ch}_\fz^{\delta}(F)}=\frac{1-2^{1+2\delta}}{2^{1+2\delta}}\]
and
\[(-f)_{Q,\dz}=\frac{2\widetilde{\ch}_\fz^{\delta}(F)-\widetilde{\ch}_\fz^{\delta}(E)}
{\widetilde{\ch}_\fz^{\delta}(E)+\widetilde{\ch}_\fz^{\delta}(F\cup G)}
=\frac{2^{1+2\delta}-1}{1+ 4^{\dz}}.\]
Therefore, $(-f)_{Q,\dz}\neq-f_{Q,\dz}$.
\item[(ii)] If $f$ is non-negative for $\widetilde{\ch}_\fz^{\delta}$-almost everywhere on $\rn$, then
 $$f_{Q,\delta}=\frac1{\widetilde{\ch}_\fz^{\delta}(Q)}\int_Q|f(x)|\,d\widetilde{\ch}_\fz^{\delta}.$$
In this case, \eqref{eq-0708-1} reduces to the following inequality, for any cube $Q$,
    \begin{equation*}
    e^{\frac{1}{\widetilde{\ch}_\fz^{\delta}(Q)}\int_Q|f(x)|\,d\widetilde{\ch}_\fz^{\delta}}\leq \frac{1}{\widetilde{\ch}_\fz^{\delta}(Q)}\int_Qe^{|f(x)|}\,d\widetilde{\ch}_\fz^{\delta}.
    \end{equation*}
This revisits the classical Jensen's inequality while $\delta=n$.
\end{enumerate}
\end{rem}

We now show Lemma \ref{lm-0708-1}.

\begin{proof}[Proof of  Lemma \ref{lm-0708-1}]
We may assume that $|f(x)|< \fz$ for every $x\in \rn$.
Let $Q$ be a given cube of $\rn$. We next prove \eqref{eq-0708-1} by two cases.

\emph{Case 1)} $f_{Q,\dz}\ge 0$. Using the trivial inequality
\begin{equation}\label{eq-0708-3}
 e^a(s-a+1)\le e^s, \quad  \forall\, a,\ s\in \mathbb R,
\end{equation}
we find that
\[e^{f_{Q,\dz}}f(x)+e^{f_{Q,\dz}}\le e^{f(x)}+f_{Q,\dz}e^{f_{Q,\dz}},\ {\rm for\ any} \ x\in {E_{f,+}} ,\]
and
\[e^{f_{Q,\dz}}\le e^{f(x)}-e^{f_{Q,\dz}}f(x)+f_{Q,\dz}e^{f_{Q,\dz}},\ {\rm for\ any} \ x\in {E_{f,-}} .\]
Subsequently, by (i) and (iv) of Remark \ref{12-4}, we have
\begin{equation}\label{eq-0708-4}
e^{f_{Q,\dz}}\int_{Q\cap {E_{f,+}}}f(x)\,d\widetilde{\ch}_\fz^{\delta}
+e^{f_{Q,\dz}}\widetilde{\ch}_\fz^{\delta}(Q\cap {E_{f,+}})
\le \int_{Q\cap {E_{f,+}}}e^{f(x)}\,d\widetilde{\ch}_\fz^{\delta}+f_{Q,\dz}e^{f_{Q,\dz}}\widetilde{\ch}_\fz^{\delta}(Q\cap {E_{f,+}})
\end{equation}
and
\begin{equation}\label{eq-0708-9}
e^{f_{Q,\dz}}\widetilde{\ch}_\fz^{\delta}(Q\cap {E_{f,-}})\le\int_{Q\cap {E_{f,-}}}e^{f(x)}\,d\widetilde{\ch}_\fz^{\delta}+e^{f_{Q,\dz}}\int_{Q\cap {E_{f,-}}}[-f(x)]\,d\widetilde{\ch}_\fz^{\delta}+f_{Q,\dz}e^{f_{Q,\dz}}\widetilde{\ch}_\fz^{\delta}(Q\cap {E_{f,-}}).
\end{equation}
By the subadditivity of Hausdorff contents, \eqref{eq-0708-4}, \eqref{eq-0708-9} and the definition of $f_{Q,\dz}$, we obtain
\begin{align*}
e^{f_{Q,\dz}}
&\le e^{f_{Q,\dz}}\frac{ {\widetilde{\ch}_\fz^{\delta}(Q\cap {E_{f,+}})+\widetilde{\ch}_\fz^{\delta}(Q\cap {E_{f,-}})}}{\widetilde{\ch}_\fz^{\delta}(Q)}\\
&\leq \frac{1}{\widetilde{\ch}_\fz^{\delta}(Q)}\lf(\int_{Q\cap {E_{f,+}}}e^{f(x)}\,d\widetilde{\ch}_\fz^{\delta}+\int_{Q\cap {E_{f,-}}}e^{f(x)}\,d\widetilde{\ch}_\fz^{\delta}\r)\\
&\quad\quad +\ \frac{e^{f_{Q,\dz}}}{\widetilde{\ch}_\fz^{\delta}(Q)}\Bigg\{f_{Q,\dz}\lf[\widetilde{\ch}_\fz^{\delta}(Q\cap {E_{f,+}})+\widetilde{\ch}_\fz^{\delta}(Q\cap {E_{f,-}})\r]\\
&\quad\quad \lf.-\lf[\int_{Q\cap {E_{f,+}}} f(x)\,d\widetilde{\ch}_\fz^{\delta}-\int_{Q\cap {E_{f,-}}}[-f(x)]\,d\widetilde{\ch}_\fz^{\delta}\r]\r\}\\
&\le\frac{1}{\widetilde{\ch}_\fz^{\delta}(Q)}\lf(\int_{Q\cap {E_{f,+}}}e^{f(x)}\,d\widetilde{\ch}_\fz^{\delta}+\int_{Q\cap {E_{f,-}}}e^{f(x)}\,d\widetilde{\ch}_\fz^{\delta}\r).
\end{align*}

\emph{Case 2)} $f_{Q,\dz}<0$. By \eqref{eq-0708-3} again, we have
\[e^{f_{Q,\dz}}f(x)+e^{f_{Q,\dz}}-f_{Q,\dz}e^{f_{Q,\dz}}\le e^{f(x)},\ {\rm for\ any} \ x\in {E_{f,+}},\]
and
\[e^{f_{Q,\dz}}-f_{Q,\dz}e^{f_{Q,\dz}}\le e^{f(x)}-e^{f_{Q,\dz}}f(x),\ {\rm for\ any} \ x\in {E_{f,-}} .\]
Then, by (i) and (iv) of Remark \ref{12-4} again, we know that
\begin{equation}\label{eq-0708-5}
e^{f_{Q,\dz}}\int_{Q\cap {E_{f,+}}}f(x)\,d\widetilde{\ch}_\fz^{\delta}+e^{f_{Q,\dz}}\widetilde{\ch}_\fz^{\delta}(Q\cap {E_{f,+}})-f_{Q,\dz}e^{f_{Q,\dz}}\widetilde{\ch}_\fz^{\delta}(Q\cap {E_{f,+}})\le \int_{Q\cap {E_{f,+}}}e^{f(x)}\,d\widetilde{\ch}_\fz^{\delta}
\end{equation}
and
\begin{equation}\label{eq-0708-6}
e^{f_{Q,\dz}}\widetilde{\ch}_\fz^{\delta}(Q\cap {E_{f,-}})-f_{Q,\dz}e^{f_{Q,\dz}}\widetilde{\ch}_\fz^{\delta}(Q\cap {E_{f,-}})\le\int_{Q\cap {E_{f,-}}}e^{f(x)}\,d\widetilde{\ch}_\fz^{\delta}+e^{f_{Q,\dz}}\int_{Q\cap {E_{f,-}}}-f(x)\,d\widetilde{\ch}_\fz^{\delta}
\end{equation}
Combining the subadditivity of Hausdorff contents, \eqref{eq-0708-5}, \eqref{eq-0708-6} and the definition of $f_{Q,\dz}$ again, we further conclude that
\begin{align*}
e^{f_{Q,\dz}}
&\le e^{f_{Q,\dz}}\frac{\widetilde{\ch}_\fz^{\delta}(Q\cap {E_{f,+}})+\widetilde{\ch}_\fz^{\delta}(Q\cap {E_{f,-}})}{\widetilde{\ch}_\fz^{\delta}(Q)}\\
&\leq \frac{1}{\widetilde{\ch}_\fz^{\delta}(Q)}\lf(\int_{Q\cap {E_{f,+}}}e^{f(x)}\,d\widetilde{\ch}_\fz^{\delta}+\int_{Q\cap {E_{f,-}}}e^{f(x)}\,d\widetilde{\ch}_\fz^{\delta}\r)\\
&\quad\quad + \frac{e^{f_{Q,\dz}}}{\widetilde{\ch}_\fz^{\delta}(Q)}\Bigg\{f_{Q,\dz}\lf[\widetilde{\ch}_\fz^{\delta}(Q\cap {E_{f,+}})+\widetilde{\ch}_\fz^{\delta}(Q\cap {E_{f,-}})\r]\\
&\quad\quad \lf.-\lf[\int_{Q\cap {E_{f,+}}} f(x)\,d\widetilde{\ch}_\fz^{\delta}-\int_{Q\cap {E_{f,-}}}-f(x)\,d\widetilde{\ch}_\fz^{\delta}\r]\r\}\\
&\le\frac{1}{\widetilde{\ch}_\fz^{\delta}(Q)}\lf(\int_{Q\cap {E_{f,+}}}e^{f(x)}\,d\widetilde{\ch}_\fz^{\delta}+\int_{Q\cap {E_{f,-}}}e^{f(x)}\,d\widetilde{\ch}_\fz^{\delta}\r).
\end{align*}
Therefore, \eqref{eq-0708-1} holds true, and similarly one can prove \eqref{eq-0708-2}.
This finishes the proof of Lemma \ref{lm-0708-1}.
\end{proof}

The following useful corollary is a consequence of Lemma \ref{lm-0708-1}.

\begin{cor}\label{cor-0708-1}
Let $\delta\in(0,n]$ and $p\in(1, \fz)$. Then there exists a positive constant $C$ such that, for any non-negative $f\in \ca_{p,\dz}$,
\begin{equation}\label{eq-0708-a}
\sup_{Q}\frac{1}{\widetilde{\ch}_\fz^{\delta}(Q)}\int_{Q}e^{\ln f(x)-(\ln f)_{Q,\dz}}\,d\widetilde{\ch}_\fz^{\delta}\leq C[f]_{\ca_{p,\dz}}
\end{equation}
and
\begin{equation}\label{eq-0708-q}
\sup_{Q}\frac{1}{\widetilde{\ch}_\fz^{\delta}(Q)}\int_{Q}e^{-\frac{1}{p-1}[\ln f(x)-(\ln f)_{Q,\dz}]}\,d\widetilde{\ch}_\fz^{\delta}\leq C[f]^{\frac{1}{p-1}}_{\ca_{p,\dz}},
\end{equation}
where the supremums are taken over all cubes of $\rn$.
\end{cor}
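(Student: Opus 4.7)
The plan is to recognize that, for any cube $Q$ and constant $c\in\mathbb R$, the positive homogeneity of the Choquet integral yields
\[\int_Q e^{\ln f(x) - c}\,d\widetilde{\ch}_\fz^{\delta} = e^{-c} \int_Q f(x)\,d\widetilde{\ch}_\fz^{\delta},\]
so that taking $c := (\ln f)_{Q,\delta}$ reduces the left side of \eqref{eq-0708-a} to $e^{-(\ln f)_{Q,\delta}}\,[\widetilde{\ch}_\fz^\delta(Q)]^{-1}\int_Q f\,d\widetilde{\ch}_\fz^\delta$. Thus the only nontrivial step will be controlling the factor $e^{-(\ln f)_{Q,\delta}}$ by an average of $f^{-1/(p-1)}$ via a Jensen-type inequality.

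To do so, I would apply Lemma \ref{lm-0708-1}\eqref{eq-0708-2} to $g := \frac{1}{p-1}\ln f$. Since $\frac{1}{p-1}>0$, the sign sets satisfy $E_{g,\pm}=E_{\ln f,\pm}$, and positive homogeneity of the Choquet integral forces $g_{Q,\delta} = \frac{1}{p-1}(\ln f)_{Q,\delta}$. This gives
\[e^{-\frac{1}{p-1}(\ln f)_{Q,\delta}}\,\widetilde{\ch}_\fz^\delta(Q) \le \int_{Q\cap E_{g,+}} f(x)^{-\frac{1}{p-1}}\,d\widetilde{\ch}_\fz^\delta + \int_{Q\cap E_{g,-}} f(x)^{-\frac{1}{p-1}}\,d\widetilde{\ch}_\fz^\delta.\]
Because $\widetilde{\ch}_\fz^\delta(X\cap A)\le\widetilde{\ch}_\fz^\delta(X\cap Q)$ whenever $A\subset Q$, applying this bound to each level set $X=\{f^{-\frac{1}{p-1}}>t\}$ and integrating in $t$ through the layer-cake formula controls the right-hand side by $2\int_Q f^{-\frac{1}{p-1}}\,d\widetilde{\ch}_\fz^\delta$. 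Raising to the $(p-1)$-th power and multiplying by $[\widetilde{\ch}_\fz^\delta(Q)]^{-1}\int_Q f\,d\widetilde{\ch}_\fz^\delta$ would then yield
\[\frac{1}{\widetilde{\ch}_\fz^\delta(Q)}\int_Q e^{\ln f - (\ln f)_{Q,\delta}}\,d\widetilde{\ch}_\fz^\delta \le 2^{p-1}\lf(\frac{1}{\widetilde{\ch}_\fz^\delta(Q)}\int_Q f^{-\frac{1}{p-1}}\,d\widetilde{\ch}_\fz^\delta\r)^{p-1}\frac{1}{\widetilde{\ch}_\fz^\delta(Q)}\int_Q f\,d\widetilde{\ch}_\fz^\delta,\]
and the equivalence \eqref{eq0723a}, transferred to Choquet integrals via the layer-cake formula, would bound the right side by $C(n,\dz,p)[f]_{\ca_{p,\dz}}$, finishing \eqref{eq-0708-a}.

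For \eqref{eq-0708-q} I would run the symmetric scheme: write the left side as $e^{\frac{1}{p-1}(\ln f)_{Q,\delta}}\cdot[\widetilde{\ch}_\fz^\delta(Q)]^{-1}\int_Q f^{-\frac{1}{p-1}}\,d\widetilde{\ch}_\fz^\delta$, apply Lemma \ref{lm-0708-1}\eqref{eq-0708-1} with $g:=\ln f$ to obtain $e^{(\ln f)_{Q,\delta}}\widetilde{\ch}_\fz^\delta(Q) \le 2\int_Q f\,d\widetilde{\ch}_\fz^\delta$ by the same sign-set splitting, raise to $\frac{1}{p-1}$, and again invoke \eqref{eq0723a} to match the definition of $[f]_{\ca_{p,\delta}}^{\frac{1}{p-1}}$.

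The main obstacle is essentially bookkeeping: the form of Lemma \ref{lm-0708-1} forces a split of the bound into contributions over the sign sets $E_{g,+}$ and $E_{g,-}$ rather than a single integral over $Q$, which is what the $\ca_{p,\dz}$-constant sees. This is reconciled, at the mild cost of a factor of $2$, by pointwise monotonicity of $\widetilde{\ch}_\fz^\delta$ applied level-set by level-set; beyond Lemma \ref{lm-0708-1}, positive homogeneity of the Choquet integral, and the equivalence \eqref{eq0723a}, no further tools are needed.
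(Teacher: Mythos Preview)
Your proposal is correct and follows essentially the same approach as the paper: apply Lemma \ref{lm-0708-1} to $\frac{1}{p-1}\ln f$ (using the homogeneity $g_{Q,\delta}=\frac{1}{p-1}(\ln f)_{Q,\delta}$), bound the sign-set split by $2\int_Q f^{-1/(p-1)}\,d\widetilde{\ch}_\fz^\delta$ via monotonicity, and conclude with \eqref{eq0723a}. The paper's proof is more terse but the ingredients are identical.
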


\begin{proof}
Observe that $p\in(1, \fz)$, $f$ is non-negative and, for any cube $Q$ of $\rn$,
$$\lf(\ln (f^{\frac1{p-1}})\r)_{Q,\delta}=\frac1{p-1}(\ln f)_{Q,\delta}$$
due to $\int_E cf(x)\,d\widetilde{\ch}_\fz^{\delta}= c\int_E f(x)\,d\widetilde{\ch}_\fz^{\delta}$ for any $c\in(0,\fz)$. Applying this and \eqref{eq-0708-2}, we find that
\begin{align*}
e^{-\frac{(\ln f)_{Q,\dz}}{p-1}}
\le \frac{2}{\widetilde{\ch}_\fz^{\delta}(Q)}\int_{Q}f(x)^{-\frac{1}{p-1}}\,d\widetilde{\ch}_\fz^{\delta}.
\end{align*}
From this and \eqref{eq0723a}, we infer that
\begin{align*}
\frac{1}{\widetilde{\ch}_\fz^{\delta}(Q)}\int_{Q}e^{\ln f(x)-(\ln f)_{Q,\delta}}\,d\widetilde{\ch}_\fz^{\delta}
&\ls\frac{1}{\widetilde{\ch}_\fz^{\delta}(Q)}\int_{Q}f(x)\,d\widetilde{\ch}_\fz^{\delta}\lf(\frac{1}{\widetilde{\ch}_\fz^{\delta}(Q)}\int_{Q}f(x)^{-\frac{1}{p-1}}\,d\widetilde{\ch}_\fz^{\delta}\r)^{p-1}\\
&\ls\frac{1}{\ch_\fz^{\delta}(Q)}\int_{Q}f(x)\,d{\ch}_\fz^{\delta}\lf(\frac{1}{{\ch}_\fz^{\delta}(Q)}\int_{Q}f(x)^{-\frac{1}{p-1}}\,d{\ch}_\fz^{\delta}\r)^{p-1}\noz\\
&\ls[f]_{\ca_{p,\dz}}\noz,
\end{align*}
which implies that \eqref{eq-0708-a} holds true. Similarly, by \eqref{eq-0708-1}, we have
\begin{align*}
e^{(\ln f)_{Q,\dz}}
\le \frac{2}{\widetilde{\ch}_\fz^{\delta}(Q)}\int_{Q}f(x)\,d\widetilde{\ch}_\fz^{\delta}
\end{align*}
and hence \eqref{eq-0708-q} also holds true.
This finishes the proof of Corollay \ref{cor-0708-1}.
\end{proof}

We end this section by giving an equivalent quasi-norm of the space ${{\rm {BMO}}}(\rn, \ch_{\fz}^{\dz})$ as follows. This conclusion plays a very important role in the proof of Theorem \ref{tm-0628-1} and also independent of interest.

\begin{prop}\label{pop-0708-1}
Let $\dz\in(0,n]$. Then, for any $f\in {{\rm {BMO}}}(\rn, \ch_{\fz}^{\dz})$, it holds that
$$\|f\|_{{{\rm {BMO}}}(\rn, \ch_{\fz}^{\dz})}
\le \|f\|_{{\widetilde{{\rm {BMO}}}}(\rn, \ch_{\fz}^{\dz})}
\le (1+2K(n,\dz)) \|f\|_{{{\rm {BMO}}}(\rn, \ch_{\fz}^{\dz})}.$$
Here and thereafter, $K(n,\dz)$ is as in \eqref{eq0723a} and
\[\|f\|_{{\widetilde{{\rm {BMO}}}}(\rn, \ch_{\fz}^{\dz})}\coloneqq\sup_{Q}\frac{1}{\ch_{\infty}^{\delta}(Q)}\int_{Q}|f(x)-f_{Q,\dz}|\,d\ch^{\dz}_{\fz},\]
where the supremum is taken over all cubes $Q\subset \rn$.
\end{prop}

\begin{rem}
We point out that when $\delta=n$ and limiting to Lebesgue measurable functions, Proposition \ref{pop-0708-1} coincides with the classical result on BMO spaces and essentially generalizes it to $\delta\in(0,n)$.
In addition, Proposition \ref{pop-0708-1} improves the recent result due to Basak et al. \cite{bcr25}. Recall that Basak et al. in \cite[Theorem 2.4]{bcr25} obtained a similar result as Proposition \ref{pop-0708-1} under the condition $f$ being non-negative function. Here, we get rid of this non-negative restriction condition by introducing the integral average $f_{Q,\delta}$ for general function $f$. 
\end{rem}

\begin{proof}[Proof of Proposition \ref{pop-0708-1}]
Obviously, for any $f\in {\rm {BMO}}(\rn, \ch_{\fz}^{\dz})$,
\[\|f\|_{ {{\rm {BMO}}}(\rn, \ch_{\fz}^{\dz})}\le\|f\|_{{\widetilde{{\rm {BMO}}}}(\rn, \ch_{\fz}^{\dz})}.\]
Conversely, let $E_1\coloneqq Q\cap E_{f,+}$ and $E_2\coloneqq Q\cap E_{f,-}$.
By \eqref{eq0723a}, (i) and (iv) of Remark \ref{12-4}, we find that, for any cube $Q$ and any $c\in\mathbb R$,
\begin{align*}
&\frac{1}{\ch_{\infty}^{\delta}(Q)}\int_{Q}|f(x)-f_{Q,\delta}|\,d\ch^{\dz}_{\fz}\\
&\quad\leq \frac{1}{\ch_{\infty}^{\delta}(Q)}\int_{Q}|f(x)-c|\,d\ch^{\dz}_{\fz}+|f_{Q,\delta}-c|\\
&\quad\leq\frac{1}{\ch_{\infty}^{\delta}(Q)}\int_{Q}|f(x)-c|\,d\ch^{\dz}_{\fz}+\frac{|\int_{E_1} f(x)\,d\widetilde{\ch}_\fz^{\delta}-c\widetilde{\ch}_\fz^{\delta}(E_1)|}{\widetilde{\ch}_\fz^{\delta}(E_1)
+\widetilde{\ch}_\fz^{\delta}(E_2)}+
\frac{|\int_{E_2}[-f(x)]\,d\widetilde{\ch}_\fz^{\delta}
+c\widetilde{\ch}_\fz^{\delta}(E_2)|}{\widetilde{\ch}_\fz^{\delta}(E_1)
+\widetilde{\ch}_\fz^{\delta}(E_2)}\\
&\quad\leq \frac{1}{\ch_{\infty}^{\delta}(Q)}\int_{Q}|f(x)-c|\,d\ch^{\dz}_{\fz}
+\frac{\int_{E_1}|f(x)-c|\,d\widetilde{\ch}_\fz^{\delta}}{\widetilde{\ch}_\fz^{\delta}(E_1)+\widetilde{\ch}_\fz^{\delta}(E_2)}
+\frac{\int_{E_2}|f(x)-c|\,d\widetilde{\ch}_\fz^{\delta}}{\widetilde{\ch}_\fz^{\delta}(E_1)+\widetilde{\ch}_\fz^{\delta}(E_2)}\\
&\quad\leq \frac{1+2K(n,\dz)}{\ch_{\infty}^{\delta}(Q)}\int_{Q}|f(x)-c|\,d\ch^{\dz}_{\fz},
\end{align*}
which implies that
\[\|f\|_{{\widetilde{{\rm {BMO}}}}(\rn, \ch_{\fz}^{\dz})}
\le (1+2K(n,\dz))\|f\|_{ {{\rm {BMO}}}(\rn, \ch_{\fz}^{\dz})}.\]
This finishes the proof of Proposition \ref{pop-0708-1}.
\end{proof}

\section{Capacitary weighted John--Nirenberg inequalities}\label{2.2}

In this section, we give the proofs of Propositions \ref{tm-0523-2} and \ref{tm-127-1}. As direct consequences,
we obtain capacitary John--Nirenberg inequalities for spaces ${{\rm {BMO}}}(\rn, \ch_{\fz}^{\dz})$ and ${{\rm {BLO}}}(\rn, \ch_{\fz}^{\dz})$, which play important role for proving Theorems \ref{tm-0628-1} and \ref{tm-0720-1}, respectively.

To this end, we first give the following conclusion, which asserts that the infimum in definition of ${\rm{BMO}}_w^q(\rn,\ch_{\fz}^{\dz})$ is attainable.

\begin{lem}\label{lm-0522-11}
Let $\dz\in (0, n]$ and $w$ be a weight on $\rn$. If $f\in {{\rm {BMO}}}_w^q(\rn, \ch_{\fz}^{\dz})$ with $q\in(0,\fz)$,
then for any cube $Q$ of $\rn$, there exists a constant $C_{f,w,Q}$ such that
\begin{equation}\label{eq-0522-16}
\frac{1}{w_{\widetilde{\ch}_\fz^{\delta}}(Q)}\int_Q|f(x)-C_{f,w,Q}|^qw(x)\,d\widetilde{\ch}_\fz^{\delta}=\inf_{c\in\mathbb R}\frac{1}{w_{\widetilde{\ch}_\fz^{\delta}}(Q)}\int_Q|f(x)-c|^qw(x)\,d\widetilde{\ch}_\fz^{\delta}.
\end{equation}
\end{lem}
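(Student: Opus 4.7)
The plan is to reduce the problem to a one-dimensional variational question: I would show that the scalar function
\[\Phi(c):=\int_Q|f(x)-c|^qw(x)\,d\widetilde{\ch}_\fz^{\delta},\qquad c\in\mathbb R,\]
attains its infimum over $\mathbb R$. Once continuity and coercivity of $\Phi$ are established, the existence of a minimizer $C_{f,Q}$ follows immediately from the extreme value theorem applied to $\Phi$ on a sufficiently large closed bounded interval.

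First I would verify finiteness. Since $f\in L^q_{\loc,w}(\rn,\ch_\fz^{\delta})$ and $\widetilde{\ch}_\fz^{\delta}$ is comparable to $\ch_\fz^{\delta}$ by \eqref{eq0723a}, we have $\Phi(0)<\fz$ and $w_{\widetilde{\ch}_\fz^{\delta}}(Q)<\fz$. Next, for continuity of $\Phi$ on $\mathbb R$, when $q\in[1,\fz)$ I would apply the Minkowski inequality in Remark \ref{12-4}(iii) to $|f-c|\,w^{1/q}\mathbf{1}_Q$ and $|c-c'|\,w^{1/q}\mathbf{1}_Q$, obtaining the Lipschitz-type estimate
\[\lf|\Phi(c)^{1/q}-\Phi(c')^{1/q}\r|\leq |c-c'|\,w_{\widetilde{\ch}_\fz^{\delta}}(Q)^{1/q}.\]
For $q\in(0,1)$, the elementary inequality $|a+b|^q\leq|a|^q+|b|^q$ combined with the sub-linearity in Remark \ref{12-4}(i) gives
\[|\Phi(c)-\Phi(c')|\leq |c-c'|^q\,w_{\widetilde{\ch}_\fz^{\delta}}(Q).\]
Either way, $\Phi$ is finite and continuous on $\mathbb R$.

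For coercivity I would exploit the same inequalities in reversed form. For $q\geq 1$, applying Minkowski to the decomposition $|c|\leq|c-f(x)|+|f(x)|$ (multiplied by $w(x)^{1/q}\mathbf{1}_Q$) yields
\[|c|\,w_{\widetilde{\ch}_\fz^{\delta}}(Q)^{1/q}\leq \Phi(c)^{1/q}+\Phi(0)^{1/q},\]
and for $q\in(0,1)$, the inequality $|c|^q\leq|c-f(x)|^q+|f(x)|^q$ together with the monotonicity and sub-additivity of the Choquet integral (Remark \ref{12-4}(i)) gives
\[|c|^q\,w_{\widetilde{\ch}_\fz^{\delta}}(Q)\leq \Phi(c)+\Phi(0).\]
Since $w_{\widetilde{\ch}_\fz^{\delta}}(Q)>0$ (as $w>0$ $\widetilde{\ch}_\fz^{\delta}$-a.e. on the non-degenerate cube $Q$), both estimates force $\Phi(c)\to\fz$ as $|c|\to\fz$. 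Hence the sub-level set $\{c\in\mathbb R:\Phi(c)\leq\Phi(0)+1\}$ lies in a bounded interval, and continuity then guarantees that the infimum is attained at some $C_{f,Q}\in\mathbb R$.

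The main obstacle I anticipate is that the standard tools, namely dominated convergence and linearity of the integral, are unavailable under the Choquet integral, so one cannot directly invoke $L^q$-theory. The workaround is to replace these tools by the Minkowski and sub-additivity properties of the dyadic Hausdorff content recorded in Remark \ref{12-4}, which are strong enough to drive both the continuity and the coercivity step. The argument thus proceeds entirely within the non-linear, non-additive framework and avoids any measure-theoretic approximation.
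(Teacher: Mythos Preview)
Your proposal is correct and follows essentially the same approach as the paper: both arguments use the subadditivity and Minkowski inequalities of Remark \ref{12-4} to obtain the boundedness (coercivity) estimate $|c|^q\lesssim \Phi(c)+\Phi(0)$ and the continuity estimate for $\Phi$ (or $\Phi^{1/q}$), then conclude by compactness. The paper phrases the compactness step via a minimizing sequence plus Bolzano--Weierstrass, whereas you invoke the extreme value theorem on a closed bounded interval; these are equivalent formulations of the same argument.
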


\begin{proof}
Since $f\in {{\rm {BMO}}}_w^q(\rn, \ch_{\fz}^{\dz})$, it follows from \eqref{eq0723a} that, for any cube $Q$ of $\rn$,
\begin{equation}\label{eq-0522-14}
\frac{1}{w_{\widetilde{\ch}_\fz^{\delta}}(Q)}\int_Q|f(x)|^qw(x)\,d\widetilde{\ch}_\fz^{\delta}< \fz
\end{equation}
and
\begin{equation}\label{eq-0522-13}
\inf_{c\in\mathbb R}\frac{1}{w_{\widetilde{\ch}_\fz^{\delta}}(Q)}\int_Q|f(x)-c|^qw(x)\,d\widetilde{\ch}_\fz^{\delta}< \fz.
\end{equation}
Observe that, for any $n\in\nn$, there exists $C_n$ such that
\begin{equation}\label{eq-0626-2}
\frac{1}{w_{\widetilde{\ch}_\fz^{\delta}}(Q)}\int_Q|f(x)-C_n|^qw(x)\,d\widetilde{\ch}_\fz^{\delta}\le\inf_{c\in\mathbb R}\frac{1}{w_{\widetilde{\ch}_\fz^{\delta}}(Q)}\int_Q|f(x)-c|^qw(x)\,d\widetilde{\ch}_\fz^{\delta}+\frac{1}{n}.
\end{equation}
Then, by \eqref{eq-0522-14}, \eqref{eq-0522-13}, \eqref{eq-0626-2} and Remark \ref{12-4}(i), we find that
\begin{align*}
|C_n|^q
&\ls\frac{1}{w_{\widetilde{\ch}_\fz^{\delta}}(Q)}\int_Q|f(x)-C_n|^qw(x)\,d\widetilde{\ch}_\fz^{\delta}
+\frac{1}{w_{\widetilde{\ch}_\fz^{\delta}}(Q)}\int_Q|f(x)|^qw(x)\,d\widetilde{\ch}_\fz^{\delta}\\
&\ls \inf_{c\in\mathbb R}\frac{1}{w_{\widetilde{\ch}_\fz^{\delta}}(Q)}\int_Q|f(x)-c|^qw(x)\,d\widetilde{\ch}_\fz^{\delta}+1
+\frac{1}{w_{\widetilde{\ch}_\fz^{\delta}}(Q)}\int_Q|f(x)|^qw(x)\,d\widetilde{\ch}_\fz^{\delta}
<\fz.
\end{align*}
Thus, $\{C_n\}_{n\in\nn}$ is a bounded sequence in $\mathbb R$, and hence  there exists a convergent subsequence $\{C_{n_k}\}_{k\in\nn}$ with a limit point $C_{f,w,Q}\in\mathbb R$ such that
$$\lim_{k\to\fz}C_{n_k}=C_{f,w,Q}.$$
Moreover, when $q\in(0,1)$, by Remark \ref{eq0723a}(i) and \eqref{eq-0626-2}, we have, for any $k\in\nn$,
\begin{align}\label{eq-0522-15}
&\frac{1}{w_{\widetilde{\ch}_\fz^{\delta}}(Q)}\int_Q|f(x)-C_{f,w,Q}|^qw(x)\,d\widetilde{\ch}_\fz^{\delta}\\
&\quad\le\frac{1}{w_{\widetilde{\ch}_\fz^{\delta}}(Q)}\int_Q|f(x)-C_{n_k}|^qw(x)\,d\widetilde{\ch}_\fz^{\delta}
+\frac{1}{w_{\widetilde{\ch}_\fz^{\delta}}(Q)}\int_Q|C_{n_k}-C_{f,w,Q}|^qw(x)\,d\widetilde{\ch}_\fz^{\delta}\noz\\
&\quad\le\inf_{c\in\mathbb R}\frac{1}{w_{\widetilde{\ch}_\fz^{\delta}}(Q)}\int_Q|f(x)-c|^qw(x)\,d\widetilde{\ch}_\fz^{\delta}+\frac{1}{n_{k}}
+|C_{n_k}-C_{f,w,Q}|^q\noz.
\end{align}
When $q\in [1,\fz)$, by Remark \ref{12-4}(iii) and \eqref{eq-0626-2}, we get, for any $k\in\nn$,
\begin{align}\label{eq-0626-1}
&\lf[\frac{1}{w_{\widetilde{\ch}_\fz^{\delta}}(Q)}\int_Q|f(x)-C_{f,w,Q}|^qw(x)\,d\widetilde{\ch}_\fz^{\delta}\r]^{\frac{1}{q}}\\
&\quad\le\lf[\frac{1}{w_{\widetilde{\ch}_\fz^{\delta}}(Q)}\int_Q|f(x)-C_{n_k}|^qw(x)\,d\widetilde{\ch}_\fz^{\delta}\r]^{\frac1q}
+\lf[\frac{1}{w_{\widetilde{\ch}_\fz^{\delta}}(Q)}\int_Q|C_{n_k}-C_{f,w,Q}|^qw(x)\,d\widetilde{\ch}_\fz^{\delta}\r]^{\frac{1}{q}}\noz\\
&\quad\le\lf[\inf_{c\in\mathbb R}\frac{1}{w_{\widetilde{\ch}_\fz^{\delta}}(Q)}\int_Q|f(x)-c|^qw(x)\,d\widetilde{\ch}_\fz^{\delta}+\frac{1}{n_{k}}\r]^{\frac{1}{q}}
+|C_{n_k}-C_{f,w,Q}|\noz.
\end{align}
Taking $k\to\fz$ on both \eqref{eq-0522-15} and \eqref{eq-0626-1},
we immediately obtain $\eqref{eq-0522-16}$. This completes the proof of Lemma \ref{lm-0522-11}.
\end{proof}

The value at which the infimum is attained in Lemma \ref{lm-0522-11} exists, but it may be not unique. The subsequent result further investigates the property of the set
consisting of all points at which the infimum is achieved.

\begin{lem}
Let $\dz\in (0, n]$, $q\in[1,\fz)$ and $w$ be a weight on $\rn$.
For any $f\in {{\rm {BMO}}}_w^q(\rn, \ch_{\fz}^{\dz})$, define
\begin{equation}\label{gamma}
\Gamma_{f,w,Q}\coloneqq\lf\{C_{f,w,Q}\in\rr:\ C_{f,w,Q}\ {\rm is\ as\ in\ Lemma\ \ref{lm-0522-11}}\ \r\}.
\end{equation}
Then the set $\Gamma_{f,w,Q}$ is a certain closed interval or single point.
\end{lem}

\begin{proof}
To see this, for any given cube $Q$ of $\rn$, let
\[F(c)\coloneqq\frac{1}{w_{\widetilde{\ch}_\fz^{\delta}}(Q)}\int_Q|f(x)-c|^qw(x)\,d\widetilde{\ch}_\fz^{\delta},
\quad \forall\,c\in \rr.\]
Then, by Remark \ref{12-4}(iii), we have, for any $c_1,\ c_2\in\mathbb R$,
\[F(c_2)^{\frac{1}{q}}\leq \lf(\frac{1}{w_{\widetilde{\ch}_\fz^{\delta}}(Q)}
\int_Q\lf(|f(x)-c_1|+|c_1-c_2|\r)^qw(x)\,d\widetilde{\ch}_\fz^{\delta}\r)^{\frac{1}{q}}\leq F(c_1)^{\frac{1}{q}}+|c_1-c_2|,\]
which implies that $F^{\frac{1}{q}}$ is  uniformly continuous on $\rr$.
Thus, $\Gamma_{f,w,Q}$ is a closed set.

By Lemma \ref{lm-0522-11}, we know that $\Gamma_{f,w,Q}\neq \emptyset$.
Without loss of generality, we may assume that there exist $c_1,c_2\in\Gamma_{f,w,Q}$ with $c_1<c_2$. Then the interval
$[c_1,c_2]\subset \Gamma_{f,w,Q}$. Indeed, for any $c\in(c_1, c_2)$ we can choose $\lambda\in(0, 1)$
such that $c=\lambda c_2+(1-\lambda)c_1$, then by Remark \ref{12-4}(iii), we have
\begin{align*}
F(c)^{\frac{1}{q}}
&\leq \lambda F(c_2)^{\frac{1}{q}}+(1-\lambda)F(c_1)^{\frac{1}{q}}\\
&=\lf\{\inf_{c\in\mathbb R}\frac{1}{w_{\widetilde{\ch}_\fz^{\delta}}(Q)}\int_Q|f(x)-c|^qw(x)\,d\widetilde{\ch}_\fz^{\delta}\r\}^{\frac{1}{q}},
\end{align*}
which implies that $c\in\Gamma_{f,w,Q}$. In conclusion, $\Gamma_{f,w,Q}$ is either a closed interval or a single point.

Next, we show that $\Gamma_{f,w,Q}$ may be a closed interval in a special case by an example.
Let $\dz=n=q=1$, $w\equiv1$, cube $Q\coloneqq [0, 2)$ and
$$f(x)\coloneqq 
\begin{cases}
2,\quad &\text{when}\ x\in[0,1),\\
0,\quad &\text{when}\ x\in \mathbb R\setminus[0,1).
\end{cases}
$$
Obviously, we find that $\widetilde{\ch}_\fz^{\delta}([0,1))=\widetilde{\ch}_\fz^{\delta}([1,2))=1$ and $\widetilde{\ch}_\fz^{\delta}([0,2))=2$.
Thus, it is not difficult to calculate that
$$F(c)\coloneqq 
\begin{cases}
1,\quad &\text{when}\ c\in[0,2],\\
|c-1|,\quad &\text{when}\ c\in \mathbb R\setminus[0,2],
\end{cases}
$$
which implies that $\Gamma_{f,w,Q}=[0, 2]$.
\end{proof}

To prove Proposition \ref{tm-0523-2}, we still need the following properties of capacitary Muckenhoupt weights established in \cite{0702-4}.

\begin{lem}\label{lm0704-1}
Let $\delta\in(0,n]$, $p\in[1,\fz)$ and $ w \in \mathcal A_{p,\delta}$.
\begin{enumerate}
\item[\rm{ (i)}]
For any cube $Q$ of $\rn$ and any subset $E\subset Q$,
\begin{equation*}
\lf[\frac{\mathcal H^\delta_\infty(E)}{\mathcal H^\delta_\infty(Q)}\r]^p
\le 2^p[w]_{\ca_{p,\delta}}\frac{ w _{\mathcal H^\delta_\infty}(E)}{ w _{\mathcal H^\delta_\infty}(Q)};
\end{equation*}
moreover, for any $t\in[1,\fz)$, we have
$ w _{\mathcal H^{\delta}_\infty}(tQ)\le 2^p[w]_{\ca_{p,\delta}} t^{p\delta} w _{\mathcal H^{\delta}_\infty}(Q)$.

\item[\rm{ (ii)}]
If $q\in[p,\fz)$, then $ w \in \mathcal A_{q,\delta}$
and $[w]_{\ca_{q, \dz}}\le 2^{q-1}[w]_{\ca_{p, \dz}}$.

\item[\rm {(iii)}]
If $p\in(1,\fz)$, then $ w^{-\frac{1}{p-1}} \in \mathcal A_{\frac{p}{p-1},\delta}$ and $[w^{-\frac{1}{p-1}}]_{\mathcal A_{\frac{p}{p-1},\delta}}=[w]_{\ca_{p,\dz}}^{\frac{1}{p-1}}$.

\item[\rm{(iv)}]
If $s\in(0,1)$, then $w^s\in\ca_{q,\dz}$ and $[w^s]_{\ca_{q, \dz}}\le 2[w]^s_{\ca_{p, \dz}}$, where $q=sp+1-s$.

\item[\rm{ (v)}]
There exist $\varepsilon\in(0,1)$ and $C>0$ such that, for any cube $Q$ and any subset $E$ of $Q$,
\[\frac{w_{\ch_{\fz}^{\dz}}(E)}{w_{\ch_{\fz}^{\dz}}(Q)}\leq C\lf(\frac{\ch_{\fz}^{\dz}(E)}{\ch_{\fz}^{\dz}(Q)}\r)^{\varepsilon}.\]

\item[\rm{ (vi)}]
There exists a constant $\gamma\in(0,1)$ such that $w^{1+\gamma}\in \ca_{p,\delta}$.
\end{enumerate}
\end{lem}

\begin{proof}
(i) follows from \cite[Lemma 2.4]{0702-4}; (ii) and (iv) can be easily obtained by the definition of capacitary Muckenhoupt weight and H\"older's inequality:
for any functions $f$ and $g$ on $\rn$,
\begin{align}\label{12e1}
\int_{\rn}|f(x)g(x)|\,d\ch_\fz^\delta\leq 2\lf(\int_{\rn}|f(x)|^p\,d\ch_\fz^\delta\r)^{\frac{1}{p}}\lf(\int_{\rn}|g(x)|^{p'}\,d\ch_\fz^\delta\r)^{\frac{1}{p'}},
\end{align}
where $p\in(1,\infty)$ and $\frac1p+\frac1{p'}=1$; see \cite{0923-2}.
(iii) is self-evident and (vi) is just \cite[Corollary 1.8]{0702-4}. We now proceed to prove (v).
According to the reverse H\"older inequality of the capacitary Muckenhoupt weight
 obtained in \cite[Theorem 1.7]{0702-4}, there exist positive constants $K$ and $\gamma$ such that, for every cube $Q$,
\[\lf[\frac{1}{\ch_{\infty}^{\delta}(Q)}\int_{Q}w(x)^{1+\gamma}\,d\ch_{\infty}^{\delta}\r]^{\frac{1}{1+\gamma}}\leq \frac{K}{\ch_{\infty}^{\delta}(Q)}\int_{Q}w(x)\,d\ch_{\infty}^{\delta}.\]
Form this and H\"older's inequality \eqref{12e1} again, we deduce that
\begin{align*}
\frac{w_{\ch_{\fz}^{\dz}}(E)}{w_{\ch_{\fz}^{\dz}}(Q)}&=\frac{1}{w_{\ch_{\fz}^{\dz}}(Q)}\int_Qw(x)\mathbf{1}_E(x)\,d\ch_{\fz}^{\dz}\\
&\leq 2\frac{1}{w_{\ch_{\fz}^{\dz}}(Q)}\lf(\int_{Q}w(x)^{1+\gamma}\,d\ch_{\infty}^{\delta}\r)^{\frac{1}{1+\gamma}}\ch_{\fz}^{\dz}(E)^{\frac{\gamma}{\gamma+1}}\ls \lf(\frac{\ch_{\fz}^{\dz}(E)}{\ch_{\fz}^{\dz}(Q)}\r)^{\varepsilon},
\end{align*}
where $\varepsilon=\frac{\gamma}{\gamma+1}$. This finishes the proof of Lemma \ref{lm0704-1}.
\end{proof}

Let $\dz\in (0, n]$ and $Q$ be a cube in $\rn$. We denote by $\mathcal D(Q)$ the set of all dyadic cubes generated by the cube $Q$. Furthermore, we denote by $\mathcal D_0(Q)$ the collection of all dyadic subcubes of $Q$, i.e., $\mathcal D_0(Q)\coloneqq \{P\in \mathcal D(Q): P\subset Q\}$. It is not difficult to see that when $Q$ is a dyadic cube in $\rn$, $\mathcal D(Q)$ represents the collection of all dyadic cubes in $\rn$. 

\iffalse 

As previously stated, throughout this paper a cube $Q$ in $\rn$ is always understood to be left-open and right-closed. But
we should point out that all results in this paper hold for general cube $Q$ (not necessarily left-open and right-closed).
Here we explain how to define dyadic subcubes for a general cube $Q$ in $\rn$: for a cube $P$, if there exists $k\in\nn\cup {0}$ such that $P$ is one of the $2^{kn}$ subcubes of equal length obtained by partitioning $Q$ equally and preserves the original open/closed state of its boundary, then $P$ is called a dyadic subcube of $Q$. We still use $\mathcal D_0(Q)$ to denote the collection of all dyadic subcubes of $Q$.

\fi

The following lemma is a variant of \cite[Lemma 2.11]{0702-4}. The proof is similar, the details being omitted.
\begin{lem}\label{lem-1203-1}
Let $\delta\in(0,n]$, $p\in [1,\fz)$, $w\in \ca_{p,\delta}$, $Q$ be a given cube of $\rn$ and $E\subset Q$. Then there exist a family $\{Q_j\}_{j\in\nn}\subset \mathcal D_0(Q)$ of non-overlapping dyadic subcubes of $Q$ and a subset $F\subset Q$ with $\ch_\fz^\delta(F)=0$ such that
$$E\subset\lf(\bigcup_{j\in\nn} Q_j\r) \bigcup F\quad {\rm and} \quad \sum_{j\in\nn} \int_{Q_j}w(x)\,d\ch_{\fz}^{\delta}\leq K[w]_{\ca_{p,\delta}} \int_Ew(x)\,d\ch_{\fz}^{\delta},$$
where $K$ is a positive constant independent of $E$ and $w$.
\end{lem}

By Lemma \ref{lem-1203-1} and the proof of \cite[Proposition 2.10]{0702-4}, we obtain the following lemma.

\begin{lem}\label{lm124-1}
Let $\delta\in(0,n]$, $p\in [1,\fz)$, $w\in \ca_{p,\delta}$, $Q$ be a given cube of $\rn$ and $E\subset Q$. Then there exist a subset $F\subset Q$ and a family $\{Q_j\}_{j\in\nn}\subset \mathcal D_0(Q)$ of non-overlapping dyadic subcubes of $Q$ such that
\begin{enumerate}
\item[{\rm(i)}]
$E\subset (\bigcup_{j\in\nn} Q_j)\cup F$ and $w_{{\ch}_\fz^{\delta}}(F)=0$;
\item[{\rm(ii)}]
$\sum_{j\in\nn}w_{{\ch}_\fz^{\delta}}(Q_j)\le K[w]_{\ca_{p,\delta}}w_{{\ch}_\fz^{\delta}}(E)$;
\item[{\rm(iii)}]
for any $j\in\nn$, we have
$w_{{\ch}_\fz^{\delta}}(Q_j)\le 2K[w]_{\ca_{p,\delta}}w_{{\ch}_\fz^{\delta}}(Q_j\cap E)$,
\end{enumerate}
where $K$ is as in Lemma \ref{lem-1203-1}.
\end{lem}

From Lemma \ref{lm124-1}, we infer the following pointwise estimate, which is used to establish the Calder\'on--Zygmund decompsition.
\begin{lem}\label{lm124-2}
Let $\dz\in (0, n]$, $p\in [1, \fz)$, $w\in\ca_{p,\dz}$ and $Q$ be a given cube of $\rn$. If $f\in L^{1}(Q,w_{\ch_{\fz}^{\dz}})$, then for $w_{\ch_{\fz}^{\dz}}$-almost everywhere $x\in Q$,
\begin{align}\label{eq1114-2}
\cm_{w_{\ch_{\fz}^{\dz}}}^{{\rm{d}}, Q}f(x)\ge \frac{1}{4K[w]_{\ca_{p,\delta}}}|f(x)|,
\end{align}
where $K$ is as in Lemma \ref{lem-1203-1} and the locally dyadic maximal function $\cm_{w_{\ch_{\fz}^{\dz}}}^{{\rm{d}}, Q}$ is defined by
\[\cm_{w_{\ch_{\fz}^{\dz}}}^{{\rm{d}}, Q}f(x)\coloneqq \sup_{P\ni x, P\in \mathcal D_0(Q)}\frac{1}{w_{\ch_{\fz}^{\dz}}(P)}\int_{P}|f(y)|\,dw_{\ch_{\fz}^{\dz}}, \quad \forall\ x\in Q,\]
with the supremum being taken over all dyadic subcubes $P$ of $Q$ containing $x$.
\end{lem}

\begin{proof}
To prove the lemma, we may assume that exists $x_0\in Q$ such that $0<|f(x_0)|<\fz$. Otherwise,
\[Q=\{x\in Q: |f(x)|=0\}\cup\{x\in Q: |f(x)|=\fz\}.\]
Since $f\in L^1(Q, w_{\ch_{\fz}^{\dz}})$, we have
\begin{align}\label{eq124-4}
w_{\ch_{\fz}^{\dz}}(\{x\in Q: |f(x)|=\fz\})=0.
\end{align}
Thus, for $w_{\ch_{\fz}^{\dz}}$-almost everywhere $x\in Q$,
\[x\in\lf\{x\in Q: |f(x)|=0\r\}\subset \lf\{x\in Q: \cm_{w_{\ch_{\fz}^{\dz}}}^{{\rm{d}}, Q}f(x)\ge \frac{1}{4K[w]_{\ca_{p,\delta}}}|f(x)|\r\},\]
which implies \eqref{eq1114-2}.

For each $k\in\zz$, define
\begin{align}\label{eq1114-5}
E_k\coloneqq \lf\{x\in Q: 2^{k-1}|f(x_0)|\leq |f(x)|<2^{k}|f(x_0)|\r\}.
\end{align}
Then clearly,
\begin{align}\label{eq1114-3}
Q&=\bigcup_{k\in\zz}E_k \cup \lf\{x\in Q: |f(x)|=0\r\}\cup \lf\{x\in Q: |f(x)|=\fz\r\}\\
&=\bigcup_{k\in\zz}E_k\cup \lf\{x\in Q: \cm_{w_{\ch_{\fz}^{\dz}}}^{{\rm{d}}, Q}f(x)\ge \frac{1}{4K[w]_{\ca_{p,\delta}}}|f(x)|\r\} \cup \lf\{x\in Q: |f(x)|=\fz\r\}.\noz
\end{align}
We now consider two cases based on the value of $w_{{\ch}_\fz^{\delta}}(E_k)$.

\emph{\textbf{Case 1)}} If $w_{{\ch}_\fz^{\delta}}(E_k)=0$, set $F_k\coloneqq E_k$.

\emph{\textbf{Case 2)}} If $w_{{\ch}_\fz^{\delta}}(E_k)>0$, applying Lemma \ref{lm124-1} to $E_k$, then there exist a subset $F_k\subset Q$ and a family $\{Q_{k,j}\}_{j\in\nn}\subset\mathcal{D}_0(Q)$ of non-overlapping cubes in $Q$ such that
\begin{enumerate}
\item[{\rm(i)}]
$E_k\subset (\bigcup_{j\in\nn} Q_{k,j})\cup F_k$ and $w_{{\ch}_\fz^{\delta}}(F_k)=0$;
\item[{\rm(ii)}]
$\sum_{j\in\nn}w_{{\ch}_\fz^{\delta}}(Q_{k,j})\le K[w]_{\ca_{p,\delta}}w_{{\ch}_\fz^{\delta}}(E_k)$;
\item[{\rm(iii)}]
for any $j\in\nn$, we have
$w_{{\ch}_\fz^{\delta}}(Q_{k,j})\le 2K[w]_{\ca_{p,\delta}}w_{{\ch}_\fz^{\delta}}(Q_{k,j}\cap E_k)$.
\end{enumerate}
From (i), for any $x\in E_k$, either $x\in\cup_{j\in\nn}Q_{k,j}$ or $x\in F_k$. If $x\in\cup_{j\in\nn}Q_{k,j}$, then there exists $j_0$ such that $x\in Q_{k,j_0}$. By \eqref{eq1114-5}, (iii) and $x\in E_k$, we have
\begin{align*}
\cm_{w_{\ch_{\fz}^{\dz}}}^{{\rm{d}}, Q}f(x)
&\ge \frac{1}{w_{\ch_{\fz}^{\dz}}(Q_{k, j_0})}\int_{Q_{k, j_0}}|f(y)|\,dw_{\ch_{\fz}^{\dz}}\ge \frac{1}{w_{\ch_{\fz}^{\dz}}(Q_{k, j_0})}\int_{Q_{k, j_0}\cap E_k}|f(y)|\,dw_{\ch_{\fz}^{\dz}}\\
&\ge \frac{w_{\ch_{\fz}^{\dz}}(Q_{k, j_0}\cap E_k)}{w_{\ch_{\fz}^{\dz}}(Q_{k, j_0})}2^{k-1}|f(x_0)|\ge \frac{1}{4K[w]_{\ca_{p,\delta}}}|f(x)|,
\end{align*}
which implies
$$x\in \lf\{y\in Q:\ \cm_{w_{\ch_{\fz}^{\dz}}}^{{\rm{d}}, Q}f(y)\ge \frac{1}{4K[w]_{\ca_{p,\delta}}}|f(y)|\r\}.$$
Therefore,
\begin{align}\label{eq1114-7}
E_k\subset \lf\{x\in Q:\ \cm_{w_{\ch_{\fz}^{\dz}}}^{{\rm{d}}, Q}f(x)\ge \frac{1}{4K[w]_{\ca_{p,\delta}}}|f(x)|\r\}\cup F_k.
\end{align}
Combining \eqref{eq1114-3},\eqref{eq1114-7} and \textbf{Case 1}, we have
\[Q=\bigcup_{k\in\zz} F_k\cup \lf\{x\in Q: \cm_{w_{\ch_{\fz}^{\dz}}}^{{\rm{d}}, Q}f(x)\ge \frac{1}{4K[w]_{\ca_{p,\delta}}}|f(x)|\r\}\cup \lf\{x\in Q: |f(x)|=\fz\r\}.\]
Note that, by (i) and  \eqref{eq124-4}, we have
\[w_{\ch_{\fz}^{\dz}}\lf(\bigcup_{k\in\zz} F_k\cup \lf\{x\in Q: |f(x)|=\fz\r\}\r)\leq \sum_{k\in\zz}w_{\ch_{\fz}^{\dz}}(F_k)+w_{\ch_{\fz}^{\dz}}\lf(\lf\{x\in Q: |f(x)|=\fz\r\}\r)=0.\]
Hence, \eqref{eq1114-2} holds for $\ch_{\fz}^{\dz}$-almost everywhere $x\in Q$. This completes the proof of Lemma \ref{lm124-2}.
\end{proof}

The following useful lemma is just \cite[Proposition 2.7]{0702-4}.
\begin{lem}\label{lm-0919-4}
Let $\delta\in(0,n]$, $p\in[1,\fz)$ and $ w \in \mathcal A_{p,\delta}$. Then there exists a positive constant $C$ such that,
for any $f\in L_w^1(\rn,\ch_\fz^\delta)$,
\[\frac{1}{4}\int_{\mathbb R^n}|f(x)|w(x)\,d\mathcal H^{\delta}_\infty
\leq\int_\rn |f(x)|\,dw_{\mathcal H^{\delta}_\infty}
\leq C[w]_{\ca_{p,\delta}}^{\frac{1}{p}}\int_{\mathbb R^n}|f(x)|w(x)\,d\mathcal H^{\delta}_\infty.\]
\end{lem}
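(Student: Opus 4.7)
The plan is to handle the two displayed inequalities separately, since they have quite different character. The left inequality will follow from the countable subadditivity of the Hausdorff content alone and does not use the $\ca_{p,\dz}$-hypothesis, whereas the right inequality genuinely requires the capacitary Muckenhoupt condition to compensate for the failure of $\ch_\fz^\dz$-additivity on bad decompositions.

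For the lower bound I would start from the elementary pointwise inclusion
\[
\{|f|w>u\}\subset\bigcup_{k\in\zz}\lf(\{|f|>2^{k}\}\cap\{w>u/2^{k+1}\}\r),
\]
valid because any $x$ in the left-hand side has $2^{k}<|f(x)|\le 2^{k+1}$ for some $k$ and hence $w(x)>u/2^{k+1}$. Applying countable subadditivity of $\ch_\fz^\dz$, integrating in $u\in(0,\fz)$, and performing the change of variable $v=u/2^{k+1}$ inside each summand, the inner integral recombines via the layer-cake representation into $w_{\ch_\fz^\dz}(\{|f|>2^{k}\})$. One then trades the dyadic sum $\sum_{k}2^{k+1}w_{\ch_\fz^\dz}(\{|f|>2^{k}\})$ for the Lebesgue integral $\int_0^\fz w_{\ch_\fz^\dz}(\{|f|>t\})\,dt$ using the monotonicity of the distribution function, and keeping track of the two factors of two that come out of this comparison produces the constant $\frac{1}{4}$.

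For the upper bound the naive inclusion $\{|f|>t\}\cap\{w>s\}\subset\{|f|w>ts\}$ points the wrong way; a formal Fubini would then yield a divergent $\int_0^\fz dt/t$, so a more careful argument is needed. First I would reduce by monotone approximation to a non-negative simple function $f=\sum_{j}2^{j}\mathbf{1}_{F_j}$ with $\{F_j\}$ pairwise disjoint. Then on each level I would apply the capacitary Calder\'on--Zygmund decomposition of Lemma \ref{lm-410-5} to extract a family of non-overlapping dyadic cubes on which $|f|w$ is comparable to its $w_{\ch_\fz^\dz}$-average, invoke the selection Lemma \ref{lm-0919-2} to pick a sub-collection on which the subadditivity defect of $\ch_\fz^\dz$ can be quantitatively reversed, and finally apply the Choquet H\"older inequality (Remark \ref{12-4}(ii)) together with the defining $\ca_{p,\dz}$-estimate to convert a $w_{\ch_\fz^\dz}$-content back into an $|f|w$-Choquet integral. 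The factor $[w]_{\ca_{p,\dz}}^{1/p}$ should emerge naturally from a single use of H\"older applied to the splitting $w=w^{1/p}\cdot w^{1/p'}$ combined with the pointwise duality $w\cdot w^{-1/(p-1)}\equiv 1$.

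The hard part will be precisely this conversion from the sub-additive Choquet integral $\int_{\rn}|f|\,dw_{\ch_\fz^\dz}$ to an additive-in-cubes quantity without losing more than a multiplicative factor of $[w]_{\ca_{p,\dz}}^{1/p}$. Without the $\ca_{p,\dz}$-hypothesis this upper inequality genuinely fails, since one can engineer weights concentrated precisely where $\ch_\fz^\dz$ is strictly subadditive; so the capacitary Muckenhoupt condition is what makes the reverse comparison available. Lemma \ref{lm-0919-2} is the tool that encodes this trade-off, and executing it correctly, including the reduction from arbitrary level sets $F_j$ to dyadic cubes, is the main technical content of the argument.
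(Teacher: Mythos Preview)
The paper does not give its own proof of this lemma; it simply cites \cite[Proposition 2.7]{0702-4}. So there is no in-paper argument to compare against, and your proposal must be assessed on its own merits.

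Your lower bound is fine. The dyadic layer-cake inclusion $\{|f|w>u\}\subset\bigcup_{k}\bigl(\{|f|>2^k\}\cap\{w>u/2^{k+1}\}\bigr)$, countable subadditivity of $\ch_\fz^\dz$, the change of variable $v=u/2^{k+1}$, and the comparison $\sum_k 2^{k+1}w_{\ch_\fz^\dz}(\{|f|>2^k\})\le 4\int_0^\infty w_{\ch_\fz^\dz}(\{|f|>t\})\,dt$ combine exactly as you say to give the constant $\tfrac14$.

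Your upper bound, by contrast, is not a proof but a list of ingredients, and you concede as much in the last paragraph. Two concrete issues: first, Lemma~\ref{lm-410-5} is a \emph{local} decomposition on a fixed cube $Q$, so ``apply the capacitary Calder\'on--Zygmund decomposition on each level'' needs a localization step you have not supplied. Second, and more seriously, you never say what quantity you are decomposing or how the output of that decomposition links $\int_0^\infty w_{\ch_\fz^\dz}(\{|f|>t\})\,dt$ back to $\int_\rn |f|w\,d\ch_\fz^\dz$. The sentence about H\"older with the splitting $w=w^{1/p}\cdot w^{1/p'}$ is suggestive but not an argument: applying H\"older to $\int_E w\,d\ch_\fz^\dz$ produces $\bigl(\int_E w^{something}\bigr)^{?}\bigl(\int_E 1\bigr)^{?}$, which does not obviously reorganize into $\int |f|w\,d\ch_\fz^\dz$. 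Since the obstacle here is precisely the failure of Fubini for Choquet integrals --- formally $\int_0^\infty\int_\rn w\mathbf{1}_{\{|f|>t\}}\,d\ch_\fz^\dz\,dt$ versus $\int_\rn w\int_0^\infty\mathbf{1}_{\{|f|>t\}}\,dt\,d\ch_\fz^\dz$ --- what is needed is a quantitative quasi-additivity statement for $w_{\ch_\fz^\dz}$ over nested level sets, and you have not articulated one. Before this can be called a proof you need to write down, for a dyadic simple $f$, exactly which covering of which level set you take, and exactly where the $\ca_{p,\dz}$ constant enters. As it stands the upper bound is a plausible research plan, not an argument.
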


Using Lemmas \ref{lm124-2} and \ref{lm-0919-4} and some standard processes, we establish the following capacitary weighted Calder\'on--Zygmund decomposition, which goes back to \cite[Theorem 3.4]{0623-1} by taking $w\equiv 1$.

\begin{lem}\label{lm-410-5} (Capacitary weighted Calder\'on--Zygmund decomposition)
Let $\delta\in(0, n]$, $w\in \ca_{p,\delta}$ with $p\in[1,\fz)$ and
 $f\in L^1_{w}(Q,\widetilde{\ch}_\fz^{\delta})$ with a given cube $Q$ of $\rn$. Then, for any $$\lambda\ge\frac{1}{w_{\widetilde{\ch}_\fz^{\delta}}(Q)}\int_{Q}|f(x)|w(x)\,d\widetilde{\ch}_\fz^{\delta},$$ there exist a collection of pairwise disjoint dyadic subcubes $\{Q_j\}_{j\in\nn}$ of $Q$, and positive constants $C_0,\,C_1\in(1,\fz)$ such that
\begin{enumerate}
\item[{\rm(i)}]
for any $j\in\nn$,
$$\lz<\frac{1}{w_{\widetilde{\ch}_\fz^{\delta}}(Q_j)}\int_{Q_j}|f(x)|w(x)\,d\widetilde{\ch}_\fz^{\delta}\le C_0\lambda;$$
\item[{\rm(ii)}]
for $w_{\widetilde{\ch}_\fz^{\delta}}$-almost every $x\in Q\backslash\cup_{j\in\nn}Q_j$, $|f(x)|\leq C_1\lambda$.
\end{enumerate}
Moreover, for any dyadic subcube $Q^*$ of $Q$, if there exists some $Q_j$ satisfying $Q_j\subset Q^*$, then
\begin{align}\label{12e2}
\frac{1}{w_{\widetilde{\ch}_\fz^{\delta}}(Q^*)}\int_{Q^*}|f(x)|w(x)\,d\widetilde{\ch}_\fz^{\delta}\le C_0\lambda.
\end{align}
\end{lem}

\begin{proof}
Let $\{R_i\}_{i}$ be the collection of all the dyadic subcubes of $Q$ which satisfies
\begin{align*}
\frac{1}{w_{\widetilde{\ch}_\fz^{\delta}}(R_i)}\int_{R_i}|f(x)|w(x)\,d\widetilde{\ch}_\fz^{\delta}>\lambda,
\end{align*}
and $\{Q_j\}_j$ be the maximal subcollection of $\{R_i\}_i$. Then, for each $Q_j$, its parent cube $Q'_j$ satisfies
\begin{align}\label{eq-0523-8}
\frac{1}{w_{\widetilde{\ch}_\fz^{\delta}}(Q'_j)}\int_{Q'_j}|f(x)|w(x)\,d\widetilde{\ch}_\fz^{\delta}\le\lambda,
\end{align}
moreover, $Q'_j\subset 3Q_j$. Thus, by Lemma \ref{lm0704-1}(i) and \eqref{eq0723a},
there exists a positive constant $C_0$ such that
\begin{align*}
w_{\widetilde{\ch}_\fz^{\delta}}(Q'_j)\le C_0 w_{\widetilde{\ch}_\fz^{\delta}}(Q_j).
\end{align*}
From this and \eqref{eq-0523-8}, we deduce that
\[\frac{1}{w_{\widetilde{\ch}_\fz^{\delta}}(Q_j)}\int_{Q_j}|f(x)|w(x)\,d\widetilde{\ch}_\fz^{\delta}\le \frac{C_0}{w_{\widetilde{\ch}_\fz^{\delta}}(Q'_j)}\int_{Q'_j}|f(x)|w(x)\,d\widetilde{\ch}_\fz^{\delta}\le C_0 \lambda.\]
Thus, (i) holds. Then, for any $x\in Q\setminus\cup_{j}Q_j$, if $P\in\mathcal D_0(Q)$ with $x\in P$,
then $P$ is not in the selected cubes $\{R_i\}_i$, and hence
\[\frac{1}{w_{\widetilde{\ch}_\fz^{\delta}}(P)}\int_{P}|f(x)|w(x)\,d\widetilde{\ch}_\fz^{\delta}\le\lambda.\]
By this, Lemma \ref{lm-0919-4} and \eqref{eq0723a}, we have,
\begin{align*}
\frac{1}{w_{{\ch}_\fz^{\delta}}(P)}\int_{P}|f(x)|\,dw_{{\ch}_\fz^{\delta}}&\ls\frac{1}{w_{{\ch}_\fz^{\delta}}(P)}\int_{P}|f(x)|w(x)\,d{\ch}_\fz^{\delta}\\
&\ls \frac{1}{w_{\widetilde{\ch}_\fz^{\delta}}(P)}\int_{P}|f(x)|w(x)\,d\widetilde{\ch}_\fz^{\delta}\ls\lambda,
\end{align*}
which implies that $\cm_{w_{\ch_{\fz}^{\dz}}}^{{\rm{d}}, Q}f(x)\ls\lambda$. Subsequently, by Lemma \ref{lm124-2}, we have, for $w_{\widetilde{\ch}_\fz^{\delta}}$-almost every $x\in Q\backslash\cup_{j\in\nn}Q_j$,
\[|f(x)|\ls \cm_{w_{\ch_{\fz}^{\dz}}}^{{\rm{d}}, Q}f(x)\ls \lambda,\]
which implies (ii). Finally, by the maximality of $\{Q_j\}_j$, we obtain \eqref{12e2}. This completes the proof of Lemma \ref{lm-410-5}.
\end{proof}

The following important lemma comes from \cite[Lemma 2.6 and Proposition 2.12]{0702-4}.

\begin{lem}\label{lm-0919-2}
Let $\delta\in(0,n]$ and $w$ be a non-negative function on $\rn$.
For any given cube $Q$ of $\rn$, if $\{Q_j\}_{j\in\nn}$ is a family of non-overlapping dyadic subcubes of $Q$, then there exist a subfamily $\{Q_{j_v}\}$ of $\{Q_j\}_{j\in\nn}$ and a sequence $\{Q^*_k\}$ of non-overlapping dyadic subcubes of $Q$, such that
$$\bigcup_{j\in\nn}Q_j\subset \lf(\bigcup_{v}Q_{j_v}\r)\bigcup\lf(\bigcup_{k}Q^*_k\r)$$
and, for any cubes $Q^*_k$, there exists some $Q_j$ such that $Q_j\subset Q^*_k$, and
$$w _{\widetilde{\ch}_\fz^{\delta}} \lf(Q^*_k\r)
\le \sum_{Q_{j_v}\subset Q^*_k} w _{\widetilde{\ch}_\fz^{\delta}}(Q_{j_v}).$$
Moreover, if $f\in L^1_w({\bigcup_{v} Q_{j_v}},\widetilde{\ch}_\fz^{\delta})$ with $w\in \mathcal A_{p,\delta}$, then there exists a positive constant $C_2$ such that
\begin{equation*}
\sum_{v}\int_{Q_{j_v}}|f(x)|w(x)\,d\widetilde{\ch}_\fz^{\delta}\le C_2\int_{\bigcup_{v} Q_{j_v}}|f(x)|w(x)\,d\widetilde{\ch}_\fz^{\delta}.
\end{equation*}
\end{lem}

Now we are ready to prove Proposition \ref{tm-0523-2}.

\begin{proof}[Proof of Proposition \ref{tm-0523-2}]
Without loss of generality, by \eqref{eq0723a}, we may assume  $\|f\|_{{{\rm {BMO}}}^q_w(\rn, \widetilde{\ch}_{\fz}^{\dz})}=1$ and for any cube $Q$ of $\rn$,
 $C_{f,w,Q}=0$, where $C_{f,w,Q}\in \Gamma_{f,w,Q}$ is as in \eqref{gamma}.
In this case, we only need to prove that there exist positive constants $c$ and $C$ such that for any cube $Q$ of $\rn$ and $t\in(0,\fz)$,
\begin{align}\label{eq-0524-2}
w_{\widetilde{\ch}_{\fz}^{\dz}}(\{x\in Q:\ |f(x)|>t\})\le Ce^{-ct}w_{\widetilde{\ch}_{\fz}^{\dz}}(Q).
\end{align}

We claim that for any $s>1$, $k\in \nn$ and $t>(k-1)C_{\dz}s$,
\begin{align}\label{eq-0622-a}
w_{\widetilde{\ch}_{\fz}^{\dz}}(\{x\in Q:\ |f(x)|>t\})\le F_k(t) w_{\widetilde{\ch}_{\fz}^{\dz}}(Q),
\end{align}
where $C_{\dz}=[(1+2^{q-1})(1+C_0)C_1]^{\frac{1}{q}}$ with $C_0$ and $C_1$ as in Lemma \ref{lm-410-5}, $F_1(t)=\frac1{t^q}$ and, when $k\ge 2$, $F_k(t)=\frac{2C_2}{s^q}F_{k-1}(t-C_{\dz}s)$ with $C_2$ as in Lemma \ref{lm-0919-2}.

We prove the above claim by induction. Since $C_{f,w,Q}=0$ and $\|f\|_{{{\rm {BMO}}}^q_w(\rn, \widetilde{\ch}_{\fz}^{\dz})}=1$, it follow that
\begin{align}\label{eq-0627-1}
\frac{1}{w_{\widetilde{\ch}_\fz^{\delta}}(Q)}\int_Q|f(x)|^qw(x)\,d\widetilde{\ch}_\fz^{\delta}\le 1.
\end{align}
By this and Chebychev's inequality, for any $t>0$, we have
\[w_{\widetilde{\ch}_{\fz}^{\dz}}(\{x\in Q: |f(x)|>t\})\le \frac{1}{t^q}\int_{Q}|f(x)|^qw(x)\,d\widetilde{\ch}_{\fz}^{\dz}\le \frac{1}{t^q}w_{\widetilde{\ch}_{\fz}^{\dz}}(Q),\]
which implies \eqref{eq-0622-a} holds true  when $k=1$. Now, we assume that, when $k=i$ with $i\in\nn$, \eqref{eq-0622-a} holds true, and we will prove that when $k=i+1$, \eqref{eq-0622-a}  also holds true.

For any $s\ge 1$, by \eqref{eq-0627-1} and Lemma \ref{lm-410-5}, there exist a collection of pairwise disjoint dyadic subcubes $\{Q_j\}_{j\in\nn}$ of $Q$ and positive constants $C_0\in(1,\fz)$ and $C_1\in(1,\fz)$ such that
\begin{enumerate}
\item[{\rm(i)}]
for any $j\in\nn$, $s^q<\frac{1}{w_{\widetilde{\ch}_\fz^{\delta}}(Q_j)}\int_{Q_j}|f(x)|^qw(x)\,d\widetilde{\ch}_\fz^{\delta}\leq C_0s^q$;
\item[{\rm(ii)}]
$|f(x)|\leq C_1^{\frac{1}{q}}s$ \ for $w_{\widetilde{\ch}_\fz^{\delta}}$-almost every $x\in Q\backslash\cup_{j\in\nn}Q_j$;
\item[{\rm(iii)}]
For any dyadic subcube $Q^*$ of $Q$, if there exists some $Q_j$ such that $Q_j\subset Q^*$, then
\[\frac{1}{w_{\widetilde{\ch}_\fz^{\delta}}(Q^*)}\int_{Q^*}|f(x)|^qw(x)\,d\widetilde{\ch}_\fz^{\delta}\le C_0s^q.\]
\end{enumerate}
The fact that
\[t> iC_{\dz}s>C_1^{\frac{1}{q}}s\]
implies
\begin{align}\label{eq-0524-4}
\{x\in Q:\ |f(x)|>t\}\subset \bigcup_{j\in\nn}Q_j\cup N,
\end{align}
where for some subset $N\subset Q$ with $w_{\widetilde{\ch}_\fz^{\delta}}(N)=0$.

For $\{Q_j\}_{j\in\nn}$, applying Lemma \ref{lm-0919-2}, there exist a subfamily $\{Q_{j_v}\}$ of $\{Q_j\}_{j\in\nn}$ and a sequence $\{Q^*_k\}$ of non-overlapping dyadic subcubes of $Q$, such that
\begin{enumerate}
\item[{\rm(iv)}]
$$\bigcup_{j\in\nn}Q_j\subset \lf(\bigcup_{v}Q_{j_v}\r)\cup\lf(\bigcup_{k}Q^*_k\r);$$

\item[{\rm(v)}] for any cubes $Q^*_k$, there exists some $Q_j$ such that $Q_j\subset Q_k^*$, and
$$w _{\widetilde{\ch}_\fz^{\delta}} \lf(Q^*_k\r)
\le \sum_{Q_{j_v}\subset Q^*_k} w _{\widetilde{\ch}_\fz^{\delta}}(Q_{j_v});$$
\item[{\rm(vi)}] for some positive constant $C_2$,
\begin{equation*}
\sum_{v}\int_{Q_{j_v}}|f(x)|^qw(x)\,d\widetilde{\ch}_\fz^{\delta}\le C_2\int_{\cup_{v} Q_{j_v}}|f(x)|^qw(x)\,d\widetilde{\ch}_\fz^{\delta}.
\end{equation*}
\end{enumerate}
Therefore, by \eqref{eq-0524-4}, (ii) and (iv), we obtain
\begin{align}\label{eq-0524-5}
&w_{\widetilde{\ch}_\fz^{\delta}}(\{x\in Q:\ |f(x)|>t\})\\
&\quad\leq \sum_{v}w_{\widetilde{\ch}_\fz^{\delta}}(\{x\in Q_{j_v}:\ |f(x)|>t\})+\sum_{k}w_{\widetilde{\ch}_\fz^{\delta}}(\{x\in Q^*_{k}:\ |f(x)|>t\}).\noz
\end{align}
For any dyadic subcube $Q'$ of $Q$, let $C_{Q'}\coloneqq C_{f,w, Q'}\in \Gamma_{f, w, Q'}$.
Then, by (iii), for any $Q'\in\{Q_{j_v},Q_{k}^*\}$, we have
\[\frac{1}{w_{\widetilde{\ch}_\fz^{\delta}}(Q')}\int_{Q'}|f(x)|^qw(x)\,d\widetilde{\ch}_\fz^{\delta}\le C_0s^q.\]
Combining this with $\|f\|_{{{\rm {BMO}}}^q_w(\rn, \widetilde{\ch}_{\fz}^{\dz})}=1$ and the fact $q\in(0,\fz)$, we conclude that
\begin{align*}
|{C_{Q'}}|^q&=\frac{1}{w_{\widetilde{\ch}_\fz^{\delta}}(Q')}\int_{Q'}|C_{Q'}|^qw(x)\,d\widetilde{\ch}_\fz^{\delta}\\
&\leq(1+ 2^{q-1})\lf\{\frac{1}{w_{\widetilde{\ch}_\fz^{\delta}}(Q')}\lf(\int_{Q'}|f(x)-C_{Q'}|^qw(x)\,d\widetilde{\ch}_\fz^{\delta}+
\int_{Q'}|f(x)|^qw(x)\,d\widetilde{\ch}_\fz^{\delta}\r)\r\}\noz\\
&\le C_{\dz}^qs^q.\noz
\end{align*}
Therefore, for any $x\in Q$,
\begin{align}\label{eq-0524-7}
|f(x)|\leq |f(x)-C_{Q'}|+|C_{Q'}|\leq |f(x)-C_{Q'}|+C_{\dz}s,
\end{align}
where $Q'\in\{Q_{j_v},Q_{k}^*\}$. Note that when $k=i+1$ and $t>(k-1)C_{\dz}s$, then $t-C_{\dz}s>(i-1)C_{\dz}s$. Subsequently, by \eqref{eq-0524-5}, \eqref{eq-0524-7}, \eqref{eq-0622-a} with $k=i$, $\|f-C_{Q'}\|_{{{\rm {BMO}}}^q_w(\rn, \widetilde{\ch}_{\fz}^{\dz})}=\|f\|_{{{\rm {BMO}}}^q_w(\rn, \widetilde{\ch}_{\fz}^{\dz})}=1$, we have
\begin{align*}
&w_{\widetilde{\ch}_\fz^{\delta}}\lf(\lf\{x\in Q: |f(x)|>t\r\}\r)\\
&\quad\leq \sum_{v}w_{\widetilde{\ch}_\fz^{\delta}}\lf(\lf\{x\in Q_{j_v}: |f(x)-C_{Q_{j_v}}|>t-C_{\dz}s\r\}\r)+\sum_{k}w_{\widetilde{\ch}_\fz^{\delta}}\lf(\lf\{x\in Q^*_{k}: |f(x)-C_{Q_{k}^*}|>t-C_{\dz}s\r\}\r)\noz\\
&\quad\leq F_i(t-C_{\dz}s)\lf(\sum_{v}w_{\widetilde{\ch}_\fz^{\delta}}(Q_{j_v})+\sum_{k}w_{\widetilde{\ch}_\fz^{\delta}}(Q^*_{k})\r)\noz.
\end{align*}
This, together with (v), (i), (vi) and \eqref{eq-0627-1}, further implies that
\begin{align*}
&w_{\widetilde{\ch}_\fz^{\delta}}\lf(\lf\{x\in Q: |f(x)|>t\r\}\r)\\
&\quad\leq F_i(t-C_{\dz}s)\lf(\sum_{v}w_{\widetilde{\ch}_\fz^{\delta}}(Q_{j_v})+\sum_{k}\sum_{Q_{j_v}\subset Q^*_k} w _{\widetilde{\ch}_\fz^{\delta}}(Q_{j_v})\r)\noz\\
&\quad\leq 2F_i(t-C_{\dz}s)\sum_{v}w_{\widetilde{\ch}_\fz^{\delta}}(Q_{j_v})\leq F_i(t-C_{\dz}s)\sum_{v}\frac{2}{s^q}\int_{Q_{j_v}}|f(x)|^qw(x)\,d\widetilde{\ch}_\fz^{\delta}\noz\\
&\quad\leq F_i(t-C_{\dz}s)\frac{2C_2}{s^q}\int_{\cup_{v} Q_{j_v}}|f(x)|^qw(x)\,d\widetilde{\ch}_\fz^{\delta}\leq F_i(t-C_{\dz}s)\frac{2C_2}{s^q}\int_{Q}|f(x)|^qw(x)\,d\widetilde{\ch}_\fz^{\delta}\noz\\
&\quad\leq F_i(t-C_{\dz}s)\frac{2C_2}{s^q}w_{\widetilde{\ch}_{\fz}^{\dz}}(Q)= F_{i+1}(t)w_{\widetilde{\ch}_{\fz}^{\dz}}(Q)\noz,
\end{align*}
which implies when $k=i+1$, \eqref{eq-0622-a} also holds true.

Finally, choosing $s=(2C_2)^{\frac2q}$, for any $t\in(0, \fz)$, if $t>C_{\dz}s$, then there exists a $k\in\nn$ such that
\[kC_{\dz}s<t\le (k+1)C_{\dz}s.\]
Thus, applying \eqref{eq-0622-a}, we have
\begin{align*}
&w_{\widetilde{\ch}_\fz^{\delta}}\lf(\lf\{x\in Q:\ |f(x)|>t\r\}\r)\\
&\quad\leq w_{\widetilde{\ch}_\fz^{\delta}}\lf(\lf\{x\in Q:\ |f(x)|>kC_{\dz}s\r\}\r)\leq F_k(kC_{\dz}s)w_{\widetilde{\ch}_{\fz}^{\dz}}(Q)\\
&\quad= \lf(\frac{2C_2}{s^q}\r)^{k-1}F_1(C_{\dz}s)w_{\widetilde{\ch}_{\fz}^{\dz}}(Q)
=\frac{1}{C_{\dz}^q}e^{-\frac{q(k+1)\ln s}{2}}w_{\widetilde{\ch}_{\fz}^{\dz}}(Q)\leq \frac{1}{C_{\dz}^q}e^{-\frac{\ln 2C_2}{C_{\dz}(2C_2)^{\frac{2}{q}}}t}w_{\widetilde{\ch}_{\fz}^{\dz}}(Q)
\noz.
\end{align*}
If $t\leq C_{\dz}s$, then use the trivial estimate
\[w_{\widetilde{\ch}_\fz^{\delta}}\lf(\lf\{x\in Q: |f(x)|>t\r\}\r)\leq e^{C_{\dz}s}e^{-t}w_{\widetilde{\ch}_{\fz}^{\dz}}(Q).\]
Therefore, for any $t>0$, \eqref{eq-0524-2} holds true. This completes the proof of Proposition \ref{tm-0523-2}.
\end{proof}

As an immediate corollary of Proposition \ref{tm-0523-2} by taking $w\equiv 1$ and $q=1$,
we have the following John--Nirenberg inequality for space ${\rm{BMO}}(\rn,\ch_{\fz}^{\dz})$. This revisits \cite[Theorem 1.3]{0623-1} and plays an important role in proof of Theorem \ref{tm-0628-1}.

\begin{cor}\label{lm-0522-17}
Let $\delta\in(0,n]$. Then there exist positive constants $c=c(n, \dz)$ and $C=C(n,\dz)$ such that, for any cube $Q$ of $\rn$, $t\in(0, \fz)$ and $f\in {\rm {BMO}}(\rn, \ch_{\fz}^{\dz})$,
\[\ch_{\fz}^{\dz}(\{x\in Q: |f(x)-C_{Q}|>t\})\le C\ch_{\fz}^{\dz}(Q)e^{-\frac{ct}{\|f\|_{{\rm {BMO}}(\rn, \ch_{\fz}^{\dz})}}}\]
where $C_Q\coloneqq C_{f,w,Q}$ as in \eqref{eq-0522-16} with $w\equiv 1$ and $q=1$.
\end{cor}

Next we are ready to prove Proposition \ref{tm-127-1}.

\begin{proof}[Proof of Proposition \ref{tm-127-1}]
Without loss of generality, by \eqref{eq0723a}, we may assume  $\|f\|_{{{\rm {BLO}}}^q_w(\rn, \widetilde{\ch}_{\fz}^{\dz})}=1$.
In this case, we only need to prove that there exist positive constants $c$ and $C$ such that,
for any cube $Q$ of $\rn$ and $t\in(0,\fz)$,
\begin{align}\label{eq128-3}
w_{\widetilde{\ch}_{\fz}^{\dz}}\lf(\lf\{x\in Q: f(x)-\esinf_{y\in Q}f(y)>t\r\}\r)\le Ce^{-ct}w_{\widetilde{\ch}_{\fz}^{\dz}}(Q).
\end{align}
We point out that the main idea to prove \eqref{eq128-3} is similar to the proof of Proposition \ref{tm-0523-2}, with
$|f(x)-C_{f,w,Q}|$ therein replaced by $f(x)-\esinf_{y\in Q}f(y)$. Thus, we only give the details for the differences.

We also prove that, for any $s>1$, $k\in \nn$ and $t>(k-1)C_{\dz}s$,
\begin{align}\label{eq128-4}
w_{\widetilde{\ch}_{\fz}^{\dz}}\lf(\lf\{x\in Q: f(x)-\esinf_{y\in Q}f(y)>t\r\}\r)\le F_k(t) w_{\widetilde{\ch}_{\fz}^{\dz}}(Q)
\end{align}
by induction,
where $C_{\dz}\coloneqq [(2^q+2)C_0C_1]^{\frac{1}{q}}$ with $C_0$ and $C_1$ as in Lemma \ref{lm-410-5}, $F_1(t)=\frac1{t^q}$ and, when $k\ge 2$, $F_k(t)=\frac{2C_2}{s^q}F_{k-1}(t-C_{\dz}s)$ with $C_2$ as in Lemma \ref{lm-0919-2}.

Indeed, it is not difficulty to find that \eqref{eq128-4} holds true for $k=1$,
and we assume that, when $k=i$ with $i\in\nn$, \eqref{eq128-4} holds true.
By an argument similar to that used in the proof of \eqref{eq-0524-5}, we conclude that there exist disjoint dyadic subcubes $\{Q_j\}_{j\in\nn}$ of $Q$, $\{Q_{j_v}\}_v$ and $\{Q_k^*\}_k$ satisfying (i)-(vi) in the proof of \eqref{eq-0524-5}
but with $f(x)$ replaced by $f(x)-\esinf_{y\in Q}f(y)$. Using those notation, we have
\begin{align}\label{eq128-7}
&w_{\widetilde{\ch}_\fz^{\delta}}\lf(\lf\{x\in Q: f(x)-\esinf_{y\in Q}f(y)>t\r\}\r)\\
&\quad\leq \sum_{v}w_{\widetilde{\ch}_\fz^{\delta}}\lf(\lf\{x\in Q_{j_v}: f(x)-\esinf_{y\in Q}f(y)>t\r\}\r)\noz\\
&\quad\quad+\sum_{k}w_{\widetilde{\ch}_\fz^{\delta}}\lf(\lf\{x\in Q^*_{k}: f(x)-\esinf_{y\in Q}f(y)>t\r\}\r).\noz
\end{align}

Now we prove that for any $Q'\in\{Q_{j},Q_{k}^*\}$,
\begin{align}\label{eq128-2}
\esinf_{y\in Q'}f(y)-\esinf_{y\in Q}f(y)\leq C_{\dz}s
\end{align}
and hence
\begin{align}\label{eq128-a}
f-\esinf_{y\in Q}f(y)\le f-\esinf_{y\in Q'}f(y)+ C_{\dz}s.
\end{align}
Since $Q_j\subset Q$, we have
\[\int_{Q_j}|f(x)-\esinf_{y\in Q_j}f(y)|^qw(x)\,d\widetilde{\ch}_\fz^{\delta}\leq \int_{Q_j}|f(x)-\esinf_{y\in Q}f(y)|^qw(x)\,d\widetilde{\ch}_\fz^{\delta}.\]
Hence, using this together with (i), we have
\begin{align*}
&\lf(\esinf_{y\in Q_j}f(y)-\esinf_{y\in Q}f(y)\r)^q=\frac{1}{w_{\widetilde{\ch}_\fz^{\delta}}(Q_j)}\int_{Q_j}|\esinf_{y\in Q_j}f(y)-\esinf_{y\in Q}f(y)|^qw(x)\,d\widetilde{\ch}_\fz^{\delta}\\
&\quad\leq \frac{2^{q-1}+1}{w_{\widetilde{\ch}_\fz^{\delta}}(Q_j)}\lf(\int_{Q_j}|f(x)-\esinf_{y\in Q_j}f(y)|^qw(x)\,d\widetilde{\ch}_\fz^{\delta}+\int_{Q_j}|f(x)-\esinf_{y\in Q}f(y)|^qw(x)\,d\widetilde{\ch}_\fz^{\delta}\r)\\
&\quad\leq \frac{2^{q}+2}{w_{\widetilde{\ch}_\fz^{\delta}}(Q_j)}\int_{Q_j}|f(x)-\esinf_{y\in Q}f(y)|^qw(x)\,d\widetilde{\ch}_\fz^{\delta}\leq (2^q+2)C_0s^q,
\end{align*}
which implies that \eqref{eq128-2} holds for $Q'\in \{Q_j\}$. Moreover, for any $Q'\in\{Q_k^*\}$, by (iv), there exists some $Q_j$ such that $Q_j\subset Q'$. Subsequently, \[\esinf_{y\in Q'}f(y)-\esinf_{y\in Q}f(y)\leq \esinf_{y\in Q_j}f(y)-\esinf_{y\in Q}f(y)\leq C_{\dz}s,\]
which implies that \eqref{eq128-2} holds for $Q'\in \{Q^{*}_k\}$.

Finally, applying \eqref{eq128-a} and an argument similar to that used in the proof of Proposition \ref{tm-0523-2} after \eqref{eq-0524-7}, we obtain \eqref{eq128-3}.
This completes the proof of Proposition \ref{tm-127-1}.
\end{proof}

As a direct consequence of Proposition \ref{tm-127-1} by taking $w\equiv 1$ and $q=1$,
we have the following John--Nirenberg inequality for space ${\rm{BLO}}(\rn,\ch_{\fz}^{\dz})$.

\begin{cor}\label{them-0627-1}
Let $\delta\in(0,n]$ and $f\in {{\rm {BLO}}}(\rn, \ch_{\fz}^{\dz})$.
Then, for any cube $Q$ and  $t\in(0,\fz)$,
\begin{equation*}
\ch_{\fz}^{\dz}\lf(\lf\{x\in Q:\ f(x)-\esinf_{y\in Q}f(y)>t\r\}\r)\le C\ch_{\fz}^{\dz}(Q)e^{-\frac{ct}{\|f\|_{{{\rm {BLO}}}(\rn, \ch_{\fz}^{\dz})}}},
\end{equation*}
where $c$ and $C$ are positive constants independent of $f$, $Q$ and $t$.
\end{cor}

When this article nearly completing, we learned from Professor Daniel Spector that Basak et al. \cite{bcr25} independently obtained a local version of John--Nirenberg inequality for the space ${\rm{BLO}}(Q_0,\ch_{\fz}^{\dz})$ with $Q_0$ being a finite cube of $\rn$. Their inequality is in fact equivalent to
Corollary \ref{them-0627-1}.

As an immediately consequence of Corollary \ref{them-0627-1}, the following equivalent quasi-norm for ${{\rm {BLO}}}(\rn, \ch_{\fz}^{\dz})$ is obtained, which can also be regarded as a corollary of Theorem \ref{them-127-2} by taking $w\equiv 1$.

\begin{cor}
Let $\delta\in(0,n]$ and $q\in(0,\fz)$.
Then there exists a positive constant $C$ such that, for any $f\in {{\rm {BLO}}}(\rn, \ch_{\fz}^{\dz})$,
\[\frac1 C\|f\|_{{{\rm {BLO}}}(\rn, \ch_{\fz}^{\dz})}\le \|f\|_{{{\rm {BLO}}}^q(\rn, \ch_{\fz}^{\dz})}\le C \|f\|_{{{\rm {BLO}}}(\rn, \ch_{\fz}^{\dz})},\]
where \[\|f\|_{{{\rm {BLO}}}^q(\rn, \ch_{\fz}^{\dz})}\coloneqq \sup_{Q}\left[\frac{1}{\ch_{\infty}^{\delta}(Q)}\int_{Q}|f(x)-\esinf_{y\in Q}f(y)|^q\,d\ch^{\dz}_{\fz}\right]^{\frac1q}.\]
\end{cor}

\section{Proofs of Theorems \ref{tm-0628-1} and \ref{tm-0720-1}}\label{s3}

We first prove Theorem \ref{tm-0628-1}.

\begin{proof}[Proof of Theorem \ref{tm-0628-1}]
(i) We first prove that when $w\in\ca_{p,\dz}$ with $p=2$, \eqref{eq-0628-1} holds true. To this end, by Corollary \ref{cor-0708-1} with $p=2$, for any cube $Q$ of $\rn$, we have
\begin{equation}\label{eq-0522-18}
\frac{1}{\widetilde{\ch}_\fz^{\delta}(Q)}\int_{Q}e^{\ln w(x)-(\ln w)_{Q,\dz}}\,d\widetilde{\ch}_\fz^{\delta}\ls[w]_{\ca_{2,\dz}}
\end{equation}
and
\begin{equation}\label{eq-0522-19}
\frac{1}{\widetilde{\ch}_\fz^{\delta}(Q)}\int_{Q}e^{(\ln w)_{Q,\dz}-\ln w(x)}\,d\widetilde{\ch}_\fz^{\delta}\ls[w]_{\ca_{2,\dz}}.
\end{equation}
Note that, for any $t\in \mathbb R$, $e^t+e^{-t}>|t|$. Thus, applying \eqref{eq0723a}, \eqref{eq-0522-18} and \eqref{eq-0522-19}, we find that, for any cube $Q$ of $\rn$,
\begin{align*}
&\frac{1}{{\ch}_\fz^{\delta}(Q)}\int_Q|\ln w(x)-(\ln w)_{Q,\dz}|\,d{\ch}_\fz^{\delta}\\
&\quad\ls \frac{1}{\widetilde{\ch}_\fz^{\delta}(Q)}\int_Q|\ln w(x)-(\ln w)_{Q,\dz}|\,d\widetilde{\ch}_\fz^{\delta}\\
&\quad\ls \frac{1}{\widetilde{\ch}_\fz^{\delta}(Q)}\int_{Q}e^{\ln w(x)-(\ln w)_{Q,\dz}}\,d\widetilde{\ch}_\fz^{\delta}+\frac{1}{\widetilde{\ch}_\fz^{\delta}(Q)}\int_{Q}e^{-[\ln w(x)-(\ln w)_{Q,\dz}]}\,d\widetilde{\ch}_\fz^{\delta}\\
&\quad\ls [w]_{\ca_{2,\dz}},
\end{align*}
which, combined with Proposition \ref{pop-0708-1}, implies that $\ln f\in {\rm {BMO}}(\rn, \ch_{\fz}^{\dz})$ and
$$\|\ln w\|_{{\rm {BMO}}(\rn, \ch_{\fz}^{\dz})}\ls[w]_{\ca_{2,\dz}}.$$
Consequently, \eqref{eq-0628-1} holds true for $p=2$.

When $w\in\ca_{p,\dz}$ with $p\in(1, 2)$, from Lemma \ref{lm0704-1}(ii), we know that $w\in \ca_{2,\dz}$ and $[w]_{\ca_{2,\dz}}\le [w]_{\ca_{p,\dz}}$. Therefore, we have
\begin{align}\label{12e4}
\|\ln w\|_{{\rm {BMO}}(\rn, \ch_{\fz}^{\dz})}\ls[w]_{\ca_{2,\dz}}\ls[w]_{\ca_{p,\dz}},
\end{align}
which means that \eqref{eq-0628-1} also holds true when $w\in \ca_{p,\delta}$ with $p\in(1,2)$.

When $w\in\ca_{p,\dz}$ with $p\in(2, \fz)$, from Lemma \ref{lm0704-1}(iii), we know that
$w^{-\frac{1}{p-1}}\in \ca_{\frac{p}{p-1},\delta}$ with $\frac{p}{p-1}\in(1,2)$ and
$$[w^{-\frac{1}{p-1}}]_{\ca_{\frac{p}{p-1},\dz}}= [w]^{\frac{1}{p-1}}_{\ca_{p,\dz}}.$$
Therefore, form \eqref{12e4}, it follows that
\[\|\ln w\|_{{\rm {BMO}}(\rn, \ch_{\fz}^{\dz})}
=(p-1)\|\ln w^{-\frac{1}{p-1}}\|_{{\rm {BMO}}(\rn, \ch_{\fz}^{\dz})}
\ls [w^{-\frac{1}{p-1}}]_{\ca_{\frac{p}{p-1},\dz}}
\sim [w]^{\frac{1}{p-1}}_{\ca_{p,\dz}}.\]
which implies that, when $w\in\ca_{p,\dz}$ with $p\in(2, \fz)$, \eqref{eq-0628-1} holds true. Combining the above three cases, we obtain (i).

(ii) We show that if $f\in {{\rm {BMO}}}(\rn, \ch_{\fz}^{\dz})$, then \eqref{eq-0628-3} holds true. We begin with the case $p=2$. Indeed, by Remark \ref{12-4}(iv) and Corollary \ref{lm-0522-17}, for any cube $Q$ of $\rn$ and $\gamma\in(0, c)$ with $c$ as in Corollary \ref{lm-0522-17}, we have
\begin{align}\label{eq-0628-x}
&\frac{1}{{\ch}_\fz^{\delta}(Q)}\int_Qe^{\frac{\gamma|f(x)-C_Q|}{\|f\|_{{\rm {BMO}}(\rn, \ch_{\fz}^{\dz})}}}\,d{\ch}_\fz^{\delta}\\
&\quad=1+\frac{1}{{\ch}_\fz^{\delta}(Q)}\int_Q\lf[e^{\frac{\gamma|f(x)-C_Q|}{\|f\|_{{\rm {BMO}}(\rn, \ch_{\fz}^{\dz})}}}-1\r]\,d{\ch}_\fz^{\delta}\noz\\
&\quad=1+\frac{1}{{\ch}_\fz^{\delta}(Q)}\int_{0}^{\fz}{\ch}_\fz^{\delta}\lf(\lf\{x\in Q: e^{\frac{\gamma|f(x)-C_Q|}{\|f\|_{{\rm {BMO}}(\rn, \ch_{\fz}^{\dz})}}}-1>t\r\}\r)\,dt\noz\\
&\quad=1+\frac{1}{{\ch}_\fz^{\delta}(Q)}\int_{0}^{\fz}e^t{\ch}_\fz^{\delta}\lf(\lf\{x\in Q: \frac{\gamma|f(x)-C_Q|}{\|f\|_{{\rm {BMO}}(\rn, \ch_{\fz}^{\dz})}}>t\r\}\r)\,dt\noz\\
&\quad\le 1+C\int_{0}^{\fz}e^{t(1-\frac{c}{\gamma})}\,dt=1+\frac{C\gamma}{c-\gamma}.\noz
\end{align}
Therefore,
\begin{align*}
&\frac{1}{{\ch}_\fz^{\delta}(Q)}\int_Qe^{\frac{\gamma f(x)}{\|f\|_{{{\rm BMO}}(\rn, \ch_{\fz}^{\dz})}}}\,d{\ch}_\fz^{\delta}\frac{1}{{\ch}_\fz^{\delta}(Q)}\int_Qe^{-\frac{\gamma f(x)}{\|f\|_{{\rm {BMO}}(\rn, \ch_{\fz}^{\dz})}}}\,d{\ch}_\fz^{\delta}\\
&=\frac{1}{{\ch}_\fz^{\delta}(Q)}\int_Qe^{\frac{\gamma (f(x)-C_Q)}{\|f\|_{{\rm {BMO}}(\rn, \ch_{\fz}^{\dz})}}}\,d{\ch}_\fz^{\delta}\frac{1}{{\ch}_\fz^{\delta}(Q)}\int_Qe^{-\frac{\gamma (f(x)-C_Q)}{\|f\|_{{\rm {BMO}}(\rn, \ch_{\fz}^{\dz})}}}\,d{\ch}_\fz^{\delta}\\
&\leq\lf(\frac{1}{{\ch}_\fz^{\delta}(Q)}\int_Qe^{\frac{\gamma|f(x)-C_Q|}{\|f\|_{{\rm {BMO}}(\rn, \ch_{\fz}^{\dz})}}}\,d{\ch}_\fz^{\delta}\r)^2\\
&\le \lf(1+\frac{C\gamma}{c-\gamma}\r)^2,
\end{align*}
which implies $$e^{\frac{\gamma f}{\|f\|_{{\rm {BMO}}(\rn, \ch_{\fz}^{\dz})}}}\in \ca_{2,\dz}~~~ \text{and}~~~\lf[e^{\frac{\gamma f}{\|f\|_{{\rm {BMO}}(\rn, \ch_{\fz}^{\dz})}}}\r]_{\ca_{2,\dz}}\le \lf(1+\frac{C\gamma}{c-\gamma}\r)^2.$$ 
Therefore,  \eqref{eq-0628-3} holds true when $p=2$.

Furthermore, Lemma \ref{lm0704-1}(ii) immediately yields that \eqref{eq-0628-3} also holds true when $p\in (2,\fz)$.
Additionally, when $p\in (1,2)$, noting that $p=2(p-1)+1-(p-1)$, by Lemma \ref{lm0704-1}(iv), we have $$e^{\frac{\gamma(p-1) f}{\|f\|_{{\rm {BMO}}(\rn, \ch_{\fz}^{\dz})}}}\in \ca_{p,\dz}~~~ \text{and}~~~\lf[e^{\frac{\gamma(p-1) f}{\|f\|_{{\rm {BMO}}(\rn, \ch_{\fz}^{\dz})}}}\r]_{\ca_{p,\dz}}\le 2 \lf(1+\frac{C\gamma}{c-\gamma}\r)^{2(p-1)}.$$ This proves  \eqref{eq-0628-3} for $p\in (1,2)$. Combining all of these, we obtain (ii), and hence finish the proof of Theorem \ref{tm-0628-1}.
\end{proof}

We now prove Theorem \ref{tm-0720-1}.

\begin{proof}[Proof of Theorem \ref{tm-0720-1}]
(i) We first prove that when $w\in\ca_{1,\dz}$, \eqref{eq-0628-4} holds true. By the definition of $\ca_{1, \dz}$, we have,
for any cube $Q$ of $\rn$ and $\ch_{\fz}^{\dz}$-almost everywhere $y\in Q$,
\begin{equation}\label{eq-720x}
\frac{1}{\ch_{\fz}^{\dz}(Q)}\int_{Q}e^{\ln w(x)}\,d\ch_{\fz}^{\dz}\le [w]_{\ca_{1,\delta}}e^{\ln w(y)}.
\end{equation}
We let
\[E_1\coloneqq  \lf\{x\in \rn: \ln w(x)-\esinf_{y\in Q}\ln w(y)\ge 0\r\}\]
and
\[E_2\coloneqq  \lf\{x\in \rn: \ln w(x)-\esinf_{y\in Q}\ln w(y)<0\r\}.\]
Then $\widetilde{\ch}_\fz^{\delta}(Q\cap E_1)=\widetilde{\ch}_\fz^{\delta}(Q)$ and $\widetilde{\ch}_\fz^{\delta}(Q\cap E_2)=0.$
From this and \eqref{eq-720x}, we deduce that
\begin{equation*}
\frac{1}{\ch_{\fz}^{\dz}(Q)}\int_{Q}e^{\ln w(x)-\esinf_{y\in Q}\ln w(y)}\,d\ch_{\fz}^{\dz}\le [w]_{\ca_{1,\delta}}.
\end{equation*}
Thus, by \eqref{eq0723a} and Remark \ref{rem0704-1}(ii), we further find
\[\frac{1}{\widetilde{\ch}_\fz^{\delta}(Q)}\int_{Q}|\ln w(x)-\esinf_{y\in Q}\ln w(y)|\,d\widetilde{\ch}_\fz^{\delta}\leq\ln [w]_{\ca_{1,\delta}}+ \ln K(n, \dz),\]
where $K(n, \dz)$ is as in \eqref{eq0723a}, which implies that $\ln w\in {\rm BLO}(\rn,\ch_\fz^\delta)$ and \eqref{eq-0628-4} holds true.

In what follows, we show that if $f\in {\rm {BLO}}(\rn, \ch_{\fz}^{\dz})$, then \eqref{eq-0628-5} holds true. Similarly to \eqref{eq-0628-x}, by Remark \ref{12-4}(iv) and Corollary \ref{them-0627-1}, there exists a positive constant $\gamma$ such that, for any cube $Q$ of $\rn$,
\[\frac{1}{{\ch}_\fz^{\delta}(Q)}\int_Qe^{\frac{\gamma [f(x)-\esinf_{y\in Q}f(y)]}{\|f\|_{{\rm {BLO}}(\rn, \ch_{\fz}^{\dz})}}}\,d{\ch}_\fz^{\delta}=\frac{1}{{\ch}_\fz^{\delta}(Q)}\int_Qe^{\frac{\gamma|f(x)-\esinf_{y\in Q}f(y)|}{\|f\|_{{\rm {BLO}}(\rn, \ch_{\fz}^{\dz})}}}\,d{\ch}_\fz^{\delta}\ls 1,\]
which implies
\[\frac{1}{{\ch}_\fz^{\delta}(Q)}\int_Qe^{\frac{\gamma f(x)}{\|f\|_{{\rm {BLO}}(\rn, \ch_{\fz}^{\dz})}}}\,d{\ch}_\fz^{\delta}\ls e^{\frac{\gamma \esinf_{y\in Q}f(y)}{\|f\|_{{\rm {BLO}}(\rn, \ch_{\fz}^{\dz})}}}\sim \esinf_{y\in Q}e^{\frac{\gamma f(y)}{\|f\|_{{\rm {BLO}}(\rn, \ch_{\fz}^{\dz})}}}.\]
Therefore,  \eqref{eq-0628-5} holds true. This completes the proof of Theorem \ref{tm-0720-1}.
\end{proof}

\section{Proofs of Theorems \ref{cor-0721-1} and \ref{cor-0721-2}}\label{s4}

In this section, we prove the decomposition theorems of ${\rm BMO}(\rn,\ch_\fz^\delta)$ and ${\rm BLO}(\rn,\ch_\fz^\delta)$. We begin with the following conclusion, which is of independent interest.

\begin{prop}\label{lm-0722-1}
Let $\dz\in(0, n]$. Then
\[L^{\fz}(\rn, \ch_{\fz}^{\dz})\subsetneqq \rm{BLO}(\rn, \ch_{\fz}^{\dz})\subsetneqq \rm{BMO}(\rn, \ch_{\fz}^{\dz}).\]
\end{prop}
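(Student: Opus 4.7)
The plan is to first establish the two inclusions by direct estimates and then exhibit explicit examples witnessing strictness, built from truncated logarithmic functions.

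For $L^{\fz}(\rn, \ch_{\fz}^{\dz})\subseteq {{\rm {BLO}}}(\rn, \ch_{\fz}^{\dz})$, I would note that $|f(x)-\esinf_{x\in Q} f|\le 2\|f\|_{L^\fz(\rn,\ch_\fz^\dz)}$ for $\ch_\fz^\dz$-almost every $x\in Q$, so $\|f\|_{{{\rm {BLO}}}(\rn,\ch_\fz^\dz)}\le 2\|f\|_{L^\fz(\rn,\ch_\fz^\dz)}$. For ${{\rm {BLO}}}(\rn,\ch_\fz^\dz)\subseteq{{\rm {BMO}}}(\rn,\ch_\fz^\dz)$, using $c:=\esinf_{x\in Q} f$ as an admissible choice in the infimum defining the BMO norm immediately yields $\|f\|_{{{\rm {BMO}}}(\rn,\ch_\fz^\dz)}\le\|f\|_{{{\rm {BLO}}}(\rn,\ch_\fz^\dz)}$.

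For the strict inclusion $L^\fz(\rn,\ch_\fz^\dz)\subsetneq{{\rm {BLO}}}(\rn,\ch_\fz^\dz)$, I would take $f(x):=\log^+(1/|x|)$, which is unbounded at the origin. To verify $f\in {{\rm {BLO}}}(\rn,\ch_\fz^\dz)$, I would apply the layer-cake identity together with the capacity estimate $\ch_\fz^\dz(\{x\in Q:|x|<r\})\ls r^\dz$; for a cube $Q$ of sufficiently small side length $l$ centered at the origin, the essential infimum is $\log(2/(\sqrt{n}l))$, so
$$\int_Q[f(x)-\esinf_{x\in Q}f]\,d\ch_\fz^\dz
=\int_0^\fz\ch_\fz^\dz\lf(\lf\{x\in Q:|x|<\frac{\sqrt{n}l}{2}e^{-t}\r\}\r)\,dt
\ls\int_0^\fz l^\dz e^{-\dz t}\,dt\sim\ch_\fz^\dz(Q),$$
with routine adjustments for cubes that are off-center or do not intersect the origin. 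A capacity-free alternative is to invoke Theorem \ref{cor-0721-2} and realize $f$, modulo an $L^\fz$ perturbation, as $\log\cm_{\ch_\fz^\dz}g$ for a suitably chosen $g\in L^1_{\loc}(\rn,\ch_\fz^\dz)$.

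For the strict inclusion ${{\rm {BLO}}}(\rn,\ch_\fz^\dz)\subsetneq{{\rm {BMO}}}(\rn,\ch_\fz^\dz)$, I would consider $g(x):=\log\min\{|x|,1\}$. On any cube $Q$ containing the origin, $\esinf_{x\in Q}g=-\fz$, so the BLO integral diverges and $g\notin{{\rm {BLO}}}(\rn,\ch_\fz^\dz)$. However, selecting a scale-adapted centering constant $c_Q\sim\log l$ (where $l$ is the side of $Q$) and rerunning the same layer-cake computation gives $\frac{1}{\ch_\fz^\dz(Q)}\int_Q|g(x)-c_Q|\,d\ch_\fz^\dz\ls 1$ uniformly in $Q$, so $g\in{{\rm {BMO}}}(\rn,\ch_\fz^\dz)$. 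The main technical obstacle is the careful case analysis over positions and scales of $Q$ for the BMO estimate; all sub-cases reduce to the capacity bound $\ch_\fz^\dz(B(0,r))\sim r^\dz$ combined with the layer-cake formula.
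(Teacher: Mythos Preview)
Your proposal is correct and follows essentially the same approach as the paper: the inclusions are handled by the same direct estimates, and for strictness the paper uses the untruncated functions $f(x)=-\ln|x|$ and $g(x)=\ln|x|$ (your truncated versions differ from these only by $L^\fz$ perturbations, so the analysis is equivalent). One minor efficiency in the paper's argument worth noting: for the BMO verification of $g=\ln|x|$ it takes $c=\esup_{Q}g$, so that $|g-c|=|(-g)-\esinf_{Q}(-g)|$ and the BLO layer-cake computation for $-\ln|x|$ can be reused verbatim, avoiding a second case analysis.
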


\begin{proof}
We first prove that
\begin{align}\label{eq0722-2}
L^{\fz}(\rn, \ch_{\fz}^{\dz})\subsetneqq \rm{BLO}(\rn, \ch_{\fz}^{\dz}).
\end{align}
Let $f\in L^{\fz}(\rn, \ch_{\fz}^{\dz})$. Then we easily have, for any cube $Q$ of $\rn$,
\begin{align}\label{eq125-6}
\frac{1}{\ch_{\infty}^{\delta}(Q)}\int_{Q}
|f(x)-\esinf_{y\in Q}f(y)|\,d\ch^{\dz}_{\fz}
\leq 2\|f\|_{L^{\fz}(\rn, \ch_{\fz}^{\dz})},
\end{align}
which implies that $L^{\fz}(\rn, \ch_{\fz}^{\dz})\subset \rm{BLO}(\rn, \ch_{\fz}^{\dz})$.
For any $x\in\rn$, let
$$f(x)\coloneqq 
\begin{cases}
-\ln |x|,\quad &\text{when}\ x\neq 0,\\
 0,\quad &\text{when}\ x=0.
\end{cases}
$$
Then, obviously, $f\notin L^{\fz}(\rn, \ch_{\fz}^{\dz})$, but $f\in \rm{BLO}(\rn, \ch_{\fz}^{\dz})$.
Indeed, for any cube $Q\coloneqq Q(x_0, r)$ of $\rn$, where $x_0$ denotes the center of the cube $Q$ and $r$ represents the side length of $Q$, we have $Q\subset B(x_0, \frac{\sqrt{n}}{2}r)$ and
$$\esinf_{y\in B(x_0, \frac{\sqrt{n}}{2}r)}f(y)=-\ln \lf(|x_0|+\frac{\sqrt{n}}{2}r\r).$$
Thus, for any cube $Q$,
\begin{align}\label{eq0722-4}
&\frac{1}{\ch_{\fz}^{\dz}(Q)}\int_{Q}\lf|f(x)-\esinf_{y\in Q}f(y)\r|\,d\ch_{\fz}^{\dz}\\
&\quad\leq \frac{1}{r^{\dz}}\int_{B(x_0, \frac{\sqrt{n}}{2}r)}
\lf[\ln\lf(|x_0|+\frac{\sqrt{n}}{2}r\r)-\ln |x|\r]\,d\ch_{\fz}^{\dz}\noz\\
&\quad=\frac{1}{r^{\dz}}\int_{0}^{\fz}\ch_{\fz}^{\dz}\lf[B\lf(x_0, \frac{\sqrt{n}}{2}r\r)\cap B\lf(0, \lf(|x_0|+\frac{\sqrt{n}}{2}r\r)e^{-t}\r)\r]\,dt=:\rm{I}.\noz
\end{align}
If $|x_0|\leq \frac{3\sqrt{n}}{2}r$, then by \cite[Remark 2.3(i)]{0702-4}, we find that
\begin{align}\label{eq0722-3}
{\rm{I}}\leq \frac{1}{r^{\dz}}\int_{0}^{\fz}\ch_{\fz}^{\dz}\lf(B\lf(0, 2\sqrt{n}re^{-t}\r)\r)\,dt\ls 1.
\end{align}
If $|x_0|> \frac{3\sqrt{n}}{2}r$, then, when $t> \ln (|x_0|+\frac{\sqrt{n}}{2}r)-\ln (|x_0|-\frac{\sqrt{n}}{2}r)$,
$B(x_0, \frac{\sqrt{n}}{2}r)\cap B(0, (|x_0|+\frac{\sqrt{n}}{2}r)e^{-t})=\emptyset$,
which, together with \cite[Remark 2.3(i)]{0702-4} again, further implies that
\begin{align}\label{eq0722-5}
{\rm{I}}&\le \frac{1}{r^{\dz}}\int_{0}^{\ln (|x_0|+\frac{\sqrt{n}}{2}r)-\ln (|x_0|-\frac{\sqrt{n}}{2}r)}\ch_{\fz}^{\dz}\lf(B\lf(x_0, \frac{\sqrt{n}}{2}r\r)\r)\,dt\\
&\le \frac{1}{r^{\dz}}\int_{0}^{\ln 2}\ch_{\fz}^{\dz}\lf(B\lf(x_0, \frac{\sqrt{n}}{2}r\r)\r)\,dt\ls 1.\nonumber
\end{align}
From this, \eqref{eq0722-4} and \eqref{eq0722-3}, we deduce that $f\in \rm{BLO}(\rn, \ch_{\fz}^{\dz})$ and
hence \eqref{eq0722-2} holds true.

Next, we show that
\begin{align}\label{eq0722-6}
\rm{BLO}(\rn, \ch_{\fz}^{\dz})\subsetneqq \rm{BMO}(\rn, \ch_{\fz}^{\dz}).
\end{align}
Obviously, $\rm{BLO}(\rn, \ch_{\fz}^{\dz})\subset \rm{BMO}(\rn, \ch_{\fz}^{\dz})$.
Let, for any $x\in\rn$,
$$g(x)\coloneqq 
\begin{cases}
\ln |x|,\quad &\text{when}\ x\neq 0,\\
0,\quad &\text{when}\ x=0.
\end{cases}
$$
Then, for any cube $Q$ of $\rn$ containing the origin, $\esinf_{y\in Q}g(y)=-\fz$, which implies that
$g\notin \rm{BLO}(\rn, \ch_{\fz}^{\dz})$. However, $g\in \rm{BMO}(\rn, \ch_{\fz}^{\dz})$. Indeed,
for any cube $Q$ of $\rn$, by \eqref{eq0722-4}, \eqref{eq0722-3} and \eqref{eq0722-5}, we know that
\begin{align*}
\inf_{c\in \mathbb R}\frac{1}{\ch_{\fz}^{\dz}(Q)}\int_{Q}\lf|g(x)-c\r|\,d\ch_{\fz}^{\dz}
&\leq \frac{1}{\ch_{\fz}^{\dz}(Q)}\int_{Q}\lf|g(x)-\esup_{y\in Q}g(y)\r|\,d\ch_{\fz}^{\dz}\\
&=\frac{1}{\ch_{\fz}^{\dz}(Q)}\int_{Q}\lf|f(x)-\esinf_{y\in Q}f(y)\r|\,d\ch_{\fz}^{\dz}\ls 1,\noz
\end{align*}
which implies that $g\in\rm{BMO}(\rn, \ch_{\fz}^{\dz})$ and hence \eqref{eq0722-6} holds true.
This finishes the proof of Proposition \ref{lm-0722-1}.
\end{proof}

\iffalse
\begin{rem}\label{rem-0723-1}
From the geometric perspective, the relationship is reasonable. $L^{\fz}(\rn, \ch_{\fz}^{\dz})$: The amplitude is completely bounded; $\rm{BLO}(\rn, \ch_{\fz}^{\dz})$: While allowing unlimited amplitude growth, it prohibits unlimited downward oscillation, meaning the ``lower" fluctuations are controlled. In particular, the above proof shows that it is not a linear space; $\rm{BMO}(\rn, \ch_{\fz}^{\dz})$: Permits both upward and downward oscillations, but maintains overall control of the fluctuations.
\end{rem}
\fi

To show Theorems \ref{cor-0721-1} and \ref{cor-0721-2}, we first show the following lemma.

\begin{lem}\label{lm0721-1}
Let $\dz\in (0, n]$, $p,q\in[1,\fz)$ and $r\in (0, q)$. If $T$ is a bounded operator from
$L^{p}(\rn,\ch_\fz^\delta)$ to $L^{q,\infty}(\rn,\ch_\fz^\delta)$, i.e.,
there exists a positive constant $C$ such that, for any $\lz\in(0,\fz)$
and $f\in L^{p}(\rn,\ch_\fz^\delta)$,
\begin{align}\label{eq-0721-1}
\lz \lf[\ch_{\fz}^{\dz}\lf(\lf\{x\in\rn:\ |Tf(x)|>\lambda\r\}\r)\r]^\frac1q
\leq C\|f\|_{L^p(\rn, \ch_{\fz}^{\dz})}.
\end{align}
Then, for any subset $E\subset \rn$ with $\ch_{\fz}^{\dz}(E)< \fz$, we have
\begin{align}\label{eq-0721-2}
\lf(\int_E|Tf(x)|^r\,d\ch_{\fz}^{\dz}\r)^{\frac{1}{r}}\leq 2C\lf(\frac{q}{q-r}\r)^{\frac{1}{q}}\ch_{\fz}^{\dz}(E)^{\frac{1}{r}-\frac{1}{q}}\|f\|_{L^{p}(\rn,\ch_\fz^\delta)}.
\end{align}
\end{lem}

\begin{proof}
Without loss of generality, we may assume $\|f\|_{L^{p}(\rn,\ch_\fz^\delta)}\neq0$. For any $M>0$, by \eqref{eq-0721-1},
we find that
\begin{align*}
\int_E|Tf(x)|^r\,d\ch_{\fz}^{\dz}
&=r\int_0^{\fz}\lambda^{r-1}\ch_{\fz}^{\dz}\lf(\lf\{x\in E:\ |Tf(x)|>\lambda\r\}\r)\,d\lambda\\
&\leq r\int_0^{M}\lambda^{r-1}\ch_{\fz}^{\dz}\lf(E\r)\,d\lambda+r\int_M^{\fz}\lambda^{r-1}\lf(\frac{C}{\lambda}\|f\|_{L^p(\rn, \ch_{\fz}^{\dz})}\r)^{q}\,d\lambda\\
&\leq
M^r\ch_{\fz}^{\dz}\lf(E\r)+\frac{rC^q}{q-r}\|f\|^q_{L^{p}(\rn,\ch_\fz^\delta)}M^{r-q}.
\end{align*}
By setting $M\coloneqq C(\frac{r}{q-r})^{1/q}\ch_{\fz}^{\dz}\lf(E\r)^{-\frac{1}{q}}\|f\|_{L^{p}(\rn,\ch_\fz^\delta)}$,
we immediately obtain \eqref{eq-0721-2}. This finishes the proof of Lemma \ref{lm0721-1}.
\end{proof}

\begin{rem}\label{rem-0725}
Let $\delta\in(0,n]$ and $r\in (0,p)$. Then, by \cite[Theorem 1.2]{COS2024}, we find that the operator $\cm_{\ch_\fz^\delta}$
satisfies the assumptions of Lemma \ref{lm0721-1} $p=q\in[1,\fz)$. Therefore, for any subset $E\subset \rn$ with $\ch_{\fz}^{\dz}(E)< \fz$ and $f\in L^{p}(\rn,\ch_\fz^\delta)$, we have
\begin{align*}
\lf(\int_E|\cm_{\ch_\fz^\delta}f(x)|^r\,d\ch_{\fz}^{\dz}\r)^{\frac{1}{r}}\ls \lf(\frac{p}{p-r}\r)^{\frac{1}{p}}\ch_{\fz}^{\dz}(E)^{\frac{1}{r}-\frac{1}{p}}\|f\|_{L^{p}(\rn,\ch_\fz^\delta)}.
\end{align*}
\end{rem}

Applying Lemma \ref{lm0721-1}, we establish the following result.

\begin{lem}\label{lm0721-5}
Let $\delta\in(0,n]$.
\begin{enumerate}
\item[{\rm(i)}]
Let $f\in L^1_{\loc}(\rn,\ch_{\fz}^{\dz})$. If $\cm_{\ch_{\fz}^{\dz}}f(x)<\fz$ for $\ch_\fz^\delta$-almost every $x\in\rn$, then, for any $\alpha\in(0,1)$,
$(\cm_{\ch_{\fz}^{\dz}}f)^{\alpha}\in \ca_{1,\dz}$ and there exists a positive constant $C$ that depends only on $n$ and $\dz$, such that 
\begin{align}\label{eq-26-16}
\lf[\lf(\cm_{\ch_{\fz}^{\dz}}f\r)^{\alpha}\r]_{\ca_{1,\dz}}\le C\lf[\lf(\frac{1}{1-\alpha}\r)^{\alpha}+1\r].
\end{align}
\item[{\rm(ii)}]
Let $w\in \ca_{1,\dz}$. Then there exist a function
$f\in L^1_{\loc}(\rn, \ch_{\fz}^{\dz})$ and a  function $b$ such that
\begin{enumerate}
\item[{\rm(A)}] for $\ch_\fz^\delta$-almost every $x\in\rn$, $\cm_{\ch_{\fz}^{\dz}}f(x)<\fz$;
\item[{\rm(B)}] for $\ch_\fz^\delta$-almost every $x\in\rn$, $0<C_1\le b(x)\le C_2$ with constants $C_1$ and $C_2$;
\item[{\rm(C)}] for some $\alpha\in (0,1)$ depending only on $n$, $\dz$ and $[w]_{\ca_{1,\dz}}$, $w=b(\cm_{\ch_{\fz}^{\dz}}f)^{\alpha}$.
\end{enumerate}
Moreover, there exists a positive constant $C=C(n, \dz, [w]_{\ca_{1,\dz}})$ such that
\[\|\ln b\|_{L^{\fz}(\rn,\ch_{\fz}^{\dz})}\leq C.\]
\end{enumerate}

\end{lem}

\begin{proof}
We first prove (i). It is enough to show that, for any cube $Q_0$,
\begin{align}\label{eq-26-16-1}
\frac{1}{\ch_{\fz}^{\dz}\lf(Q_0\r)}\int_{Q_0}\lf[\cm_{\ch_{\fz}^{\dz}}f(y)\r]^{\alpha}\,d\ch_{\fz}^{\dz}\le C\lf[\cm_{\ch_{\fz}^{\dz}}f(x)\r]^{\alpha}~\text{for}~\ch_{\fz}^{\dz}-\text{almost~every}~x\in Q_0,
\end{align}
where $C$ is a positive constant independent of $Q_0$ and $x$. To this end, for any given cube $Q_0$, for any $x\in Q_0$, let
\[E_{x,1}\coloneqq \{\ {\rm cube}\  Q:\ \ch_{\fz}^{\dz}\lf(Q\r)\le \ch_{\fz}^{\dz}\lf(2Q_0\r)\ {\rm and} \ Q\ni x\}\]
and
\[E_{x,2}\coloneqq \{\ {\rm cube}\  Q:\ \ch_{\fz}^{\dz}\lf(Q\r)> \ch_{\fz}^{\dz}\lf(2Q_0\r)\ {\rm and} \ Q\ni x\}.\]
Clearly, for any $x\in Q_0$,
\begin{align*}
\cm_{\ch_{\fz}^{\dz}}f(x)\leq \sup_{Q\in E_{x,1}}\frac{1}{\ch_{\fz}^{\dz}\lf(Q\r)}\int_{Q}|f(y)|\,d\ch_{\fz}^{\dz}
+\sup_{Q\in E_{x,2}}\frac{1}{\ch_{\fz}^{\dz}\lf(Q\r)}\int_{Q}|f(y)|\,d\ch_{\fz}^{\dz}=:F_1(x)+F_2(x).
\end{align*}
Since, for any cube $Q\in E_{x,1}$, $Q\subset 5Q_0$, it follows that
\begin{equation}\label{eq0721-5}
F_1(x)=\sup_{Q\in E_{x,1}}\frac{1}{\ch_{\fz}^{\dz}\lf(Q\r)}\int_{Q}|f(y)|\mathbf{1}_{5Q_0}(y)\,d\ch_{\fz}^{\dz}\leq \cm_{\ch_{\fz}^{\dz}}(f\mathbf{1}_{5Q_0})(x).
\end{equation}
For $F_2(x)$, notice that, for any cube $Q\in E_{x,2}$, we have $Q_0\subset 4Q$. Thus,
\begin{align*}
\frac{1}{\ch_{\fz}^{\dz}\lf(Q\r)}\int_{Q}|f(y)|\,d\ch_{\fz}^{\dz}=
\frac{4^{\dz}}{\ch_{\fz}^{\dz}\lf(4Q\r)}\int_{Q}|f(y)|\,d\ch_{\fz}^{\dz}\leq 4^{\dz}\inf_{x\in 4Q}\cm_{\ch_{\fz}^{\dz}}f(x)\leq 4^{\dz}\inf_{x\in Q_0}\cm_{\ch_{\fz}^{\dz}}f(x),
\end{align*}
which implies that
\begin{align*}
F_2(x)\leq 4^{\dz}\inf_{x\in Q_0}\cm_{\ch_{\fz}^{\dz}}f(x).
\end{align*}
By this, the fact $\alpha\in(0,1)$, \eqref{eq0721-5} and Remark \ref{rem-0725} with $E=Q_0$, $r=\alpha$ and $p=1$, we conclude that, for any $x\in Q_0$,
\begin{align*}
&\frac{1}{\ch_{\fz}^{\dz}\lf(Q_0\r)}\int_{Q_0}\lf[\cm_{\ch_{\fz}^{\dz}}f(y)\r]^{\alpha}\,d\ch_{\fz}^{\dz}\\
&\quad \ls \frac{1}{\ch_{\fz}^{\dz}\lf(Q_0\r)}\int_{Q_0}\lf[\cm_{\ch_{\fz}^{\dz}}(f\mathbf{1}_{5Q_0})(y)\r]^{\alpha}\,d\ch_{\fz}^{\dz}
+\frac{1}{\ch_{\fz}^{\dz}\lf(Q_0\r)}\int_{Q_0}\lf[\inf_{y\in Q_0}\cm_{\ch_{\fz}^{\dz}}f(y)\r]^{\alpha}\,d\ch_{\fz}^{\dz}\\
&\quad \ls \lf(\frac{1}{1-\alpha}\r)^{\alpha}\lf(\frac{1}{\ch_{\fz}^{\dz}\lf(Q_0\r)}\int_{5Q_0}|f(y)|\,d\ch_{\fz}^{\dz}\r)^{\alpha}+\lf[\inf_{y\in Q_0}\cm_{\ch_{\fz}^{\dz}}f(y)\r]^{\alpha}\\
&\quad\ls \lf[\lf(\frac{1}{1-\alpha}\r)^{\alpha}+1\r]\lf[\inf_{y\in Q_0}\cm_{\ch_{\fz}^{\dz}}f(y)\r]^{\alpha}\ls \lf[\lf(\frac{1}{1-\alpha}\r)^{\alpha}+1\r]\lf[\cm_{\ch_{\fz}^{\dz}}f(x)\r]^{\alpha},
\end{align*}
which implies \eqref{eq-26-16-1}
and hence $(\cm_{\ch_{\fz}^{\dz}}f)^\az \in \ca_{1, \dz}$ and its $\ca_{1, \dz}$ constant satisfies \eqref{eq-26-16}.

For (ii), according to Lemma \ref{lm0704-1}(vi), there exists a constant $\gamma\in(0,1)$ such that $w^{\gamma+1}\in\ca_{1,\dz}$. Thus, for $\ch_\fz^\delta$-almost every $x\in \rn$,
$\cm_{\ch_{\fz}^{\dz}}(w^{\gamma+1})(x)<\fz$.
Let $\alpha\coloneqq \frac{1}{1+\gamma}$ and, for any $x\in \rn$,
 $$f(x)\coloneqq w(x)^{1/\az},\ b(x)\coloneqq w(x)[\cm_{\ch_{\fz}^{\dz}}(w^{1/\az})(x)]^{-\az}.$$
Then $w=b[\cm_{\ch_{\fz}^{\dz}}(f)]^\az$, and hence (A) and (C) are proved.
Moreover, by \cite[Corollary 4.4]{0702-4}, \eqref{eq0723a} and H\"older's inequality, we conclude that,
for $\ch_\fz^\delta$-almost every $x\in\rn$,
\[w(x)\ls \cm_{\ch_{\fz}^{\dz}}w(x)\ls [\cm_{\ch_{\fz}^{\dz}}(w^{1+\gamma})(x)]^{\frac{1}{1+\gamma}},\]
which implies that $b(x)\ls 1$. Additionally, by $w^{\gamma+1}\in\ca_{1,\dz}$, we have
\[[\cm_{\ch_{\fz}^{\dz}}(w^{1+\gamma})(x)]^{\frac{1}{1+\gamma}}\ls w(x),\]
which implies that $b(x)\gs 1$. Therefore, (B) holds true and $\ln b\in L^{\fz}({\rn,\ch_{\fz}^{\dz}})$.
This finishes the proof of Lemma \ref{lm0721-5}.
\end{proof}

\iffalse
From Lemma \ref{lm0721-5} and Theorems \ref{tm-0628-1} and \ref{tm-0720-1}, we infer the following corollary. 

\begin{cor}
Let $\delta\in(0,n]$ and $f\in L^1_{\loc}(\rn,\ch_{\fz}^{\dz})$. If $\cm_{\ch_{\fz}^{\dz}}f(x)<\fz$ for $\ch_\fz^\delta$-almost every $x\in\rn$, then
$\ln \cm_{\ch_{\fz}^{\dz}}f$ belongs to ${{\rm {BMO}}}(\rn, \ch_{\fz}^{\dz})$ and ${{\rm {BLO}}}(\rn, \ch_{\fz}^{\dz})$ there exists a positive constant $C$ depending only on $n$ and $\delta$ such that $\|\ln \cm_{\ch_{\fz}^{\dz}}f\|_{{{\rm {BMO}}}(\rn, \ch_{\fz}^{\dz})}\le C$.
\end{cor}

\begin{rem}\label{rem-0705-3}
According to \cite[Proposition 2.2]{OoPh22}, a local differential theorem with respect of $\widetilde{\ch}_\fz^{\delta}$
holds true: Let $\dz\in(0,n]$, $P$ be a cube of $\rn$ and $f\in L^1(P, \widetilde{\ch}_\fz^{\delta})$.
 If $f$ is non-negative and $\ch_{\fz}^{\dz}$-quasicontinuous, then,
 for $\widetilde{\ch}_\fz^{\delta}$-almost every $x\in P$,
\begin{equation}\label{eq-0626-4}
\lim_{Q\to x, Q\subset P}\frac{1}{\widetilde{\ch}_\fz^{\delta}(Q)}\int_{Q}f(x)\,d\widetilde{\ch}_\fz^{\delta}=f(x).
\end{equation}
\end{rem}

\fi

By the definition of $\rm{BMO}(\rn,\ch_{\fz}^{\dz})$, we immediately obtain the following properties.

\begin{lem}\label{rem125-1}
Let $\dz\in (0,n]$.
\begin{enumerate}
\item[{\rm(i)}]
If $f\in {\rm{BMO}}(\rn,\ch_{\fz}^{\dz})$. Then, for any $h\in \mathbb R$,  $hf\in {\rm{BMO}}(\rn,\ch_{\fz}^{\dz})$ and
\[\|hf\|_{{\rm{BMO}}(\rn,\ch_{\fz}^{\dz})}=|h|\|f\|_{{\rm{BMO}}(\rn,\ch_{\fz}^{\dz})}.\]
\item[{\rm(ii)}]
If $f, g\in {\rm{BMO}}(\rn,\ch_{\fz}^{\dz})$. Then, $f+g\in {\rm{BMO}}(\rn,\ch_{\fz}^{\dz})$ and
\[\|f+g\|_{{\rm{BMO}}(\rn,\ch_{\fz}^{\dz})}\leq 2\lf(\|f\|_{{\rm{BMO}}(\rn,\ch_{\fz}^{\dz})}+\|g\|_{{\rm{BMO}}(\rn,\ch_{\fz}^{\dz})}\r).\]
\end{enumerate}
\end{lem}

We now prove Theorem \ref{cor-0721-1}.

\begin{proof}[Proof of Theorem \ref{cor-0721-1}]
(i) For $f\in {\rm{BMO}}(\rn,\ch_{\fz}^{\dz})$, then Theorem \ref{tm-0628-1}(ii) yields 
\[e^{\frac{c_1 f}{\|f\|_{{\rm{BMO}}(\rn,\ch_{\fz}^{\dz})}}}\in\ca_{2,\dz}\quad \text{and}\quad \lf[e^{\frac{c_1 f}{\|f\|_{{\rm{BMO}}(\rn,\ch_{\fz}^{\dz})}}}\r]_{\ca_{2,\dz}}\leq c_2,\]
where $c_1$ and $c_2$ are positive constants depending only $n$ and $\dz$.
Subsequently, by Jones' factorization theorem established in \cite[Theorem 1.10]{0702-4}, there exist two weights $w_1, w_2\in\ca_{1,\dz}$ such that
\begin{align}\label{eq124-a}
e^{\frac{c_1 f}{\|f\|_{{\rm{BMO}}(\rn,\ch_{\fz}^{\dz})}}}=w_1w_2^{-1},
\end{align}
where the $\ca_{1,\dz}$ constants $[w_1]_{\ca_{1,\dz}}$ and $[w_2]_{\ca_{1,\dz}}$ are bounded above by a constant depending only on $n$ and $\dz$. Applying Lemma \ref{lm0721-5}(ii) to $w_1$, we obtain functions $g_1\in L^{1}_{\loc}(\rn,\ch_{\fz}^{\dz})$ and $b_1$ satisfying $\ln b_1\in L^{\fz}(\rn,\ch_{\fz}^{\dz})$, and $\alpha_1\in(0,1)$
depending only on $n$ and $\dz$ such that
\begin{align}\label{eq124q}
w_1=b_1\lf(\cm_{\ch_{\fz}^{\dz}}g_1\r)^{\alpha_1}.
\end{align}
Similarly, using Lemma \ref{lm0721-5}(ii) to $w_2$, there exist functions $g_2\in L^{1}_{\loc}(\rn,\ch_{\fz}^{\dz})$ and $b_2$ satisfying $\ln b_2\in L^{\fz}(\rn,\ch_{\fz}^{\dz})$, and $\alpha_2\in(0,1)$ depending only on $n$ and $\dz$ such that
\begin{align}\label{eq124w}
w_2=b_2\lf(\cm_{\ch_{\fz}^{\dz}}g_2\r)^{\alpha_2}.
\end{align}
Combining \eqref{eq124-a}, \eqref{eq124q} and \eqref{eq124w}, we immediately deduce
\[f=\frac{\|f\|_{{\rm{BMO}}(\rn,\ch_{\fz}^{\dz})}}{c_1}\lf[\alpha_1\ln \cm_{\ch_{\fz}^{\dz}}g_1-\alpha_2\ln \cm_{\ch_{\fz}^{\dz}}g_2+\ln b_1-\ln b_2\r].\]
Now set
\[\alpha=\frac{\alpha_1}{c_1}\|f\|_{{\rm{BMO}}(\rn,\ch_{\fz}^{\dz})},\quad \beta=\frac{\alpha_2}{c_1}\|f\|_{{\rm{BMO}}(\rn,\ch_{\fz}^{\dz})},\quad b=\frac{\ln b_1-\ln b_2}{c_1}\|f\|_{{\rm{BMO}}(\rn,\ch_{\fz}^{\dz})}.\]
Then
\[f=\alpha\ln \cm_{\ch_{\fz}^{\dz}}g_1-\beta\ln \cm_{\ch_{\fz}^{\dz}}g_2+b,\]
and one has the estimate
\[\alpha+\beta+\|b\|_{L^{\fz}(\rn,\ch_{\fz}^{\dz})}\leq \frac{1}{c_1}\lf(\alpha_1+\alpha_2+\|\ln b_1\|_{L^{\fz}(\rn,\ch_{\fz}^{\dz})}+\|\ln b_2\|_{L^{\fz}(\rn,\ch_{\fz}^{\dz})}\r)\|f\|_{{\rm{BMO}}(\rn,\ch_{\fz}^{\dz})}.\]

(ii) Define
\[h=2\lf(\alpha+\beta+\|b\|_{L^{\fz}(\rn,\ch_{\fz}^{\dz})}\r).\]
Then $\frac{\alpha}{h}\leq \frac12$ and $\frac{\beta}{h}\leq \frac12$. Moreover,
\begin{align}\label{eq125-1}
\lf\|\frac{b}{h}\r\|_{{\rm{BMO}}(\rn, \ch_{\fz}^{\dz})}=\frac{1}{h}\|b\|_{{\rm{BMO}}(\rn, \ch_{\fz}^{\dz})}\leq \frac{1}{h}\|b\|_{L^{\fz}(\rn,\ch_{\fz}^{\dz})}\leq \frac{1}{2}.
\end{align}
By Lemma \ref{lm0721-5}(i), we know that both $(\cm_{\ch_{\fz}^{\dz}}g_1)^{\frac{\alpha}{h}}$ and $(\cm_{\ch_{\fz}^{\dz}}g_2)^{\frac{\beta}{h}}$ belong to $\ca_{1,\dz}$, and their $\ca_{1,\dz}$ constants satisfy \eqref{eq-26-16} (admit uniform upper bound depending only $n$ and $\dz$). Hence, \cite[Theorem 1.10]{0702-4} implies that
\[\lf(\cm_{\ch_{\fz}^{\dz}}g_1\r)^{\frac{\alpha}{h}}\lf(\cm_{\ch_{\fz}^{\dz}}g_2\r)^{-\frac{\beta}{h}}\in\ca_{2,\dz},\]
with the upper bound of $\ca_{2,\dz}$ constant depending only on $n$ and $\dz$. Applying Theorem \ref{tm-0628-1}(i), we obtain $\frac{\alpha}{h}\ln(\cm_{\ch_{\fz}^{\dz}}g_1)-\frac{\beta}{h}\ln(\cm_{\ch_{\fz}^{\dz}}g_2)\in{\rm BMO}(\rn,\ch_{\fz}^{\dz})$ and there exists a constant $C=C(n,\dz)$ such that
\[\lf\|\frac{\alpha}{h}\ln(\cm_{\ch_{\fz}^{\dz}}g_1)-\frac{\beta}{h}\ln(\cm_{\ch_{\fz}^{\dz}}g_2)\r\|_{{\rm BMO}(\rn,\ch_{\fz}^{\dz})}\leq C.\]
By this, \eqref{eq125-1} and Lemma \ref{rem125-1}, we conclude that $f\in {\rm{BMO}}(\rn,\ch_{\fz}^{\dz})$ and
\begin{align*}
\|f\|_{{\rm{BMO}}(\rn,\ch_{\fz}^{\dz})}&=h\lf\|\frac{\alpha}{h}\ln(\cm_{\ch_{\fz}^{\dz}}g_1)-\frac{\beta}{h}\ln(\cm_{\ch_{\fz}^{\dz}}g_2)+\frac{b}{h}\r\|_{{\rm BMO}(\rn,\ch_{\fz}^{\dz})}\\
&\leq (4C+2)\lf(\alpha+\beta+\|b\|_{L^{\fz}(\rn,\ch_{\fz}^{\dz})}\r),
\end{align*}
which completes the proof of Theorem \ref{cor-0721-1}.
\end{proof}

To show Theorem \ref{cor-0721-2}, we also need the following lemma.

\begin{lem}\label{rem125-2}
Let $\dz\in (0,n]$.
\begin{enumerate}
\item[{\rm(i)}]
If $f\in {\rm{BLO}}(\rn,\ch_{\fz}^{\dz})$. Then, for any $h\in [0,\fz)$, we have, $hf\in {\rm{BLO}}(\rn,\ch_{\fz}^{\dz})$ and
\begin{align}\label{eq125-2}
\|hf\|_{{\rm{BLO}}(\rn,\ch_{\fz}^{\dz})}=h\|f\|_{{\rm{BLO}}(\rn,\ch_{\fz}^{\dz})}.
\end{align}
However, when $h<0$, $hf$ does not necessarily belong to ${\rm{BLO}}(\rn,\ch_{\fz}^{\dz})$.
\item[{\rm(ii)}]
If $f, g\in {\rm{BLO}}(\rn,\ch_{\fz}^{\dz})$. Then, $f+g\in {\rm{BLO}}(\rn,\ch_{\fz}^{\dz})$ and
\begin{align}\label{eq125-3}
\|f+g\|_{{\rm{BLO}}(\rn,\ch_{\fz}^{\dz})}\leq 2\lf(\|f\|_{{\rm{BLO}}(\rn,\ch_{\fz}^{\dz})}+\|g\|_{{\rm{BLO}}(\rn,\ch_{\fz}^{\dz})}\r).
\end{align}
However, $f-g$ does not necessarily belong to ${\rm{BLO}}(\rn,\ch_{\fz}^{\dz})$.
\item[{\rm(iii)}]
If $f\in {\rm{BLO}}(\rn,\ch_{\fz}^{\dz})$ and $g\in L^{\fz}(\rn,\ch_{\fz}^{\dz})$. Then, for any $h\in\mathbb R$, we have $f+hg\in {\rm{BLO}}(\rn,\ch_{\fz}^{\dz})$ and
\[\|f+hg\|_{{\rm{BLO}}(\rn,\ch_{\fz}^{\dz})}\leq 2\lf(\|f\|_{{\rm{BLO}}(\rn,\ch_{\fz}^{\dz})}+2|h|\|g\|_{L^{\fz}(\rn,\ch_{\fz}^{\dz})}\r).\]
\end{enumerate}
\end{lem}

\begin{proof}
(i)
For any cube $Q$ of $\rn$, it is not difficult to see that
\[\esinf_{y\in Q}(hf)(y)=h\esinf_{y\in Q}f(y).\]
Therefore, from the definition of ${\rm{BLO}}(\rn, \ch_{\fz}^{\dz})$, we immediately obtain \eqref{eq125-2}. Furthermore, from the proof of Proposition \ref{lm-0722-1}, we find that $f\in {\rm{BLO}}(\rn, \ch_{\fz}^{\dz})$. However, $g=-f\notin {\rm{BLO}}(\rn, \ch_{\fz}^{\dz})$.

(ii) For any cube $Q$ of $\rn$, using this and \eqref{eq125-0}, we have
\begin{align*}
&\frac{1}{\ch_{\infty}^{\delta}(Q)}\int_{Q}
|f(x)+g(x)-\esinf_{y\in Q}[f(y)+g(y)]|\,d\ch^{\dz}_{\fz}\\
&\quad\leq \frac{1}{\ch_{\infty}^{\delta}(Q)}\int_{Q}
|f(x)+g(x)-\esinf_{y\in Q}f(y)-\esinf_{y\in Q}g(y)|\,d\ch^{\dz}_{\fz}\\
&\quad\leq \frac{2}{\ch_{\infty}^{\delta}(Q)}\int_{Q}
|f(x)-\esinf_{y\in Q}f(y)|\,d\ch^{\dz}_{\fz}+\frac{2}{\ch_{\infty}^{\delta}(Q)}\int_{Q}
|g(x)-\esinf_{y\in Q}g(y)|\,d\ch^{\dz}_{\fz},
\end{align*}
which implies that \eqref{eq125-3} holds and $f+g\in {\rm{BLO}}(\rn,\ch_{\fz}^{\dz})$. Additionally, if we choose $f\equiv 0$, then when $g\in {\rm{BLO}}(\rn,\ch_{\fz}^{\dz})$, by (i), we know that $f-g$ does not necessarily belong to ${\rm{BLO}}(\rn,\ch_{\fz}^{\dz})$.

(iii) By \eqref{eq125-6}, we immediately have that $hg\in {\rm{BLO}}(\rn,\ch_{\fz}^{\dz})$ and
\[\|hg\|_{{\rm{BLO}}(\rn,\ch_{\fz}^{\dz})}\leq 2|h|\|g\|_{L^{\fz}(\rn, \ch_{\fz}^{\dz})}.\]
Then, using this and (ii), we immediately obtain (iii).
\end{proof}

\begin{proof}[Proof of Theorem \ref{cor-0721-2}]
By Theorem \ref{tm-0720-1}, Lemmas \ref{lm0721-5} and \ref{rem125-2}, and an argument similar to that used in the proof of  Theorem \ref{cor-0721-1}, we obtain Theorem \ref{cor-0721-2}; the details are omitted.
\end{proof}

\section{Proofs of Theorems \ref{them-0629-1} and \ref{them-127-2}}\label{s5}

We first prove Theorem \ref{them-0629-1}.

\begin{proof}[Proof of Theorem \ref{them-0629-1}]
We complete this proof by two steps.

\emph{\textbf{Step 1)}} We prove that, for any $w\in\ca_{p,\dz}$ with $p\in[1,\fz)$,
\begin{equation}\label{eq-0629-2}
\|f\|_{{\rm {BMO}}(\rn, \ch_{\fz}^{\dz})}\sim\|f\|_{{{\rm {BMO}}}_{w}^p(\rn, \ch_{\fz}^{\dz})}.
\end{equation}
To this end, we first show that, for any $f\in {\rm BMO}(\rn, \ch_{\fz}^{\dz})$,
\begin{equation}\label{eq-0629-3}
\|f\|_{{{\rm {BMO}}}_{w}^p(\rn, \ch_{\fz}^{\dz})}\ls \|f\|_{{\rm {BMO}}(\rn, \ch_{\fz}^{\dz})}.
\end{equation}
Since $f\in {\rm {BMO}}(\rn, \ch_{\fz}^{\dz})$, according to Corollary \ref{lm-0522-17},
there exists positive constants $c, C>0$ such that for any $t>0$ and any cube $Q$ of $\rn$,
\[\ch_{\fz}^{\dz}\lf(\lf\{x\in Q: |f(x)-C_Q|>t\r\}\r)\le C\ch_{\fz}^{\dz}(Q)e^{-\frac{ct}{\|f\|_{{\rm {BMO}}(\rn, \ch_{\fz}^{\dz})}}},\]
where $C_Q$ is as in Corollary \ref{lm-0522-17}. Thus, for any $w\in\ca_{p,\dz}$, by Lemma \ref{lm0704-1}(v), there exists a positive constant $\varepsilon$ such that, for any cube $Q$ of $\rn$,
\[w_{\ch_{\fz}^{\dz}}\lf(\lf\{x\in Q: |f(x)-C_Q|>t\r\}\r)\ls w_{\ch_{\fz}^{\dz}}(Q)e^{-\frac{c\varepsilon t}{\|f\|_{{\rm {BMO}}(\rn, \ch_{\fz}^{\dz})}}}.\]
From this and Lemma \ref{lm-0919-4}, we deduce that, for any cube $Q$ of $\rn$,
\begin{align*}
\int_Q|f(x)-C_Q|^pw(x)\,d\ch_{\fz}^{\dz}
&\le 4p\int_0^{\fz}t^{p-1}w_{\ch_{\fz}^{\dz}}\lf(\lf\{x\in Q: |f(x)-C_Q|>t\r\}\r)\,dt\\
&\ls w_{\ch_{\fz}^{\dz}}(Q)\int_0^{\fz}t^{p-1}e^{-\frac{c\varepsilon t}{\|f\|_{{\rm {BMO}}(\rn, \ch_{\fz}^{\dz})}}}\,dt \noz\\
&\sim w_{\ch_{\fz}^{\dz}}(Q)\|f\|^p_{{\rm {BMO}}(\rn, \ch_{\fz}^{\dz})}\int_0^{\fz}t^{p-1}e^{-c\varepsilon t}\,dt, \noz
\end{align*}
which implies \eqref{eq-0629-3}.

Next, we verify that, for any $f\in {\rm BMO}_w^p(\rn,\ch_\fz^\delta)$,
\begin{equation}\label{eq-0629-4}
\|f\|_{{{\rm {BMO}}}(\rn, \ch_{\fz}^{\dz})} \le 2[w]^{1/p}_{\ca_{p,\dz}} \|f\|_{{{\rm {BMO}}}_{w}^p(\rn, \ch_{\fz}^{\dz})}.
\end{equation}
If $p>1$, by H\"{o}lder's inequality \eqref{12e1}, we know that, for any $c\in\rr$,
\begin{align*}
\int_Q|f(x)-c|\,d\ch_{\fz}^{\dz}
\le 2\lf(\int_Q|f(x)-c|^pw(x)\,d\ch_{\fz}^{\dz}\r)^{\frac{1}{p}}\lf(\int_Qw(x)^{-\frac{1}{p-1}}\,d\ch_{\fz}^{\dz}\r)^{1-\frac{1}{p}},
\end{align*}
which further implies
\begin{align*}
\|f\|_{{{\rm {BMO}}}(\rn, \ch_{\fz}^{\dz})}
&\le 2\|f\|_{{{\rm {BMO}}}_{w}^p(\rn, \ch_{\fz}^{\dz})} \lf(\frac1{\ch_{\fz}^{\dz}(Q)}\int_Qw(x)\,d\ch_{\fz}^{\dz}\r)^{\frac{1}{p}}\lf(\frac1{\ch_{\fz}^{\dz}(Q)}\int_Qw(x)^{-\frac{1}{p-1}}\,d\ch_{\fz}^{\dz}\r)^{1-\frac{1}{p}}\noz\\
&\le 2[w]^{1/p}_{\ca_{p,\dz}} \|f\|_{{{\rm {BMO}}}_{w}^p(\rn, \ch_{\fz}^{\dz})}.\noz
\end{align*}
If $p=1$, using $w\in\ca_{1,\dz}$, we then obtain, for any $c\in\rr$,
\begin{align*}
\int_Q|f(x)-c|\,d\ch_{\fz}^{\dz}
&\le\int_Q|f(x)-c|w(x)\,d\ch_{\fz}^{\dz}\|w^{-1}\|_{L^{\fz}(Q,\ch_{\fz}^{\dz})}.
\end{align*}
Hence
\begin{align*}
\|f\|_{{{\rm {BMO}}}(\rn, \ch_{\fz}^{\dz})}
\le \|f\|_{{{\rm {BMO}}}_{w}^1(\rn, \ch_{\fz}^{\dz})}\frac{w_{\ch_{\fz}^{\dz}}(Q)}{ \ch_{\fz}^{\dz}(Q)}\|w^{-1}\|_{L^{\fz}(Q,\ch_{\fz}^{\dz})}
\le [w]_{\ca_{1,\dz}}\|f\|_{{{\rm {BMO}}}_{w}^1(\rn, \ch_{\fz}^{\dz})}.\noz
\end{align*}
Therefore, \eqref{eq-0629-4} and hence \eqref{eq-0629-2} holds true.

\emph{\textbf{Step 2)}} We show that, for any $q\in(0,\fz)$ and $w\in\ca_{p,\dz}$ with $p\in[1,\fz)$,
\begin{equation}\label{eq-0629-1}
\|f\|_{{{\rm {BMO}}}_{w}^q(\rn, \ch_{\fz}^{\dz})}\sim \|f\|_{{{\rm {BMO}}}^p_w(\rn, \ch_{\fz}^{\dz})}.
\end{equation}
Let $f\in {{\rm {BMO}}}^q_w(\rn, \ch_{\fz}^{\dz})$. Then, by Proposition \ref{tm-0523-2}, there exists positive constants $c, C>0$ such that for any $t>0$ and any cube $Q$ of $\rn$
\begin{equation*}
w_{\ch_{\fz}^{\dz}}(\{x\in Q:\ |f(x)-C_{f,w,Q}|>t\})\le Cw_{\ch_{\fz}^{\dz}}(Q)e^{-\frac{ct}{\|f\|_{{{\rm {BMO}}}^q_w(\rn, \ch_{\fz}^{\dz})}}},
\end{equation*}
where
$C_{f,w,Q}$ is as in \eqref{gamma}. Therefore,
\begin{align*}
\int_Q|f(x)-C_{f,w,Q}|^pw(x)\,d\ch_{\fz}^{\dz}
&\le 4p\int_0^{\fz}t^{p-1}w_{\ch_{\fz}^{\dz}}\lf(\lf\{x\in Q:\ |f(x)-C_{f,w,Q}|>t\r\}\r)\,dt\\
&\ls w_{\ch_{\fz}^{\dz}}(Q)\int_0^{\fz}t^{p-1}e^{-\frac{ct}{\|f\|_{{{\rm {BMO}}}^q_{w}(\rn, \ch_{\fz}^{\dz})}}}\,dt \noz\\
&\sim w_{\ch_{\fz}^{\dz}}(Q)\|f\|^p_{{{\rm {BMO}}}_w^q(\rn, \ch_{\fz}^{\dz})}\int_0^{\fz}t^{p-1}e^{-ct}\,dt, \noz
\end{align*}
which implies that $\|f\|_{{{\rm {BMO}}}_{w}^p(\rn, \ch_{\fz}^{\dz})}\ls \|f\|_{{{\rm {BMO}}}^q_w(\rn, \ch_{\fz}^{\dz})}$.
Similarly, we also have $\|f\|_{{{\rm {BMO}}}_{w}^q(\rn, \ch_{\fz}^{\dz})}\ls \|f\|_{{{\rm {BMO}}}^p_w(\rn, \ch_{\fz}^{\dz})}$,
which implies \eqref{eq-0629-1}. This completes the proof of Theorem \ref{them-0629-1}.
\end{proof}

\begin{proof}[Proof of Theorem \ref{them-127-2}]
Via Proposition \ref{tm-127-1}, similar to the argument that used in the proof of Theorem \ref{them-0629-1},
we know that, for $w\in\ca_{p,\dz}$, the weighted $\rm{BLO}$ space ${{\rm {BLO}}}^q_{w}(\rn, \ch_{\fz}^{\dz})$ coincides with  ${\rm{BLO}}(\rn, \ch_{\fz}^{\dz})$ in the sense of equivalent quasi-norms; the details are omitted.
\end{proof}

\medskip

\noindent\textbf{Acknowledgements}
~The authors would like to sincerely thank Professor Daniel Spector for his valuable suggestions and for bringing reference \cite{bcr25} to their attention. L. Huang would like to thank Professor Zipeng Wang from Chongqing University for some helpful comments.
This project is partly supported
by the Science and Technology Projects of Guangzhou (SL2024A04J00209), the National Natural
Science Foundations of China (12201139), and the Natural Science Foundation of Hunan province (2024JJ3023).

%\section*{Data availability}

%No data was used for the research described in the article.

\addcontentsline{toc}{section}{References}

\medskip

\noindent Long Huang

\medskip

\noindent School of Mathematics and Information Science,
Guangzhou University, Guangzhou, 510006, The People's Republic of China

\smallskip

\noindent {\it E-mail}: \texttt{longhuang@gzhu.edu.cn}

\medskip
\medskip

\noindent  Yangzhi Zhang and Ciqiang Zhuo (Corresponding author)
	
\smallskip
	
\noindent Key Laboratory of Computing and Stochastic Mathematics
(Ministry of Education), School of Mathematics and Statistics,
Hunan Normal University,
Changsha, Hunan 410081, The People's Republic of China
	
\smallskip

\noindent {\it E-mails}:
\texttt{yzzhang@hunnu.edu.cn}
	
\noindent\phantom{{\it E-mails:}}
\texttt{cqzhuo87@hunnu.edu.cn}

\end{document}